\newtheorem{theorem}{Theorem}[section]
\newtheorem{lemma}[theorem]{Lemma}
\newtheorem{proposition}[theorem]{Proposition}
\newtheorem{corollary}[theorem]{Corollary}
\theoremstyle{definition}
\newtheorem{definition}[theorem]{Definition}
\newtheorem{example}[theorem]{Example}
\newtheorem{setting}[theorem]{Setting}
\newtheorem{construction}[theorem]{Construction}
\newtheorem{remark}[theorem]{Remark}
\theoremstyle{remark}
\numberwithin{equation}{section}
\def\Rho{{\rm R}}
\def\div{{\rm div}}
\def\quot{/\!\!/}
\def\rq#1{\widehat{#1}}
\def\b#1{\overline{#1}}
\def\bangle#1{\langle #1 \rangle}
\def\KK{{\mathbb C}}
\def\TT{{\mathbb T}}
\def\ZZ{{\mathbb Z}}
\def\QQ{{\mathbb Q}}
\def\PP{{\mathbb P}}
\def\Mov{{\rm Mov}}
\def\trop{{\rm trop}}
\def\Aut{\operatorname{Aut}}
\def\Cl{\operatorname{Cl}}
\def\Pic{\operatorname{Pic}}
\def\Spec{{\rm Spec}}
\def\conv{{\rm conv}}
\def\cone{{\rm cone}}
\def\Aut{{\rm Aut}}
\def\rk{{\rm rk}\,}
\def\relint{{\rm relint}}
\begin{document}
\title[On terminal Fano $3$-folds with $2$-torus action]%
{On terminal Fano $3$-folds with $2$-torus action}
\author[B.~Bechtold]{Benjamin Bechtold} 
\address{Mathematisches Institut, Universit\"at T\"ubingen,
Auf der Morgenstelle 10, 72076 T\"ubingen, Germany}
\email{benjamin.bechtold@googlemail.com}
\author[J.~Hausen]{J\"urgen Hausen} 
\address{Mathematisches Institut, Universit\"at T\"ubingen,
Auf der Morgenstelle 10, 72076 T\"ubingen, Germany}
\email{juergen.hausen@uni-tuebingen.de}
\author[E.~Huggenberger]{\\Elaine Huggenberger} 
\address{Mathematisches Institut, Universit\"at T\"ubingen,
Auf der Morgenstelle 10, 72076 T\"ubingen, Germany}
\email{elaine.huggenberger@uni-tuebingen.de}
\author[M.~Nicolussi]{Michele Nicolussi} 
\address{Mathematisches Institut, Universit\"at T\"ubingen,
Auf der Morgenstelle 10, 72076 T\"ubingen, Germany}
\email{michele.nicolussi@uni-tuebingen.de}

\begin{abstract}
We classify the terminal 
$\QQ$-factorial Fano threefolds
of Picard number one that come with an 
effective action of a two-dimensional torus.
Our approach applies also to higher
dimensions and generalizes the correspondence 
between toric Fano varieties and lattice 
polytopes:
to any Fano variety with a 
complete intersection Cox ring we associate 
its ``anticanonical complex'', which is a certain
polyhedral complex living in the lattice 
of one parameter groups of an ambient toric 
variety.
For resolutions constructed via the tropical 
variety, the lattice points inside the anticanonical 
complex control the discrepancies.
This leads, for example, to simple 
characterizations of terminality and canonicity.
\end{abstract}

\subjclass[2000]{14J45, 14J30, 14L30}

\maketitle


\section{The main results}
\label{sec:intro}

This article contributes to the classification 
of Fano threefolds, that means normal projective 
algebraic varieties~$X$ of dimension three with 
an ample anticanonical divisor;
we work over the field $\KK$ of complex numbers.
Whereas the smooth Fano threefolds are well known
due to Iskovskikh~\cite{Is1,Is2} and 
Mori/Mukai~\cite{MoMu},
the singular case is still widely open.
We restrict to terminal singularities, i.e.,~the 
mildest class in the context of the minimal model 
program.
Let $\TT \subseteq \Aut(X)$ be a maximal torus.
If $\dim(\TT) = \dim(X)$ holds, then $X$ is 
a toric Fano variety and the classification
can be performed in the setting of lattice 
polytopes, see~\cite{BoBo,Ka}.
We go one step further and consider torus actions 
of complexity one, meaning that we have 
$\dim(\TT) = \dim(X)-1$.
Our approach is via the Cox ring 
$$ 
\mathcal{R}(X) \ := \ \bigoplus_{\Cl(X)} \Gamma(X,\mathcal{O}(D)),
$$
which can be associated to any normal complete 
variety $X$ with finitely generated divisor class 
group $\Cl(X)$; see~\cite[Sec.~1.4]{ArDeHaLa} for 
the details of this definition.
For a Fano variety $X$ with at most terminal 
singularities, $\Cl(X)$ is finitely 
generated~\cite[Sec.~2.1]{IsPr}.
If, in addition, $X$ comes with a torus action of 
complexity one, then $X$ is rational, 
the Cox ring $\mathcal{R}(X)$ is finitely generated,
uniquely determines $X$, and admits an explicit 
description as a complete intersection~\cite{HaHe,HaSu}.
Our main result gives a classification of the 
terminal $\QQ$-factorial threefolds of Picard 
number one by listing their Cox rings.

\begin{theorem}
\label{thm:classif}
The following table lists the Cox rings 
$\mathcal{R}(X)$ of the non-toric terminal 
$\QQ$-factorial Fano threefolds~$X$ of 
Picard number one with an effective two-torus 
action;
the $\Cl(X)$-degrees of the 
generators $T_1,\ldots,T_r$ are denoted as columns 
$w_i \in \Cl(X)$ of a matrix $[w_1,\ldots,w_r]$.
Additionally we give the selfintersection number $(-\mathcal{K}_X)^3$
for the anticanonical class $-\mathcal{K}_X\in\Cl(X)$
and the Gorenstein index $\iota(X)$, i.e.,
the smallest positive integer such that $\iota(X)\cdot\mathcal{K}_X$ 
is Cartier.

{\small
\begin{longtable}[htbp]{cccccc}
\toprule
No.
&
$\mathcal{R}(X)$
&
$\Cl(X)$
&
$[w_1,\ldots, w_r]$
&
$(-\mathcal{K}_X)^3$
&
$\iota(X)$
\\
\midrule
1
&
$\frac{\KK[T_1,\ldots,T_5]}{\bangle{T_1T_2+T_3T_4+T_5^2}}$
&
$\ZZ$
&
$
\left[
\begin{smallmatrix}
1 & 1 & 1 & 1 & 1
\end{smallmatrix}
\right]
$
&
$54$
&
$1$
\\
\midrule
2
&
$\frac{\KK[T_1,\ldots,T_5]}{\bangle{T_1T_2+T_3T_4+T_5^2}}$
&
$\ZZ$
&
$
\left[
\begin{smallmatrix}
1 & 5 & 2 & 4 & 3
\end{smallmatrix}
\right]
$
&
$729/20$
&
$20$
\\
\midrule
3
&
$\frac{\KK[T_1,\ldots,T_5]}{\bangle{T_1T_2+T_3T_4+T_5^2}}$
&
$\ZZ\oplus\ZZ/5\ZZ$
&
$
\left[
\begin{smallmatrix}
1 & 1 & 1 & 1 & 1\\
\b{2} & \b{3} & \b{1} & \b{4} & \b{0}
\end{smallmatrix}
\right]
$
&
$54/5$
&
$5$
\\
\midrule
4
&
$\frac{\KK[T_1,\ldots,T_5]}{\bangle{T_1T_2+T_3T_4+T_5^3}}$
&
$\ZZ$
&
$
\left[
\begin{smallmatrix}
1 & 5 & 3 & 3 & 2
\end{smallmatrix}
\right]
$
&
$512/15$
&
$15$
\\
\midrule
5
&
$\frac{\KK[T_1,\ldots,T_5]}{\bangle{T_1T_2+T_3T_4+T_5^4}}$
&
$\ZZ$
&
$
\left[
\begin{smallmatrix}
1 & 3 & 2 & 2 & 1
\end{smallmatrix}
\right]
$
&
$125/3$
&
$6$
\\
\midrule
6
&
$\frac{\KK[T_1,\ldots,T_5]}{\bangle{T_1T_2+T_3T_4+T_5^4}}$
&
$\ZZ\oplus\ZZ/2\ZZ$
&
$
\left[
\begin{smallmatrix}
1 & 3 & 2 & 2 & 1\\
\b{1} & \b{1} & \b{1} & \b{1} & \b{0}
\end{smallmatrix}
\right]
$
&
$125/6$
&
$12$
\\
\midrule
7
&
$\frac{\KK[T_1,\ldots,T_5]}{\bangle{T_1T_2+T_3T_4+T_5^6}}$
&
$\ZZ$
&
$
\left[
\begin{smallmatrix}
2 & 4 & 3 & 3 & 1
\end{smallmatrix}
\right]
$
&
$343/12$
&
$12$
\\
\midrule
8
&
$\frac{\KK[T_1,\ldots,T_5]}{\bangle{T_1T_2+T_3^2T_4+T_5^2}}$
&
$\ZZ$
&
$
\left[
\begin{smallmatrix}
1 & 3 & 1 & 2 & 2
\end{smallmatrix}
\right]
$
&
$125/3$
&
$6$
\\
\midrule
9
&
$\frac{\KK[T_1,\ldots,T_5]}{\bangle{T_1T_2+T_3^2T_4+T_5^2}}$
&
$\ZZ$
&
$
\left[
\begin{smallmatrix}
1 & 5 & 2 & 2 & 3
\end{smallmatrix}
\right]
$
&
$343/10$
&
$10$
\\
\midrule
10
&
$\frac{\KK[T_1,\ldots,T_5]}{\bangle{T_1T_2+T_3^2T_4+T_5^2}}$
&
$\ZZ$
&
$
\left[
\begin{smallmatrix}
3 & 7 & 4 & 2 & 5
\end{smallmatrix}
\right]
$
&
$1331/84$
&
$84$
\\
\midrule
11
&
$\frac{\KK[T_1,\ldots,T_5]}{\bangle{T_1T_2+T_3^2T_4+T_5^3}}$
&
$\ZZ$
&
$
\left[
\begin{smallmatrix}
2 & 1 & 1 & 1 & 1
\end{smallmatrix}
\right]
$
&
$81/2$
&
$2$
\\
\midrule
12
&
$\frac{\KK[T_1,\ldots,T_5]}{\bangle{T_1T_2+T_3^2T_4+T_5^3}}$
&
$\ZZ$
&
$
\left[
\begin{smallmatrix}
3 & 3 & 1 & 4 & 2
\end{smallmatrix}
\right]
$
&
$343/12$
&
$12$
\\
\midrule
13
&
$\frac{\KK[T_1,\ldots,T_5]}{\bangle{T_1T_2+T_3^2T_4+T_5^3}}$
&
$\ZZ\oplus\ZZ/3\ZZ$
&
$
\left[
\begin{smallmatrix}
2 & 1 & 1 & 1 & 1\\
\b{1} & \b{2} & \b{1} & \b{1} & \b{0}
\end{smallmatrix}
\right]
$
&
$27/2$
&
$6$
\\
\midrule
14
&
$\frac{\KK[T_1,\ldots,T_5]}{\bangle{T_1T_2+T_3^2T_4+T_5^6}}$
&
$\ZZ$
&
$
\left[
\begin{smallmatrix}
3 & 3 & 2 & 2 & 1
\end{smallmatrix}
\right]
$
&
$125/6$
&
$6$
\\
\midrule
15
&
$\frac{\KK[T_1,\ldots,T_5]}{\bangle{T_1T_2+T_3^2T_4^2+T_5^2}}$
&
$\ZZ\oplus\ZZ/2\ZZ$
&
$
\left[
\begin{smallmatrix}
1 & 3 & 1 & 1 & 2\\
\b{1} & \b{1} & \b{0} & \b{0} & \b{1}
\end{smallmatrix}
\right]
$
&
$64/3$
&
$6$
\\
\midrule
16
&
$\frac{\KK[T_1,\ldots,T_5]}{\bangle{T_1T_2+T_3^2T_4^2+T_5^3}}$
&
$\ZZ$
&
$
\left[
\begin{smallmatrix}
3 & 3 & 2 & 1 & 2
\end{smallmatrix}
\right]
$
&
$125/6$
&
$6$
\\
\midrule
17
&
$\frac{\KK[T_1,\ldots,T_5]}{\bangle{T_1T_2+T_3^3T_4+T_5^2}}$
&
$\ZZ$
&
$
\left[
\begin{smallmatrix}
1 & 3 & 1 & 1 & 2
\end{smallmatrix}
\right]
$
&
$128/3$
&
$3$
\\
\midrule
18
&
$\frac{\KK[T_1,\ldots,T_5]}{\bangle{T_1T_2+T_3^3T_4+T_5^2}}$
&
$\ZZ$
&
$
\left[
\begin{smallmatrix}
2 & 4 & 1 & 3 & 3
\end{smallmatrix}
\right]
$
&
$343/12$
&
$12$
\\
\midrule
19
&
$\frac{\KK[T_1,\ldots,T_5]}{\bangle{T_1T_2+T_3^3T_4+T_5^2}}$
&
$\ZZ$
&
$
\left[
\begin{smallmatrix}
3 & 7 & 2 & 4 & 5
\end{smallmatrix}
\right]
$
&
$1331/84$
&
$84$
\\
\midrule
20
&
$\frac{\KK[T_1,\ldots,T_5]}{\bangle{T_1T_2+T_3^3T_4+T_5^2}}$
&
$\ZZ\oplus\ZZ/2\ZZ$
&
$
\left[
\begin{smallmatrix}
1 & 3 & 1 & 1 & 2\\
\b{1} & \b{1} & \b{0} & \b{0} & \b{1}
\end{smallmatrix}
\right]
$
&
$64/3$
&
$6$
\\
\midrule
21
&
$\frac{\KK[T_1,\ldots,T_5]}{\bangle{T_1T_2+T_3^3T_4+T_5^4}}$
&
$\ZZ\oplus\ZZ/2\ZZ$
&
$
\left[
\begin{smallmatrix}
2 & 2 & 1 & 1 & 1\\
\b{1} & \b{1} & \b{1} & \b{1} & \b{0}
\end{smallmatrix}
\right]
$
&
$27/2$
&
$4$
\\
\midrule
22
&
$\frac{\KK[T_1,\ldots,T_5]}{\bangle{T_1T_2+T_3^3T_4^2+T_5^2}}$
&
$\ZZ$
&
$
\left[
\begin{smallmatrix}
3 & 5 & 2 & 1 & 4
\end{smallmatrix}
\right]
$
&
$343/15$
&
$30$
\\
\midrule
23
&
$\frac{\KK[T_1,\ldots,T_5]}{\bangle{T_1T_2+T_3^3T_4^3+T_5^2}}$
&
$\ZZ$
&
$
\left[
\begin{smallmatrix}
2 & 4 & 1 & 1 & 3
\end{smallmatrix}
\right]
$
&
$125/4$
&
$4$
\\
\midrule
24
&
$\frac{\KK[T_1,\ldots,T_5]}{\bangle{T_1T_2+T_3^5T_4+T_5^2}}$
&
$\ZZ$
&
$
\left[
\begin{smallmatrix}
2 & 4 & 1 & 1 & 3
\end{smallmatrix}
\right]
$
&
$125/4$
&
$4$
\\
\midrule
25
&
$\frac{\KK[T_1,\ldots,T_5]}{\bangle{T_1T_2+T_3^6T_4+T_5^2}}$
&
$\ZZ$
&
$
\left[
\begin{smallmatrix}
3 & 5 & 1 & 2 & 4
\end{smallmatrix}
\right]
$
&
$343/15$
&
$30$
\\
\midrule
26
&
$\frac{\KK[T_1,\ldots,T_6]}{\bangle{T_1T_2+T_3T_4+T_5^2,\lambda T_3T_4+T_5^2+T_6^2}}$
&
$\ZZ\oplus\ZZ/2\ZZ$
&
$
\left[
\begin{smallmatrix}
1 & 1 & 1 & 1 & 1 & 1\\
\b{1} & \b{1} & \b{0} & \b{0} & \b{1} & \b{0}
\end{smallmatrix}
\right]
$
&
$16$
&
$2$
\\
\midrule
27
&
$\frac{\KK[T_1,\ldots,T_5]}{\bangle{T_1T_2T_3+T_4^3+T_5^2}}$
&
$\ZZ$
&
$
\left[
\begin{smallmatrix}
1 & 1 & 4 & 2 & 3
\end{smallmatrix}
\right]
$
&
$125/4$
&
$4$
\\
\midrule
28
&
$\frac{\KK[T_1,\ldots,T_5]}{\bangle{T_1T_2T_3+T_4^3+T_5^2}}$
&
$\ZZ$
&
$
\left[
\begin{smallmatrix}
2 & 3 & 1 & 2 & 3
\end{smallmatrix}
\right]
$
&
$125/6$
&
$6$
\\
\midrule
29
&
$\frac{\KK[T_1,\ldots,T_5]}{\bangle{T_1T_2+T_3^3+T_4^2}}$
&
$\ZZ$
&
$
\left[
\begin{smallmatrix}
1 & 5 & 2 & 3 & 1
\end{smallmatrix}
\right]
$
&
$216/5$
&
$5$
\\
\midrule
30
&
$\frac{\KK[T_1,\ldots,T_5]}{\bangle{T_1T_2+T_3^3+T_4^2}}$
&
$\ZZ$
&
$
\left[
\begin{smallmatrix}
1 & 5 & 2 & 3 & 2
\end{smallmatrix}
\right]
$
&
$343/10$
&
$10$
\\
\midrule
31
&
$\frac{\KK[T_1,\ldots,T_5]}{\bangle{T_1T_2+T_3^3+T_4^2}}$
&
$\ZZ$
&
$
\left[
\begin{smallmatrix}
1 & 5 & 2 & 3 & 3
\end{smallmatrix}
\right]
$
&
$512/15$
&
$15$
\\
\midrule
32
&
$\frac{\KK[T_1,\ldots,T_5]}{\bangle{T_1T_2+T_3^3+T_4^2}}$
&
$\ZZ$
&
$
\left[
\begin{smallmatrix}
1 & 5 & 2 & 3 & 4
\end{smallmatrix}
\right]
$
&
$729/20$
&
$20$
\\
\midrule
33
&
$\frac{\KK[T_1,\ldots,T_5]}{\bangle{T_1T_2+T_3^4+T_4^2}}$
&
$\ZZ\oplus\ZZ/2\ZZ$
&
$
\left[
\begin{smallmatrix}
1 & 3 & 1 & 2 & 1\\
\b{1} & \b{1} & \b{0} & \b{1} & \b{0}
\end{smallmatrix}
\right]
$
&
$64/3$
&
$6$
\\
\midrule
34
&
$\frac{\KK[T_1,\ldots,T_5]}{\bangle{T_1T_2+T_3^4+T_4^2}}$
&
$\ZZ\oplus\ZZ/2\ZZ$
&
$
\left[
\begin{smallmatrix}
1 & 3 & 1 & 2 & 2\\
\b{1} & \b{1} & \b{0} & \b{1} & \b{1}
\end{smallmatrix}
\right]
$
&
$125/6$
&
$12$
\\
\midrule
35
&
$\frac{\KK[T_1,\ldots,T_5]}{\bangle{T_1T_2+T_3^5+T_4^2}}$
&
$\ZZ$
&
$
\left[
\begin{smallmatrix}
3 & 7 & 2 & 5 & 1
\end{smallmatrix}
\right]
$
&
$512/21$
&
$21$
\\
\midrule
36
&
$\frac{\KK[T_1,\ldots,T_5]}{\bangle{T_1T_2+T_3^5+T_4^2}}$
&
$\ZZ$
&
$
\left[
\begin{smallmatrix}
3 & 7 & 2 & 5 & 4
\end{smallmatrix}
\right]
$
&
$1331/84$
&
$84$
\\
\midrule
37
&
$\frac{\KK[T_1,\ldots,T_5]}{\bangle{T_1T_2+T_3^6+T_4^2}}$
&
$\ZZ\oplus\ZZ/2\ZZ$
&
$
\left[
\begin{smallmatrix}
2 & 4 & 1 & 3 & 1\\
\b{1} & \b{1} & \b{1} & \b{0} & \b{0}
\end{smallmatrix}
\right]
$
&
$125/8$
&
$8$
\\
\midrule
38
&
$\frac{\KK[T_1,\ldots,T_5]}{\bangle{T_1T_2+T_3^3+T_4^3}}$
&
$\ZZ\oplus\ZZ/3\ZZ$
&
$
\left[
\begin{smallmatrix}
1 & 2 & 1 & 1 & 1\\
\b{1} & \b{2} & \b{2} & \b{0} & \b{0}
\end{smallmatrix}
\right]
$
&
$27/2$
&
$6$
\\
\midrule
39
&
$\frac{\KK[T_1,\ldots,T_5]}{\bangle{T_1T_2+T_3^4+T_4^3}}$
&
$\ZZ$
&
$
\left[
\begin{smallmatrix}
5 & 7 & 3 & 4 & 1
\end{smallmatrix}
\right]
$
&
$512/35$
&
$35$
\\
\midrule
40
&
$\frac{\KK[T_1,\ldots,T_5]}{\bangle{T_1T_2+T_3^4+T_4^3}}$
&
$\ZZ$
&
$
\left[
\begin{smallmatrix}
5 & 7 & 3 & 4 & 2
\end{smallmatrix}
\right]
$
&
$729/70$
&
$70$
\\
\midrule
41
&
$\frac{\KK[T_1,\ldots,T_5]}{\bangle{T_1T_2+T_3^3T_4+T_5^4}}$
&
$\ZZ$
&
$
\left[
\begin{smallmatrix}
2 & 2 & 1 & 1 & 1
\end{smallmatrix}
\right]
$
&
$27$
&
$2$
\\
\midrule
42
&
$\frac{\KK[T_1,\ldots,T_5]}{\bangle{T_1T_2+T_3^4T_4+T_5^3}}$
&
$\ZZ$
&
$
\left[
\begin{smallmatrix}
3 & 3 & 1 & 2 & 2
\end{smallmatrix}
\right]
$
&
$125/6$
&
$6$
\\
\midrule
43
&
$\frac{\KK[T_1,\ldots,T_5]}{\bangle{T_1T_2+T_3^4T_4^2+T_5^3}}$
&
$\ZZ$
&
$
\left[
\begin{smallmatrix}
3 & 3 & 1 & 1 & 2
\end{smallmatrix}
\right]
$
&
$64/3$
&
$3$
\\
\midrule
44
&
$\frac{\KK[T_1,\ldots,T_5]}{\bangle{T_1T_2+T_3^5T_4+T_5^3}}$
&
$\ZZ$
&
$
\left[
\begin{smallmatrix}
3 & 3 & 1 & 1 & 2
\end{smallmatrix}
\right]
$
&
$64/3$
&
$3$
\\
\midrule
45
&
$\frac{\KK[T_1,\ldots,T_5]}{\bangle{T_1T_2+T_3^2+T_4^2}}$
&
$\ZZ\oplus\ZZ/2\ZZ$
&
$
\left[
\begin{smallmatrix}
2 & 4 & 3 & 3 & 1\\
\overline{1} & \overline{1} & \overline{1} & \overline{0} & \overline{0}
\end{smallmatrix}
\right]
$
&
$343/24$
&
$24$
\\
\midrule
46
&
$\frac{\KK[T_1,\ldots,T_5]}{\bangle{T_1T_2+T_3^3+T_4^2}}$
&
$\ZZ$
&
$
\left[
\begin{smallmatrix}
5 & 7 & 4 & 6 & 1
\end{smallmatrix}
\right]
$
&
$1331/70$
&
$70$
\\
\midrule
47
&
$\frac{\KK[T_1,\ldots,T_5]}{\bangle{T_1T_2+T_3^3+T_4^2}}$
&
$\ZZ$
&
$
\left[
\begin{smallmatrix}
5 & 7 & 4 & 6 & 3
\end{smallmatrix}
\right]
$
&
$2197/210$
&
$210$
\\
\bottomrule
\end{longtable}
}
\noindent
where $\lambda\in\KK^*\setminus\{1\}$ in No.~26.
Any two of the Cox rings $\mathcal{R}(X)$ 
listed in the table correspond to non-isomorphic 
varieties.
All corresponding varieties $X$ are rational
and No.~1  is the only smooth one.
\end{theorem}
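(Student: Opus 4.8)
The plan is to recast the classification as a finite enumeration of admissible combinatorial data and then to carry out that enumeration using the anticanonical complex of the earlier sections as the decisive tool. First I would invoke the structure theory for complexity-one torus actions~\cite{HaHe,HaSu}: every $X$ as in the theorem is uniquely determined by its $\Cl(X)$-graded Cox ring $\mathcal{R}(X)$, which is a complete intersection whose defining relations are trinomials $g = \lambda_0 T_0^{l_0} + \lambda_1 T_1^{l_1} + \lambda_2 T_2^{l_2}$ built from monomials $T_i^{l_i} = \prod_j T_{ij}^{l_{ij}}$ attached to a partition of the variables into blocks, together with possibly some free generators $S_k$; from four blocks onward a genuine coefficient parameter $\lambda$ survives (this is the $\lambda$ of No.~26). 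Thus every $X$ is encoded by the block sizes and exponent vectors $l_i$, the number of free generators, and the grading datum $\Cl(X) = \ZZ \oplus \Gamma$ with $\Gamma$ finite together with the degree matrix $[w_1,\ldots,w_r]$. I would fix the admissible elementary operations on this data---permuting variables within a block, permuting blocks, rescaling the relations, and applying lattice automorphisms of $\Cl(X)$---so that two data sets give isomorphic $X$ precisely when they differ by such operations; this turns the assertion that the table is complete and irredundant into a precise finite statement.

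Next I would translate each hypothesis into a constraint on the data. $\QQ$-factoriality together with Picard number one forces $\rk \Cl(X) = 1$, so in $\Cl(X) = \ZZ \oplus \Gamma$ the free parts $w_i \in \ZZ$ are positive and the ample cone is a single ray. Well-definedness of the grading requires all monomials of a relation to share one $\Cl(X)$-degree, which is a tight system of linear and congruence conditions on the $w_i$. The Fano property says the anticanonical class, expressed in Cox coordinates as $\sum_i w_i$ minus the degrees of the relations, lies in the relative interior of that ray. The crucial input is the characterization of singularities by the anticanonical complex $\mathcal{A}_X$: terminality is equivalent to the statement that the only lattice points of $\mathcal{A}_X$ are the origin and its vertices, the primitive ray generators of the defining fan. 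Since $\mathcal{A}_X$ is assembled explicitly from the exponent vectors $l_i$ and the weights $w_i$, this lattice-point condition becomes a concrete system of arithmetic inequalities in the $l_{ij}$ and the $w_i$.

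The heart of the argument is to show that these inequalities have only finitely many solutions and to list them. First I would bound the combinatorial type: a block of size exceeding two, an exponent $l_{ij}$ too large, an additional monomial, or too many free generators each forces a non-vertex lattice point into $\mathcal{A}_X$ or destroys ampleness of the anticanonical class, so the number of blocks, the block sizes, the exponents, and the number of free generators are all bounded. With the type fixed, the grading conditions determine the free weights $w_i$ up to the ample-cone normalization, while the torsion part $\Gamma$ is constrained by requiring the $\Cl(X)$-grading to be almost free and $\mathcal{R}(X)$ to be factorially graded. Running through the finitely many surviving types---keeping only genuine trinomial relations, which is exactly the non-toric condition, and normalizing by the elementary operations---yields a finite list, which I would check coincides with the $40$ entries of the table. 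For each entry I would finally confirm the stated invariants: $(-\mathcal{K}_X)^3$ from intersection theory on the ambient toric variety restricted to $X$, and $\iota(X)$ as the least positive integer $m$ for which $m\mathcal{K}_X$ becomes Cartier, determined by the local class groups along the relevant torus strata.

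I expect the main obstacle to be the combined boundedness-and-completeness step. Extracting from the anticanonical-complex criterion a set of inequalities sharp enough to guarantee finiteness, and at the same time organizing the resulting case distinction so that it provably exhausts every possibility with no omission, is delicate; keeping this enumeration synchronized with the normalization by elementary operations---so that each isomorphism class is produced exactly once---is the part most prone to error and will demand the careful, largely computer-assisted bookkeeping that underlies a classification of this size.
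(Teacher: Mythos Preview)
Your high-level plan is exactly the paper's: encode $X$ by its Cox-ring data, use the anticanonical complex $A_X^c$ to translate terminality into a lattice-point condition, argue that this forces finiteness of the data, and finish by computer enumeration. You have also correctly located the hard step. But the proposal does not supply the mechanism that actually makes that step go through, and this is a genuine gap.

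Concretely, the paper works not with the weight matrix $[w_1,\ldots,w_r]$ but with the dual description via the integer matrix $P$ of Construction~\ref{constr:RAPdown}, whose columns are the primitive ray generators of the ambient fan. In this picture the vertices of $A_X^c$ are computed explicitly (Corollary~\ref{thm:cpl1antican}), and two specific tools produce the bounds. First, log-terminality forces the exponents $l_\varrho$ along each elementary big cone to form a \emph{platonic triple} $(l_0,l_1,1)$, $(l_0,2,2)$, $(3,3,2)$, $(4,3,2)$, $(5,3,2)$ (Corollary~\ref{cor:logterm2lbound}); this is what bounds $r$, the $n_i$, and most of the $l_{ij}$, and incidentally it allows $n_0=3$ (entries~27,~28), so your claim that block size exceeding two is already excluded is too strong. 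Second, and this is the ingredient your proposal is missing entirely, the remaining entries of $P$ (the $d$-block) are bounded by exhibiting explicit three-dimensional lattice simplices inside or built from leaves of $A_X^c$ that have exactly one interior lattice point, and then invoking the volume bound $\mathrm{vol}\le 12$ of Averkov--Kr\"umpelmann--Nill~\cite{AvKrNi}. Without this device the inequalities you extract from ``no extra lattice points in $A_X^c$'' do not by themselves give a finite search space; the paper spends the bulk of Section~\ref{sec:terminal3folds} (Propositions~\ref{prop:l11-l12-1}--\ref{prop:211}) on precisely this, with a long case analysis organised around the trapezoid shape of the lineality part $A_{X,0}^c$.

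So your outline is right, but to turn it into a proof you need to (i) switch to the $P$-matrix description so that $A_X^c$ becomes computable, (ii) invoke the platonic-triple constraint to bound the $L$-part of $P$, and (iii) bring in the \cite{AvKrNi} simplex volume bound to control the $d$-part. The final computer check is then exactly as you anticipate.
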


Our approach works also in higher dimensions
and applies more generally to Fano varieties $X$ 
with a complete intersection Cox ring $\mathcal{R}(X)$.
For such varieties we introduce the anticanonical 
complex as a combinatorial data in the spirit of the 
Fano polytopes from toric geometry.
Theorem~\ref{thm:main} characterizes in particular 
canonical and terminal singularities in terms of 
lattice points inside the anticanonical complex.
Using the knowledge on the Cox ring of varieties
with a torus action of complexity one provided 
by~\cite{HaSu,HaHe}, we obtain an explicit
description of the anticanonical complex in 
that case, see Section~\ref{sec:cpl1-antican}.
This enables us to derive  in Section~\ref{sec:terminal3folds}
effective bounds for the defining data of the 
terminal Fano threefolds of Picard number one 
that come with an action 
of a two-dimensional torus.
One of the basic principles is to construct 
suitable lattice simplices via the anticanonical complex
and to apply the volume bounds obtained 
in~\cite{AvKrNi,Ka,Ka2}.
Having found reasonable bounds, the remaining 
step is to figure out the terminal ones from 
the list of possible candidates by means of
Theorem~\ref{thm:main}. 
This is done using the software package~\cite{MDS}, 
where among other things our methods are implemented, 
and finally leads to the list given in 
Theorem~\ref{thm:classif}.

We now present the construction of the  anticanonical 
complex and the characterization of singularities.
Consider a normal Fano variety~$X$ 
with divisor class group $\Cl(X)$ and Cox ring 
$\mathcal{R}(X)$.
Recall that $\mathcal{R}(X)$ is factorially 
$\Cl(X)$-graded, i.e.,~every homogeneous 
nonzero nonunit is a product of $\Cl(X)$-primes,
see~\cite[Sec.~3]{Ha2}.
We assume that  $\mathcal{R}(X)$ is a complete 
intersection in the sense that it comes with a 
presentation by $\Cl(X)$-homogeneous generators 
$T_\varrho$ and relations $g_i$:
$$
\mathcal{R}(X) 
\ = \ 
\KK[T_\varrho; \; \varrho \in \Rho] / \bangle{g_1,\ldots,g_s},
$$
where the meaning of the index set $\Rho$ becomes clear 
soon, the $T_\varrho$ define pairwise 
nonassociated $\Cl(X)$-primes in $\mathcal{R}(X)$ 
and the dimension of $\mathcal{R}(X)$ equals 
$\vert \Rho \vert - s$.
This setting leads to a closed embedding
$X \subseteq Z_\Sigma$ into a toric variety
$Z_\Sigma$ arising from a fan $\Sigma$,
where the divisor class group and
Cox ring of $Z_\Sigma$ are given~by
$$
\Cl(Z_\Sigma) \ \cong \ \Cl(X),
\qquad\qquad
\mathcal{R}(Z_\Sigma) 
\ = \ \KK[T_\varrho; \; \varrho \in \Rho];
$$
see~\cite[Constr.~3.13 and Prop.~3.14]{Ha2}. 
Removing successively closed torus orbits 
from $Z_\Sigma$, we can achieve that $X$ 
intersects every closed torus orbit of 
$Z_\Sigma$. We speak then of $X \subseteq Z_\Sigma$
as the minimal toric embedding.

We provide the necessary details for 
defining the anticanonical complex.
Consider the degree homomorphism
$Q \colon \ZZ^\Rho \to \Cl(X)$ sending the 
$\varrho$-th canonical basis vector 
$e_\varrho \in \ZZ^\Rho$ to $\deg(T_\varrho) \in \Cl(X)$
and let $P^* \colon \ZZ^n \to \ZZ^{\Rho}$ 
be a linear embedding with image $\ker(Q)$.
Then we have
$$
\Cl(Z_\Sigma) 
\ \cong \
\ZZ^\Rho/P^*(\ZZ^n) 
\ \cong  \
\Cl(X).
$$
Denote by $P \colon \ZZ^{\Rho} \to \ZZ^n$ 
the dual map of $P^*$.
Set $e_{\Sigma} := \sum e_\varrho$.
Then the canonical classes of $Z_\Sigma$ and $X$ 
are given as
$$ 
\mathcal{K}_{\Sigma}
\ = \
-Q(e_{\Sigma}),
\qquad\qquad
\mathcal{K}_X 
\ = \
\sum \deg(g_i)  + \mathcal{K}_{\Sigma}.
$$
The defining fan $\Sigma$ of $Z_\Sigma$ lives in the 
lattice $\ZZ^n$ and is obtained as follows.
Let $\gamma_{\Rho} \subseteq \QQ^{\Rho}$ be the positive
orthant, spanned by the $e_\varrho$,
and $e_X \in \ZZ^{\Rho}$ any representative of $\mathcal{K}_X$.
Then we have polytopes
$$
B(-\mathcal{K}_X) 
\ := \ 
Q^{-1}(-\mathcal{K}_X) \cap \gamma_{\Rho}
\ \subseteq \ 
\QQ^{\Rho},
\qquad\qquad
(P^*)^{-1}(B(-\mathcal{K}_X) + e_X)
\ \subseteq \
\QQ^n.
$$
The normal fan $\Sigma_c$ of the second polytope defines 
a toric variety $Z_c$ containing $X$ as a subvariety
and $\Sigma$ is the subfan of $\Sigma_c$ generated
by the cones that correspond to a torus orbit of $Z_c$
intersecting $X$.
In particular, the rays of $\Sigma$ have exactly 
the vectors $v_\varrho := P(e_\varrho) \in \ZZ^n$ as their 
primitive generators; we identify $\varrho \in \Rho$ 
with the ray through $v_\varrho$.

Let $\trop(X) \subseteq \QQ^n$ denote the 
tropical variety of $X \cap \TT$, endowed 
with a fan structure that refines 
the projected normal fan $P(\mathcal{N}(B))$ 
in $\QQ^n$ of the Minkowski sum 
$B := B(g_1) + \ldots + B(g_s)$
of the Newton polytopes $B(g_i)$ 
of the relations $g_i$, 
i.e.,~$B(g_i) \subseteq \QQ^\Rho$ is the 
convex hull over the exponent vectors 
of $g_i$.

\begin{definition}
The \emph{anticanonical polyhedron} of 
$X$ is the dual polyhedron $A_X \subseteq \QQ^n$ 
of the polytope
$$
B_X
\ := \ 
(P^*)^{-1}(B(-\mathcal{K}_X) + B - e_{\Sigma}) 
\ \subseteq \ 
\QQ^n.
$$
The \emph{anticanonical complex} of $X$ 
is the coarsest common refinement of polyhedral 
complexes
$$ 
A^c_X
\ := \ 
{\rm faces}(A_X) \sqcap \Sigma \sqcap \trop(X).
$$
The \emph{relative interior} of $A_X^c$ 
is the interior of its support 
with respect to the tropical variety $\trop(X)$.
\end{definition}

\begin{example}
The $E_6$-singular cubic surface 
$X = V(z_1z_2^2 + z_2z_0^2 + z_3^3) \subseteq \PP_3$ 
is invariant under the $\KK^*$-action
$$
t \cdot [z_0,\ldots,z_3] 
\ = \
[z_0,t^{-3}z_1,t^{3}z_2,tz_3]
$$
on $\PP_3$. The divisor class group and the Cox ring 
of the surface $X$ are explicitly given by 
$$ 
\Cl(X) \ = \ \ZZ,
\qquad\qquad
\mathcal{R}(X) 
\ = \ 
\KK[T_{1},T_{2},T_{3},T_{4}]/ \bangle{T_{1}T_{2}^{3}+T_{3}^{3}+T_{4}^{2}},
$$
where the $\Cl(X)$-degrees of $T_1$, $T_2$, $T_3$, $T_4$ 
are $3$, $1$, $2$, $3$.
The minimal ambient toric variety $Z_\Sigma$
is an open subset of $Z_c = \PP_{3,1,2,3}$
and the tropical variety in $\QQ^{3}$ is
$$
\trop(X)
\ = \ 
\cone(e_{1},\pm e_{3})
\cup 
\cone(e_{2},\pm e_{3})
\cup 
\cone(-e_{1}-e_{2}, \pm e_{3}),
$$
where $e_{i} \in \QQ^{3}$ is the $i$-th canonical basis vector.
The anticanonical polyhedron $A_X \subseteq \QQ^3$ has the vertices 
$$
(-3,-3,-2),\
(-1,-1,-1),\
(3,0,1),\
(0,2,1),\
(0,0,1),\
(0,0,-1/5).
$$
The anticanonical complex $A_{X}^{c} = A_{X} \sqcap \trop(X)$ lives
on the three cones of $\trop(X)$ and thus is of dimension two.
\begin{center}
\includegraphics{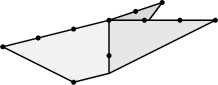}
\end{center}
\end{example}

Our aim is to characterize the behaviour 
of singularities of $X$ in terms of lattice 
points of the anticanonical complex $A_X^c$. 
Recall that for a $\QQ$-Gorenstein variety~$X$, 
that means that some non-zero multiple of the 
canonical divisor $\mathcal{K}_{X}$ is Cartier,
various types of singularities
are defined via the ramification formula
\begin{eqnarray*}
\mathcal{K}_{X'}
\ - \ 
\varphi^{*}(\mathcal{K}_{X})
& = &
\sum a_{i}E_{i}, 
\end{eqnarray*}
where $\varphi \colon X' \to X$ is a resolution,
the $E_{i}$ are the prime components of the 
exceptional divisor and the $a_{i}$ are called
the \emph{discrepancies} of the resolution.
One says that $X$ has at most 
\emph{log terminal ($\varepsilon$-log terminal for $0 < \varepsilon < 1$,
canonical, terminal)}
singularities, if for every resolution the
discrepancies $a_{i}$ satisfy $a_{i} > -1$
($a_{i} > -1+\varepsilon$, $a_{i} \ge 0$, $a_{i} > 0$).

We concern ourselves with Fano varieties $X$ that are
\emph{(strongly) tropically resolvable} in the sense that
some (every) subdivision of $\Sigma \sqcap \trop(X)$ admits a
regular refinement that induces a resolution of 
singularities $X' \to X$ with a suitable Mori dream space $X'$.
As we will see in Proposition~\ref{prop:cpl1isstr},
all normal rational varieties with a torus action 
of complexity one are strongly tropically resolvable.

\begin{theorem}
\label{thm:main}
Let $X$ be a (strongly) tropically resolvable
normal Fano variety with a complete intersection 
Cox ring.
\begin{enumerate}
\item
$A_X^c$ contains the origin in its relative
interior and all primitive generators of 
the fan $\Sigma$ are vertices of $A_X^c$.
\item 
$X$ has at most log terminal singularities 
if (and only if) the anticanonical complex 
$A_X^c$ is bounded.
\item
$X$ has at most $\varepsilon$-log terminal singularities
if (and only if) $0$ is the only lattice point in 
$\varepsilon A_{X}^{c}$.
\item
$X$ has at most canonical singularities
if (and only if) $0$ is the only 
lattice point in the relative interior of 
$A_X^c$.
\item
$X$ has at most terminal singularities
if (and only if) $0$ and the primitive generators 
$v_\varrho$ for $\varrho \in \Sigma^{(1)}$
are the only lattice points of $A_X^c$.
\end{enumerate}
\end{theorem}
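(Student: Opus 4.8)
The plan is to deduce all five statements from a single \emph{discrepancy formula} for the exceptional divisors of tropical resolutions, after which each assertion becomes a purely combinatorial comparison. First I would record the shape of $A_X^c$: since $A_X$ is the dual polyhedron of $B_X$, writing $h_{B_X}$ for the support function normalised so that
\[
A_X \ = \ \{v \in \QQ^n; \ h_{B_X}(v) \le 1\},
\]
the complex $A_X^c$ is exactly the part of $A_X$ lying over $\trop(X)$, subdivided by $\Sigma \sqcap \trop(X)$, and on each of its cones the function $h_{B_X}$ is linear. Statement (i) I would treat separately and directly: the origin lies in the relative interior of $A_X^c$ precisely because $-\mathcal{K}_X$ is ample, which forces $0$ to be an interior point of $B_X$ in every direction of $\trop(X)$; and each primitive generator $v_\varrho = P(e_\varrho)$ satisfies $h_{B_X}(v_\varrho) = 1$ and spans a distinguished facet of $A_X$ by the construction of $B_X$ out of $B(-\mathcal{K}_X)$ and $e_\Sigma$, hence is a vertex of $A_X^c$.

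The core is the claim that for a tropical resolution $\varphi \colon X' \to X$ arising from a regular refinement of $\Sigma \sqcap \trop(X)$, every ray of the refinement with primitive generator $v \in \trop(X)$ gives an exceptional prime divisor $E_v$ of discrepancy
\[
a_v \ = \ h_{B_X}(v) \, - \, 1.
\]
I would prove this by viewing $X \subseteq Z_\Sigma$ and $X' \subseteq Z_{\Sigma'}$ as proper transforms inside toric ambient varieties and applying adjunction to both, using $\mathcal{K}_X = \sum \deg(g_i) + \mathcal{K}_\Sigma$ and its analogue on $X'$. The coefficient of $E_v = D_v|_{X'}$ in $\mathcal{K}_{X'} - \varphi^*\mathcal{K}_X$ then splits into a purely toric term, given by the linear extension of the $-\mathcal{K}_\Sigma$ support function at $v$ minus $1$, and a correction recording the order of vanishing of each relation $g_i$ along the new toric divisor $D_v$. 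That order is a support-function value of the Newton polytope $B(g_i)$, so summing over $i$ assembles the Minkowski sum $B = B(g_1) + \ldots + B(g_s)$ and, together with the contributions of $B(-\mathcal{K}_X)$ and $-e_\Sigma$, reconstitutes precisely $h_{B_X}(v)$.

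The main obstacle is to make this formula \emph{exact} over $\trop(X)$: one must show that $E_v = D_v \cap X'$ is a single reduced prime divisor carrying exactly the predicted multiplicity, and that the strict transforms of the hypersurfaces $V(g_i)$ vanish along $D_v$ to precisely the Newton-polytope order. This is where the tropical variety is indispensable, since $\trop(X)$ is cut out by the condition that the defining minimum of each Newton polytope be attained at least twice; this nondegeneracy of the initial forms of the $g_i$ is what guarantees irreducibility and reducedness of $E_v$ together with the transversality needed for the clean multiplicity count. The hypothesis of \emph{strong tropical resolvability} is used twice here: to produce the regular refinement inducing an honest resolution $X'$ with Mori dream space total space, and to ensure that \emph{every} primitive lattice vector of $\trop(X)$ is realised as some $E_v$, so that no discrepancy is missed.

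Granting the formula, the remaining assertions reduce to reading inequalities off the identity $a_v = h_{B_X}(v) - 1$, using that a primitive $v \in \trop(X)$ lies in $A_X^c$ iff $h_{B_X}(v) \le 1$ iff $a_v \le 0$, lies in its relative interior iff $a_v < 0$, and lies in $\varepsilon A_X^c$ iff $a_v \le \varepsilon - 1$. Thus boundedness of $A_X^c$ is equivalent to no ray of $\trop(X)$ remaining inside $A_X$ to infinity, hence to the discrepancies staying above $-1$, which is (ii); $\{0\}$ being the only lattice point of $\varepsilon A_X^c$ says every exceptional $a_v > \varepsilon - 1$, giving (iii); only $0$ in the relative interior forces every exceptional $a_v \ge 0$, giving (iv); and $A_X^c$ meeting the lattice only in $0$ and the non-exceptional vertices $v_\varrho$, $\varrho \in \Sigma^{(1)}$, forces every exceptional $a_v > 0$, giving (v). In each case the forward implication needs only the formula, while the parenthetical converse uses realisability of all lattice rays of $\trop(X)$ as exceptional divisors.
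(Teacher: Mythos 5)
Your proposal is correct and takes essentially the same route as the paper: your unified formula $a_v = h_{B_X}(v)-1$ is exactly the paper's Proposition~\ref{prop:discrep} (stated there case-wise as $a_\varrho = \Vert v_\varrho\Vert/\Vert v'_\varrho\Vert - 1$ for rays leaving $A_X^c$ and $a_\varrho \le -1$ for rays contained in it), proved by the same adjunction comparison of canonical divisor representatives on $X \subseteq Z_\Sigma$ and $X' \subseteq Z_{\Sigma'}$ with Newton-polytope corrections, and your lattice-point deduction of (ii)--(v), your separate duality argument for (i), and your two uses of strong tropical resolvability all mirror the paper's proof. The only notable difference is that the primality and exact-multiplicity issues you propose to settle via nondegeneracy of initial forms are in the paper absorbed into the very definition of a tropical resolution (the Cox ring of $X'$ is required to be the shifted complete intersection, whence the exceptional divisors are prime and the shifted relations give a genuine canonical representative), with the burden shifted to verifying strong tropical resolvability, e.g.\ in Proposition~\ref{prop:cpl1isstr}.
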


Note that these statements generalize the corresponding 
characterizations of toric singularities in terms of 
lattice polytopes given for example in~\cite{BoBo}.  
In the toric case, i.e.,~in the absence of relations $g_i$, 
our anticanonical polytope $A_X$ is just the Fano polytope 
and the anticanonical complex is the subdivison of $A_X$ 
by the fan $\Sigma$.


\tableofcontents


We are grateful to Simon Keicher for supporting our 
work with his implementation of the anticanonical 
complex in~\cite{MDS} and for his very helpful advice 
concerning the computations.
Moreover, we would like to thank the 
referees for several very valuable suggestions.

\section{Discrepancies}
\label{sec:discr}

Here we prove Theorem~\ref{thm:main}.
The setting is the one introduced in 
Section~\ref{sec:intro}.
In particular, $X$ is a normal Fano 
variety with a complete intersection 
Cox ring $\mathcal{R}(X)$ 
given by $\Cl(X)$-homogeneous generators and 
relations and we have the associated 
minimal toric embedding:
$$
\mathcal{R}(X) 
\ = \ 
\KK[T_\varrho; \; \varrho \in \Rho] / \bangle{g_1,\ldots,g_s},
\qquad\qquad
X \subseteq Z_\Sigma.
$$

Assertion~(i) of Theorem~\ref{thm:main} 
holds under more general assumptions.
Therefore, we state and prove it separately. 
Note that we always have $0 \in \relint(A_X)$. 
For any ray $\varrho \in \Rho$ 
with $\varrho \nsubseteq A_X$ 
we denote by $v_{\varrho}'$
the intersection point  
of $\varrho$ and the boundary $\partial A_X$.

\begin{proposition}
Assume that the anticanonical class $-\mathcal{K}_X$ 
lies in the relative interior of the movable cone
of $X$.
Then, for every ray $\varrho \in \Sigma$, the 
primitive generator $v_{\varrho} \in \varrho$ 
is a vertex of $A_X$. 
In particular, $\varrho \nsubseteq A_{X}$ and we 
have $v'_\varrho = v_\varrho$.
\end{proposition}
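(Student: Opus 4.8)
The plan is to exploit the polar duality between $A_X$ and $B_X$ and thereby turn the claim into a statement about the facets of $B_X$. Recall that the dual polyhedron is $A_X = \{v \in \QQ^n : \bangle{u,v} \ge -1 \text{ for all } u \in B_X\}$ and that we already know $0 \in \relint(A_X)$. Since $B_X = (P^*)^{-1}(\hat B)$ with $\hat B := B(-\mathcal{K}_X) + B - e_\Sigma$ and $(P^*)^{-1}$ a linear isomorphism of $\ker(Q)_\QQ$ onto $\QQ^n$, every linear form on $\QQ^n$ pulls back to a coordinate functional on $\hat B$. Concretely, for $u = (P^*)^{-1}(\tilde u) \in B_X$ the defining relation $v_\varrho = P(e_\varrho)$ together with the adjointness of $P$ and $P^*$ gives $\bangle{u, v_\varrho} = \bangle{P^* u, e_\varrho} = \tilde u_\varrho$. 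Thus the $\varrho$-th constraint on $B_X$ is governed by the $\varrho$-th coordinate on $\hat B$, and $v_\varrho$ will be a vertex of $A_X$ precisely when the face of $\hat B$ minimizing this coordinate is a facet lying in the affine hyperplane $\{\tilde u_\varrho = -1\}$; note this hyperplane misses the origin, so such a facet forces the constraints active at $v_\varrho$ to span $\QQ^n$, which is exactly what vertexhood requires.

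Next I would compute the relevant minimum. Writing $\tilde u = a + b - e_\Sigma$ with $a \in B(-\mathcal{K}_X)$ and $b \in B$, and using $(e_\Sigma)_\varrho = 1$, one obtains $\tilde u_\varrho = a_\varrho + b_\varrho - 1$, so that $\min_{\hat B} \tilde u_\varrho = \min a_\varrho + \min b_\varrho - 1$, the two minima taken over $B(-\mathcal{K}_X)$ and $B$ respectively. Both $a_\varrho$ and $b_\varrho$ are non-negative (the first since $B(-\mathcal{K}_X) \subseteq \gamma_\Rho$, the second since Newton polytopes lie in the positive orthant), whence the minimum is at least $-1$ and $v_\varrho \in A_X$. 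To pin it down to exactly $-1$ I need two facts. First, $\min_B b_\varrho = 0$, which says that each relation $g_i$ carries a monomial not divisible by $T_\varrho$, i.e.\ $T_\varrho \nmid g_i$; this holds for the minimal presentation because the $T_\varrho$ are pairwise non-associated $\Cl(X)$-primes. Second, $\min_{B(-\mathcal{K}_X)} a_\varrho = 0$ and, more importantly, the face $F_\varrho := B(-\mathcal{K}_X) \cap \{a_\varrho = 0\}$ is a facet; this is where the hypothesis enters.

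For the facet statement I would translate the movable-cone condition into the geometry of the fibre polytope $B(-\mathcal{K}_X) = Q^{-1}(-\mathcal{K}_X) \cap \gamma_\Rho$, which is $n$-dimensional since $-\mathcal{K}_X$ lies in the interior of the effective cone. Setting $w_\varrho := Q(e_\varrho)$ and $\tau_\varrho := \cone(w_{\varrho'} ; \varrho' \ne \varrho)$, one has the standard description $\Mov(X) = \bigcap_\varrho \tau_\varrho$. As $X$ is Fano, $-\mathcal{K}_X$ is ample and $\Mov(X)$ is full-dimensional; since each $\tau_\varrho \supseteq \Mov(X)$ is then full-dimensional too, the hypothesis $-\mathcal{K}_X \in \relint(\Mov(X))$ yields $-\mathcal{K}_X \in \relint(\tau_\varrho)$ for every $\varrho$, so in particular the $w_{\varrho'}$ with $\varrho' \ne \varrho$ span $\Cl(X)_\QQ$. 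Consequently $F_\varrho = Q^{-1}(-\mathcal{K}_X) \cap \cone(e_{\varrho'} ; \varrho' \ne \varrho)$ is the fibre of $Q$ over a relative-interior point of $\tau_\varrho$ and hence has the expected dimension $(\lvert \Rho\rvert - 1) - \rk \Cl(X) = n - 1$: it is a facet of $B(-\mathcal{K}_X)$. Finally, since the minimizing face of a Minkowski sum is the sum of the minimizing faces of the summands, the level-$(-1)$ face of $\hat B$ equals $F_\varrho + (\text{min-face of } B) - e_\Sigma$; it is a proper face (as $B(-\mathcal{K}_X) \not\subseteq \{a_\varrho = 0\}$ makes $\tilde u_\varrho$ non-constant) that contains a translate of $F_\varrho$, so its dimension is exactly $n-1$. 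Dualizing back, $v_\varrho$ is a vertex of $A_X$. The ``in particular'' part then follows at once: a point $u^\ast$ of this facet satisfies $\bangle{u^\ast, v_\varrho} = -1$, so $\bangle{u^\ast, t v_\varrho} = -t < -1$ for $t > 1$, giving $t v_\varrho \notin A_X$ and hence $\varrho \nsubseteq A_X$; and since $0 \in \relint(A_X)$ while $v_\varrho \in \partial A_X$, convexity forces $v_\varrho$ to be the unique boundary point of $\varrho$, i.e.\ $v'_\varrho = v_\varrho$.

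The main obstacle I anticipate is the dimension bookkeeping in the middle paragraph: showing that $F_\varrho$ is genuinely a facet and not a face of higher codimension requires the relative-interior hypothesis rather than mere membership in $\Mov(X)$, and this must be combined with the behaviour of faces under Minkowski sums without accidentally losing a dimension when the minimizing face of $B$ happens to be low-dimensional. The auxiliary ingredients — full-dimensionality of $\Mov(X)$, the identity $\Mov(X) = \bigcap_\varrho \tau_\varrho$, and the fibre-dimension count for $Q$ — are routine but need careful assembly, and the innocuous-looking fact $T_\varrho \nmid g_i$ (equivalently $\min_B b_\varrho = 0$) is genuinely indispensable: without it the minimum would strictly exceed $-1$, $v_\varrho$ would lie in $\relint(A_X)$, and vertexhood would fail.
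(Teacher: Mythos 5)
Your proof is correct and takes essentially the same route as the paper's: the adjointness computation $\bangle{u,v_\varrho} = \bangle{P^*u, e_\varrho}$, the lower bound $\ge -1$ from positivity of $B(-\mathcal{K}_X)$ and $B$, the facet $B(-\mathcal{K}_X)\cap\gamma_\varrho$ extracted from the movable-cone hypothesis, the nonemptiness of $B\cap\gamma_\varrho$ via $T_\varrho \nmid g_i$, and the Minkowski-sum face argument all play exactly the same roles in the paper's proof. The only differences are expository — you spell out the fibre-dimension count and the passage from $\relint(\Mov(X))$ to $\relint(Q(\gamma_\varrho))$ via full-dimensionality, where the paper simply invokes the description of the movable cone from \cite[Prop.~4.1]{Ha2}.
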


\begin{proof}
By construction, the anticanonical polyhedron $A_X$ 
is the intersection of the half spaces 
$$
H_u \ := \ \{v \in \QQ^n; \; \bangle{u,v} \ge -1\},
\qquad
u \ \in \ 
B_{X}.
$$
Thus, our task is to show that for every 
ray $\varrho \in \Rho$ there is a facet
$B_{\varrho} \subseteq B_{X}$ with 
$\bangle{b,v_{\varrho}} = -1$ for all 
$b \in B_{\varrho}$.

Fix $\varrho \in \Rho$.
Then $v_\varrho = P(e_\varrho)$ holds with a unique
canonical basis vector $e_\varrho \in \ZZ^\Rho$.
Thus, for any $u \in \QQ^n$, we have 
$$ 
\bangle{u,v_\varrho} 
\ = \ 
\bangle{u,P(e_\varrho)}
\ = \ 
\bangle{P^*(u),e_\varrho}
$$
and $P^*(B_{X})$ equals $B(-\mathcal{K}_X)+B-e_{\Sigma}$. 
Since $B(-\mathcal{K}_X)$ and $B$ both lie in the
positive orthant $\gamma_{\Rho}$, 
we conclude $\bangle{u,v_\varrho} \ge -1$
for all $u \in B_X$.

Let $\gamma_{\varrho} \preceq \gamma_{\Rho}$ be the facet 
consisting of points with $\varrho$-th coordinate zero. 
The description of the movable cone given in~\cite[Prop.~4.1]{Ha2}
shows that $-\mathcal{K}_X$ lies in the relative interior 
of $Q(\gamma_\varrho)$. It follows that 
$$
B^{\varrho}(-\mathcal{K}_X) 
\ := \ 
B(-\mathcal{K}_X) \cap \gamma_{\varrho}
$$
is a facet of $B(-\mathcal{K}_X)$. 
Note that $\bangle{e,e_{\varrho}}=0$ holds for all
$e \in B^{\varrho}(-\mathcal{K}_X)$.
We claim that $B^{\varrho} := B \cap \gamma_{\varrho}$ is nonempty.
Indeed, since every $g_i$ is irreducible, it has 
an exponent $b_i \in B(g_i)$ with $\varrho$-th coordinate 
zero.
Thus $b_1+\ldots+b_s \in B^{\varrho}$ holds.
Note that we have $\bangle{e,e_{\varrho}}=0$ for all
$e \in B^{\varrho}$.
Since zero is the minimal possible value for 
linear forms from $B(-\mathcal{K}_X) + B$ on $e_\varrho$,
we see that $B^{\varrho}(-\mathcal{K}_X) + B^{\varrho}-e_{\Sigma}$
is a face of $B(-\mathcal{K}_X) + B -e_{\Sigma}$. 
By dimension reasons, it is a facet.
\end{proof}

Consider a toric modification
$Z_{\Sigma'} \to Z_\Sigma$ given by 
a subdivision $\Sigma' \to \Sigma$ 
of fans.
We introduce a shift of polynomials 
from
$\KK[T_\varrho; \; \varrho \in \Rho]$
to
$\KK[T_{\varrho'}; \; \varrho' \in \Rho']$,
where $\Rho \subseteq \Sigma$ and 
$\Rho' \subseteq \Sigma'$ are the sets of rays.
The toric Cox constructions $P \colon \ZZ^\Rho \to \ZZ^n$ 
and $P' \colon \ZZ^{\Rho'} \to \ZZ^n$ 
define homomorphisms of tori
$$ 
\xymatrix{
{\TT^{\Rho'}}
\ar[r]^{p'}
&
{\TT^{n}}
&
{\TT^{\Rho}}
\ar[l]_{p}
}.
$$
Let $g \in \KK[T_\varrho; \; \varrho \in \Rho]$
be without monomial factors.
The push-down of $g$ is the unique
$p_*(g) \in \KK[T_1,\ldots,T_n]$ without monomial factors 
such that $T^{\mu}p^*(p_*(g))= g$ holds for some 
Laurent monomial $T^\mu \in \KK[T_\varrho^{\pm 1}; \; \varrho \in \Rho]$.
The \emph{shift} of $g$ is the unique
$g' \in \KK[T_{\varrho'};\;\varrho' \in  \Rho']$ 
without monomial factors satisfying $p'_*(g') = p_*(g)$.

\begin{definition}
\label{def:tropres}
Let $X \subseteq Z_{\Sigma}$ be the minimal toric 
embedding of a complete variety given by a 
complete intersection Cox ring 
$$
\mathcal{R}(X) 
\ = \
\KK[T_{\varrho}; \; \varrho \in \Rho] / \bangle{g_1,\ldots,g_s}.
$$
\begin{enumerate}
\item
We call the modification $X' \to X$ arising from a subdivision
$\Sigma' \to \Sigma$ of fans a 
\emph{tropical resolution of singularities}
if $\Sigma'$ subdivides $\Sigma \sqcap \trop(X)$
and~$X'$ is smooth with complete intersection Cox ring
defined by the shifts $g_i'$ of $g_i$:
$$
\mathcal{R}(X') 
\ = \ 
\KK[T_{\varrho'}; \; \varrho' \in \Rho'] /\bangle{g'_1,\ldots,g'_s}.
$$ 
\item
We say that $X$ is \emph{strongly tropically resolvable} 
if every subdivision of $\Sigma \sqcap \trop(X)$
admits a regular refinement providing a tropical resolution 
of singularities. 
\end{enumerate}
\end{definition}

Assertions~(ii) to~(v) of Theorem~\ref{thm:main} 
will be obtained as a consequence of the 
following description of discrepancies 
of a tropical resolution. 

\begin{proposition}
\label{prop:discrep}
Let $\varphi \colon X' \to X$ be 
a tropical resolution of singularities
given by subdivision $\Sigma' \to \Sigma$
of fans.  
Then the discrepancy $\alpha_{\varrho}$ along
a divisor $D_{\varrho}$ corresponding to 
a ray $\varrho \in \Sigma'$ satisfies
$$
\alpha_{\varrho}
 =  
\frac{\Vert v_{\varrho} \Vert}{\Vert v'_{\varrho} \Vert} -1
\text{ if } \varrho \nsubseteq A_{X}^c,
\qquad
\alpha_{\varrho}
 \le  
-1 
\text{ if } \varrho \subseteq A_{X}^c.
$$
\end{proposition}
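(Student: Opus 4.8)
The plan is to realise $\varphi\colon X'\to X$ as the restriction of the toric morphism $\Phi\colon Z_{\Sigma'}\to Z_\Sigma$ induced by the subdivision $\Sigma'\to\Sigma$, and to read off the discrepancy along $D_\varrho$ by adjunction for the two complete intersections $X\subseteq Z_\Sigma$ and $X'\subseteq Z_{\Sigma'}$. Writing $\mathcal D_i$ for the divisor of $g_i$ on $Z_\Sigma$ and $\mathcal D_i'$ for the divisor of its shift $g_i'$ on $Z_{\Sigma'}$, adjunction yields $\mathcal K_{X'}=(\mathcal K_{Z_{\Sigma'}}+\sum_i\mathcal D_i')|_{X'}$ and $\mathcal K_X=(\mathcal K_{Z_\Sigma}+\sum_i\mathcal D_i)|_X$. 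Since $\varrho$ is a ray of the subdivision of $\Sigma\sqcap\trop(X)$, the divisor $D_\varrho$ meets $X'$, and the $\TT$-action forces all components of $D_\varrho\cap X'$ to share one discrepancy; hence $\alpha_\varrho$ equals the coefficient along $D_\varrho$ of the ambient $\QQ$-divisor $(\mathcal K_{Z_{\Sigma'}}+\sum_i\mathcal D_i')-\Phi^*(\mathcal K_{Z_\Sigma}+\sum_i\mathcal D_i)$. The first task is therefore to compute this ambient coefficient combinatorially.

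First I would treat the canonical (toric) part. As $\mathcal K_{Z_{\Sigma'}}=-\sum_{\varrho'\in{\Sigma'}^{(1)}}D_{\varrho'}$ has coefficient $-1$ along the new divisor $D_\varrho$, this part contributes $\psi(v_\varrho)-1$, where $\psi$ is the piecewise linear support function of $-\mathcal K_{Z_\Sigma}$ normalised to take the value $1$ on the primitive generators of $\Sigma$; concretely $\psi(v_\varrho)$ is the evaluation of $v_\varrho$ against the facet of the polytope $B(-\mathcal K_X)$ cut out in the cone containing $\varrho$. Here one has to keep track of the fact that $\mathcal K_{Z_\Sigma}$ is only $\QQ$-Cartier, so the pullback $\Phi^*\mathcal K_{Z_\Sigma}$ is interpreted through this support function on the cone of $\Sigma$ subdivided by $\varrho$.

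Next comes the contribution of the relations, which is where the tropical refinement is used. Since $\varrho\subseteq\trop(X)$, the shift $g_i'$ is non-degenerate along $D_\varrho$, and its order of vanishing along the new divisor is the tropical evaluation of $g_i$ at $v_\varrho$, namely $\min_{b\in B(g_i)}\bangle{v_\varrho,(P^*)^{-1}b}$ after the normalisation that makes the order vanish along the old rays. Comparing this with the coefficient of $\Phi^*\mathcal D_i$ along $D_\varrho$, the difference $\mathcal D_i'-\Phi^*\mathcal D_i$ is governed precisely by the monomial factor removed in passing to the shift. Summing over $i$ replaces the individual Newton polytopes $B(g_i)$ by their Minkowski sum $B=B(g_1)+\cdots+B(g_s)$, and the normalisation against the old rays is exactly the translation by $-e_{\Sigma}$ appearing in the definition of $B_X$.

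Assembling the two contributions, a direct computation gives
\[
\alpha_\varrho+1 \ = \ -\min_{u\in B_X}\bangle{u,v_\varrho},
\]
the right-hand side being the value at $v_\varrho$ of the gauge functional of the polyhedron $A_X$ dual to $B_X$. If $\varrho\nsubseteq A_X$, the ray meets the boundary $\partial A_X$ in the point $v'_\varrho$, the minimum above is negative, and the gauge equals $\Vert v_\varrho\Vert/\Vert v'_\varrho\Vert$, which yields the asserted equality; as the new rays lie in $\trop(X)$, the conditions $\varrho\nsubseteq A_X$ and $\varrho\nsubseteq A_X^c$ coincide. If instead $\varrho\subseteq A_X^c$, the ray stays inside $A_X$, so $\min_{u\in B_X}\bangle{u,v_\varrho}\ge 0$ and thus $\alpha_\varrho\le -1$. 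The main obstacle I expect is the bookkeeping in the relations step: verifying that the shift construction makes the adjunction coefficients match the minima over the $B(g_i)$ exactly, together with the transversality and $\QQ$-Cartier care needed so that the ambient coefficient really computes the discrepancy on $X'$.
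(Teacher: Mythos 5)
Your proposal follows the same route as the paper's proof: compare adjunction representatives of the canonical classes on $X'\subseteq Z_{\Sigma'}$ and $X\subseteq Z_{\Sigma}$, reduce the discrepancy to the combinatorial identity $\alpha_\varrho+1=-\min_{u\in B_X}\langle u,v_\varrho\rangle$, and read off both cases from the gauge of the dual polyhedron $A_X$. Your endgame (including the remark that for the new rays, which lie on $\trop(X)$, containment in $A_X$ and in $A_X^c$ agree) is exactly the paper's conclusion.

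The genuine gap lies in how you justify that identity. You split the pullback as $\Phi^*\mathcal{K}_{Z_\Sigma}+\sum_i\Phi^*\mathcal{D}_i$ and evaluate each summand by its own support function. In the stated generality neither $\mathcal{K}_{Z_\Sigma}$ nor any $\mathcal{D}_i$ is $\QQ$-Cartier ($Z_\Sigma$ is not assumed $\QQ$-factorial), so your function $\psi$ with $\psi(v_{\varrho_0})=1$ on all old rays need not exist and the individual pullback coefficients are simply not defined. The combinatorial symptom is already visible in your formulas: your expression $\langle v_\varrho,(P^*)^{-1}b\rangle$ for $b\in B(g_i)$ is vacuous, since $B(g_i)$ lies in $Q^{-1}(\deg g_i)$ and not in ${\rm im}(P^*)$; each of your two ``contributions'' depends on a choice of lift $w\in P^{-1}(v_\varrho)$, and only their sum is intrinsic. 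Only the combination $\mathcal{K}_{Z_\Sigma}+\sum_i\mathcal{D}_i$ behaves well, because its restriction is $\mathcal{K}_X$, which is $\QQ$-Cartier since $X$ is Fano. This is precisely why the paper never splits: it works with the single representative $D_X^c=\sum_\varrho(-1+\nu_\varrho)D_X^\varrho$, restricts to the chart $X_\sigma$ first, and proves there that $D_X^c$ is rationally principal, given by the character $u=(P^*)^{-1}(\nu_{-\mathcal{K}_X}+\nu-e_\Sigma)$. Making that representation valid requires a step entirely absent from your plan: one must prove $\langle \nu_{-\mathcal{K}_X},e_{\varrho_0}\rangle=0$ for all rays $\varrho_0$ of $\sigma$, which the paper deduces from ampleness of $-\mathcal{K}_X$ via \cite[Prop.~4.1]{Ha2}; without it the coefficients of the pullback along the old divisors do not match the canonical representative, and the assembled formula does not follow. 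Relatedly, your ``concrete'' description of $\psi$ through facets of $B(-\mathcal{K}_X)$ conflates the two distinct classes $-\mathcal{K}_{Z_\Sigma}=Q(e_\Sigma)$ and $-\mathcal{K}_X=Q(e_\Sigma)-\sum_i\deg(g_i)$. Finally, your claim that the vanishing order of the shift $g_i'$ along $D_\varrho$ is exactly the tropical evaluation (in the paper's notation, that $\nu'_{i,\varrho}=0$, so that the exceptional coefficient of the upstairs representative is exactly $-1$) is the content of Lemmas~\ref{lem:shift2newton} and~\ref{lem:varinvert} and needs proof, not an appeal to non-degeneracy. In the $\QQ$-factorial complexity-one situation of the paper's application your splitting would be harmless, but the proposition is stated — and used in Theorem~\ref{thm:main} — for general Fano varieties with complete intersection Cox ring, where these middle steps fail as written.
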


We provide two Lemmas used  in the proof 
of Proposition~\ref{prop:discrep} and 
also later.
The first one describes the exponents 
of a shifted polynomial.

\begin{lemma}
\label{lem:shift2newton}
Consider a subdivision of fans $\Sigma' \to \Sigma$
with Cox constructions given by
$P' \colon \ZZ^{\Rho'} \to \ZZ^n$ and $P \colon \ZZ^{\Rho} \to \ZZ^n$,
a polynomial   
$g = \sum a_\nu T^\nu \in \KK[T_\varrho; \; \varrho \in \Rho]$
without monomial factors 
and a linear surjection
$F \colon \QQ^{\Rho'} \to \QQ^{\Rho}$ 
with $P' = P \circ F$.
Then there is a unique $e_F \in \ZZ^{\Rho'}$
such that the shift $g'$ is given as  
$$
g' 
\ =  \ 
T^{e_F}\sum a_\nu T^{F^{*}(\nu)}
\ \in \ 
\KK[T_{\varrho'}; \; \varrho' \in \Rho'].
$$ 
In particular the exponents of $g$ (the vertices of $B(g)$)
correspond to the exponents of $g'$ (the vertices of $B(g')$).
Moreover, for any exponent $\nu$ of $g$,
the corresponding exponent $\nu'$ of $g'$ 
satisfies $\nu'_\varrho = \nu_\varrho$ for all 
$\varrho \in \Rho \subseteq \Rho'$.
\end{lemma}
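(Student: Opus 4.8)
The plan is to reduce the statement to the fact that push-down and pull-back along the toric maps $p$ and $p'$ are nothing but monomial substitutions governed by the dual lattice maps, together with the single algebraic identity $(P')^{*}=F^{*}\circ P^{*}$ relating the two Cox constructions. Concretely, I would first make the push-down explicit on monomials. The homomorphism $p\colon\TT^{\Rho}\to\TT^{n}$ attached to $P$ acts on characters by $T^{m}\mapsto T^{P^{*}(m)}$, and $p'$ acts by $T^{m}\mapsto T^{(P')^{*}(m)}$, for $m\in\ZZ^{n}$. Since $g$ admits a push-down it is homogeneous for the $\Cl(Z_{\Sigma})$-grading, so all its exponents $\nu$ lie in a single coset $\nu_{0}+P^{*}(\ZZ^{n})$; as $P^{*}$ is injective, each $\nu$ determines a unique $m_{\nu}\in\ZZ^{n}$ with $\nu-\nu_{0}=P^{*}(m_{\nu})$. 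Writing $p_{*}(g)=\sum_{\nu}a_{\nu}T^{m_{\nu}+c}$ with $c\in\ZZ^{n}$ the common shift clearing monomial factors, the defining relation $T^{\mu}p^{*}(p_{*}(g))=g$ holds for a suitable $\mu\in\ZZ^{\Rho}$; in particular the exponents of $p_{*}(g)$ are genuine points of $\ZZ^{n}$.

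Next I would feed this into the shift. By definition $p'_{*}(g')=p_{*}(g)$, hence $g'=T^{\mu'}(p')^{*}(p_{*}(g))$ for the unique $\mu'\in\ZZ^{\Rho'}$ that clears monomial factors; since $(p')^{*}$ substitutes $T^{m}\mapsto T^{(P')^{*}(m)}$ and $(P')^{*}$ is integral, $g'$ automatically has exponents in $\ZZ^{\Rho'}$. Dualizing $P'=P\circ F$ gives $(P')^{*}=F^{*}\circ P^{*}$, whence $(P')^{*}(m_{\nu})=F^{*}(\nu-\nu_{0})=F^{*}(\nu)-F^{*}(\nu_{0})$. Substituting this into the exponents $\mu'+(P')^{*}(m_{\nu}+c)$ of $g'$ collects all $\nu$-independent terms into one vector $e_{F}$ and leaves exactly the summand $F^{*}(\nu)$, which yields $g'=T^{e_{F}}\sum_{\nu}a_{\nu}T^{F^{*}(\nu)}$. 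Uniqueness of $e_{F}$ is then immediate, since both $g'$ and the Laurent polynomial $\sum_{\nu}a_{\nu}T^{F^{*}(\nu)}$ are fixed and therefore so is the monomial $T^{e_{F}}$ between them.

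It remains to establish integrality of $e_{F}$ together with the coordinate statement $\nu'_{\varrho}=\nu_{\varrho}$ for the old rays, and here I would exploit the freedom in $F$: as $\Rho\subseteq\Rho'$, I choose $F$ to restrict to the identity on the old coordinates, i.e.\ $F(e_{\varrho})=e_{\varrho}$ for $\varrho\in\Rho$, so that $(F^{*}(\nu))_{\varrho}=\nu_{\varrho}$ for every old ray. Since neither $g$ nor $g'$ has monomial factors, $\min_{\nu}\nu_{\varrho}=0=\min_{\nu}\nu'_{\varrho}$, which forces $(e_{F})_{\varrho}=0$ and hence $\nu'_{\varrho}=\nu_{\varrho}$ on old rays. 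On the new rays I would secure $e_{F}\in\ZZ^{\Rho'}$ by choosing the preimages $F(e_{\varrho'})$ of the generators $v_{\varrho'}$ within $\ker(P)$ so that $F^{*}(\nu_{0})$ becomes integral; this is possible because $g$ is a nonzero homogeneous nonunit, so $\nu_{0}$ is not orthogonal to $\ker(P)$, while the differences $F^{*}(\nu)-F^{*}(\nu_{0})=(P')^{*}(m_{\nu})$ are already lattice vectors. The vertex correspondence is then formal: $\nu\mapsto e_{F}+F^{*}(\nu)$ is an injective affine-linear map carrying the exponents of $g$ bijectively onto those of $g'$, hence $B(g)$ affinely onto $B(g')$ and vertices to vertices.

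The step I expect to be the main obstacle is precisely this last one: the honest bookkeeping of the two competing monomial normalizations --- the shift $\mu$ concealed in the push-down and the factor-clearing shift $\mu'$ --- and, interlocked with it, the integrality of $e_{F}$ on the new-ray coordinates in the presence of torsion in $\Cl(X)$, where $P$ need not be surjective over $\ZZ$. The device that cuts through both is to fix $F$ as the identity on the old coordinates and to use the \emph{no monomial factors} normalization on $g$ and $g'$ simultaneously, which removes the ambiguity and delivers the integral, coordinate-preserving shift.
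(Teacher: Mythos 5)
Your proposal is correct and takes essentially the same route as the paper: both proofs unwind the push-down and the shift as monomial substitutions governed by the identity $(P')^{*}=F^{*}\circ P^{*}$, and both obtain the final statement ($\nu'_{\varrho}=\nu_{\varrho}$ on old rays) by specializing to an $F$ of the form $[E_{r},F']$ that restricts to the identity on the coordinates indexed by $\Rho$ --- legitimate because the exponent correspondence $\nu\mapsto\nu'$ is defined through the common push-down and hence independent of $F$. The only divergence is technical bookkeeping: the paper neutralizes the rationality of $F$ by composing with full-rank integral maps $\mu,\alpha$ satisfying $F\circ\mu=\alpha$ and comparing $(p\circ\alpha)^{*}(p_{*}(g))$ with $(p'\circ\mu)^{*}(p'_{*}(g'))$, whereas you track the coset structure $\nu\in\nu_{0}+P^{*}(\ZZ^{n})$ directly; moreover, your explicit treatment of the integrality of $e_{F}$ on the new-ray coordinates (using that $\deg g$ is non-torsion and adjusting the new columns of $F$ along $\ker(P)$, which matters exactly when $\Cl(X)$ has torsion and $P$ is not surjective over $\ZZ$) addresses a point that the paper's four-line proof passes over in silence.
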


\begin{proof}
Choose linear maps 
$\mu \colon \ZZ^{\Rho'} \to \ZZ^{\Rho'}$ and 
$\alpha \colon \ZZ^{\Rho'} \to \ZZ^\Rho$,
both of full rank, such that 
$F \circ \mu = \alpha$ holds.
Then $(p \circ \alpha)^*(p_*(g))$ 
equals $(p' \circ \mu)^*(p'_*(g'))$
which proves the displayed equality.
Choosing an $F$ given by a matrix $[E_r,F']$ 
with the $r \times r$ unit matrix $E_r$ 
gives the last statement.
\end{proof}

For a polynomial $g$, we denote by $\exp(g)$ the
set of its exponent vectors and, as earlier,
by $\mathcal{N}(B(g))$ the normal fan of its
Newton polytope.
Moreover, for a cone $\sigma \in \Sigma$ we denote 
by $\rq{\sigma} \in \rq{\Sigma}$ the unique cone 
with $P(\rq{\sigma}) = \sigma$.

\begin{lemma}
\label{lem:varinvert}
Let $h \in \KK[T_1,\ldots,T_n]$ and $e \in \ZZ^{\Rho}$ 
such that $g := T^e p^*(h)$ is a polynomial in 
$\KK[T_\varrho; \; \varrho \in \Rho]$ 
having no monomial factors. 
Consider a face $C \subseteq B(h)$,
the corresponding cone 
$\tau \in \mathcal{N}(B(h))$
and suppose that $\sigma \in \Sigma$ 
satisfies $\relint(\sigma) \subseteq \relint(\tau)$. 
Then we have
$$
\exp(g) \cap \rq{\sigma}^{\perp} 
\ = \ 
P^*(C \cap \exp(h)) + e . 
$$
\end{lemma}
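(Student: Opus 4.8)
The plan is to reduce the statement to a purely combinatorial fact about faces of the Newton polytope $B(h)$ once the expression $g = T^e p^*(h)$ has been made completely explicit on the level of exponents. Since $p \colon \TT^\Rho \to \TT^n$ is the torus homomorphism attached to $P \colon \ZZ^\Rho \to \ZZ^n$, its comorphism sends a character $T^u$ with $u \in \ZZ^n$ to $T^{P^*(u)}$. Writing $h = \sum_u c_u T^u$ with $u$ ranging over $\exp(h)$ and $c_u \ne 0$, I would therefore record $g = T^e p^*(h) = \sum_u c_u T^{e+P^*(u)}$. As $P^*$ is injective (a linear embedding with image $\Ker Q$), the assignment $u \mapsto e + P^*(u)$ is a bijection $\exp(h) \to \exp(g)$, so that $\exp(g) = P^*(\exp(h)) + e$. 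This already gives the "in particular'' clause; the content is to match the two distinguished subsets.

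Next I would translate the hypothesis that $g$ has no monomial factors. Every exponent of the polynomial $g$ satisfies $(e + P^*(u))_\varrho \ge 0$, and the absence of a monomial factor means that for each $\varrho \in \Rho$ the minimum of this coordinate over $\exp(h)$ is $0$. Using $\langle e + P^*(u), e_\varrho \rangle = e_\varrho + \langle u, v_\varrho \rangle$ with $v_\varrho = P(e_\varrho)$, this reads $e_\varrho = -\min_{u \in \exp(h)} \langle u, v_\varrho \rangle$ for all $\varrho$. Unwinding the left-hand side, I write $\rq{\sigma} = \cone(e_\varrho ;\, \varrho \in I)$ for the set $I$ of rays of $\sigma$, so $\rq{\sigma}^{\perp} = \{ w ;\, w_\varrho = 0 \text{ for all } \varrho \in I \}$. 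Hence an exponent $e + P^*(u)$ lies in $\rq{\sigma}^\perp$ exactly when $e_\varrho + \langle u, v_\varrho \rangle = 0$ for all $\varrho \in I$, which by the previous identity says $\langle u, v_\varrho \rangle = \min_{u'} \langle u', v_\varrho \rangle$; that is, $u$ lies in the face $F_\varrho \subseteq B(h)$ on which the linear form $v_\varrho$ attains its minimum, for every $\varrho \in I$.

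It then remains to identify $\bigcap_{\varrho \in I} F_\varrho$ with the prescribed face $C$, and this matching of the normal-fan data is where the care is needed. I would pick a point $v \in \relint(\sigma)$; since $\sigma = \cone(v_\varrho ;\, \varrho \in I)$ with the $v_\varrho$ the primitive ray generators, $v$ is a strictly positive combination $v = \sum_{\varrho \in I} \lambda_\varrho v_\varrho$. From the decomposition $\langle u - u', v \rangle = \sum_{\varrho} \lambda_\varrho \langle u - u', v_\varrho \rangle$ together with $\lambda_\varrho > 0$, both inclusions of the standard identity $\mathrm{argmin}_{B(h)} \langle \cdot, v \rangle = \bigcap_{\varrho \in I} F_\varrho$ follow, the reverse inclusion using that $\bigcap_{\varrho} F_\varrho \supseteq C \ne \emptyset$ (each $v_\varrho \in \sigma \subseteq \tau$ forces $F_\varrho \supseteq C$). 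Since $v \in \relint(\sigma) \subseteq \relint(\tau)$ and $\tau$ is the cone of $C$ in the normal fan $\mathcal{N}(B(h))$, the minimal face of $B(h)$ for $v$ is exactly $C$; hence $\bigcap_{\varrho \in I} F_\varrho = C$. Combining the steps, the exponents of $g$ lying in $\rq{\sigma}^\perp$ are precisely the $e + P^*(u)$ with $u \in C \cap \exp(h)$, which is the asserted equality.

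The principal obstacle, which I would flag explicitly, is the sign and duality convention for the normal fan: the computation forced by the no-monomial-factors hypothesis selects the minimization (inner normal) convention, and one must verify that this is the convention under which $\tau$ corresponds to $C$ and the generators $v_\varrho$ lie in $\tau$. With that convention fixed, the hypothesis $\relint(\sigma) \subseteq \relint(\tau)$ is exactly what turns the combinatorial identity into the identification $\bigcap_{\varrho} F_\varrho = C$, and everything else is bookkeeping with the pairing $\langle e + P^*(u), e_\varrho \rangle = e_\varrho + \langle u, v_\varrho \rangle$.
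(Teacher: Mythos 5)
Your proof is correct and rests on exactly the same ingredients as the paper's own argument: the exponent bijection $u \mapsto e + P^*(u)$, the translation of membership in $\rq{\sigma}^{\perp}$ into minimization of the linear forms $v_\varrho$ (with monomial-freeness pinning those minima to zero), and the hypothesis $\relint(\sigma) \subseteq \relint(\tau)$ identifying the common minimizing face as $C$. The difference is purely organizational — the paper proves the two inclusions separately (``$\subseteq$'' via a single form $P(\rq{v})$ with $\rq{v} \in \relint(\rq{\sigma})$, ``$\supseteq$'' via a sandwich estimate), whereas you package both into the polytopal identity $\bigcap_{\varrho \in \sigma^{(1)}} F_\varrho = C$ — so this is essentially the paper's proof.
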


\begin{proof}
To verify ``$\subseteq$'',
let $e_{g} \in \exp(g) \cap \rq{\sigma}^{\perp}$.
Then $e_{g} = P^{*}(u_{h})+e$ holds with some $u_{h} \in \exp(h)$.
Choose $\rq{v} \in \relint(\rq{\sigma})$.
Then, for any $u \in \exp(h)$, we have
$$
\bangle{u,P(\rq{v})}
\ = \ 
\bangle{P^{*}(u)+e,\rq{v}}
- 
\bangle{e,\rq{v}},
\qquad\qquad
\bangle{P^{*}(u)+e,\rq{v}} \ge 0.
$$
Moreover, $e_{g} \in \rq{\sigma}^{\perp}$
implies $\bangle{P^{*}(u_{h})+e,\rq{v}} = 0$.
Thus $u_{h} \in \exp(h)$ 
minimizes $P(\rq{v})$.
Since $P(\rq{v}) \in \relint(\tau)$ holds, 
we obtain $u_{h} \in C$.

For ``$\supseteq$'', let $u_{h} \in C \cap \exp(h)$.
Then $e_{g} := P^{*}(u_{h})+e$ lies in $\exp(g)$.
By monomial freeness of~$g$, we find that every 
ray $\varrho$ of $\sigma$ admits an
$\nu_\varrho \in \exp(g)$ with  
$\bangle{\nu_\varrho,e_\varrho} = 0$.
Write $\nu_\varrho = P^{*}(u_\varrho)+e$
with $u_\varrho \in \exp(h)$.
Then
$$
0 
= 
\bangle{\nu_\varrho, e_\varrho}
=
\bangle{u_\varrho, P(e_\varrho)}
+
\bangle{e,P(e_\varrho)}
\ge 
\bangle{u_{h},P(e_\varrho)}
+
\bangle{e,P(e_\varrho)}
=
\bangle{e_{g},e_\varrho}
\ge 
0
$$
holds, where the estimate in the middle is 
due to the fact that  $u_h$ minimizes 
$P(e_\varrho) \in \tau$.
In particular $e_{g}$ annihilates $\rq{\sigma}$.
\end{proof}

\begin{proof}[Proof of Proposition~\ref{prop:discrep}]
We write $\Rho \subseteq \Sigma$ and $\Rho' \subseteq \Sigma'$ 
for the respective sets of rays. 
The exceptional divisors of $\varphi \colon X' \to X$
are precisely the divisors $D_{X'}^{\varrho'}$ obtained 
as pullbacks of the toric divisors in $Z_\Sigma$ given 
by the rays $\varrho' \in \Rho' \setminus \Rho$; 
see~\cite[Prop.~3.14]{Ha2}.
We fix such $\varrho'$ 
and compute the discrepancy of $\varphi \colon X' \to X$ 
along $D_{X'}^{\varrho'}$.

Let $B := B(g_1) + \ldots + B(g_s)$ and
$B' := B(g_1') + \ldots + B(g_s')$ be the Minkowski
sums of the Newton polytopes $B(g_i)$ and $B(g_i')$.
The inverse image $P^{-1}(\varrho')$ is contained in a maximal 
cone $\tau \in \mathcal{N}(B(-\mathcal{K}_X) + B)$.
Let $\eta \in B(-\mathcal{K}_X) + B$ be the vertex 
corresponding to $\tau$.
Then $\eta = \nu_{-\mathcal{K}_X} + \nu$ with vertices 
$\nu_{-\mathcal{K}_X} \in B(-\mathcal{K}_X)$ and $\nu \in B$.
Moreover, we write $\nu' \in B'$ for the vertex corresponding 
to $\nu \in B$ in the sense of Lemma~\ref{lem:shift2newton}.

We compute the discrepancy of $\varphi \colon X' \to X$ 
along the divisor $D_{X'}^{\varrho'}$
using the following representatives of the 
canonical classes of $X$ and $X'$:
$$
D_{X}^{c}
\ := \ 
\sum_{\varrho \in \Rho} (-1 + \nu_{\varrho})D_X^{\varrho},
\qquad\qquad
D_{X'}^{c}
\ := \ 
\sum_{\varrho \in \Rho'} (-1 + \nu'_{\varrho})D_{X'}^{\varrho}.
$$
Note that by the definition of a tropical resolution of 
singularities, $D_{X'}^{c}$ is indeed a canonical divisor. 
Moreover, $D_{X'}^c - \varphi^*D_X^c$ is supported on 
the exceptional locus by Lemma~\ref{lem:shift2newton}.

Let $\sigma \in \Sigma$ be the cone with $\relint(\varrho')
\subseteq \relint(\sigma)$.
Then, on the corresponding chart $X_\sigma = X \cap Z_\sigma$,
the divisor $D_X^c$ is (rationally) principal.
More precisely, we claim that on  $X_\sigma$
this divisor has a presentation
$$
D_X^c 
\ = \ 
\frac{1}{m}\div({\chi^{mu}})
\qquad 
\text{with}
\quad 
u \ := \ (P^*)^{-1}(\nu_{-\mathcal{K}_X} +  \nu - e_\Sigma),
$$
where $m \in \ZZ_{>0}$ is such that $m u$ is integral
and $\chi^{mu}$ denotes the pullback of the toric character
function on $Z_\Sigma$ associated to $mu$.

To verify the claim, we first show 
$\bangle{\nu_{-\mathcal{K}_X}, e_{\varrho}} =0$ 
for all rays $\varrho$ of $\sigma$. 
Indeed, due to ampleness of the anticanonical class,
$B(-\mathcal{K}_X) \cap \relint(\rq{\sigma}^\perp \cap \gamma_\Rho)$ 
is non-empty, see~\cite[Prop.~4.1]{Ha2}, 
and thus contains some element $e^*$.
Because of $\relint(\varrho') \subseteq \relint(\sigma)$,
the preimage $P^{-1}(\varrho')$ contains a vector 
$\mu = \sum_{\varrho \in \sigma^{(1)}}{b_{\varrho} e_{\varrho}}$ 
with positive $b_{\varrho}$. We have $\bangle{e^*, \mu} =0$. 
Since $\nu_{-\mathcal{K}_X} \in B(-\mathcal{K}_X)$ 
is a minimizing vertex for $\mu$, we conclude 
$\bangle{\nu_{-\mathcal{K}_X}, \mu} = 0$
and hence $\bangle{\nu_{-\mathcal{K}_X}, e_{\varrho}} = 0$ 
for all rays $\varrho$ of $\sigma$. 
Consequently, on $X_\sigma$, we have 
$$ 
\frac{1}{m}\div(\chi^{mu})
\ = \
\sum_{\varrho \in \sigma^{(1)}}{\bangle{u, v_{\varrho}}D_X^{\varrho}} 
\ =  ß
\sum_{\varrho \in \sigma^{(1)}}{\bangle{P^*u, e_{\varrho}}D_X^{\varrho}} 
\ = \
\sum_{\varrho \in \sigma^{(1)}}{\bangle{\nu-e_\Sigma, e_{\varrho}}D_X^{\varrho}}.
$$

Using the presentation of $D_X^c$ on $X_\sigma$ 
just obtained, we see that 
the discrepancy $a_{\varrho'}$ of $\varphi \colon X' \to X$ 
along $D_{X'}^{\varrho'}$ is the multiplicity of 
$D_{X'}^{c} - \div(\chi^u)$ along $D_{X'}^{\varrho'}$ 
and thus is concretely given by
$$
a_{\varrho'} 
\ = \ 
-1 + \nu'_{\varrho'} - \bangle{u, v_{\varrho'}}.
$$

We show that $\nu'_{\varrho'} = 0$ holds. 
First note that $\nu = \nu_1+\ldots+\nu_s$,
where $\nu_i$ is an exponent vector of $g_i$. 
Let $\nu'_i$ be the corresponding 
exponent vector of $g'_i$ in the sense of 
Lemma~\ref{lem:shift2newton}.
Then $\nu'= \nu'_1+\ldots+ \nu'_s$. 
We claim that $\nu'_{i,\varrho'}=0$ for all $i=1,\ldots,s$. 
By definition, $\nu'_i$ lies in the face of $B(g'_i)$ 
which is cut out by $P'^{-1}(\varrho')$. 
Consequently, the corresponding exponent vector of
the pushed down equation $p_*(g_i)$ lies in the face of 
$B(p_*(g_i))$ that is cut out by $\varrho'$.
Lemma~\ref{lem:varinvert} applied to $\varrho'$ and $g'_i$ 
yields that $\nu'_i$ is orthogonal to ${\rq{\varrho}}'$, i.e.,~we
have $\nu_{i,\varrho'}=0$. 

To conclude the proof we have to evaluate $\bangle{u,v_{\varrho'}}$.
For this, consider the maximal cone 
$\sigma^\sharp \in \mathcal{N}(B_X)$ 
corresponding to the vertex $u \in B_X$. 
Then we have $\varrho' \subseteq \sigma^{\sharp}$
and the bounding halfspace
$$ 
H 
\ := \ 
\{v \in \QQ^n; \; \bangle{u,v} \geq -1\}
\ \subseteq \
\QQ^n
$$
of $A_X$ defined by $u$
satisfies $\sigma^{\sharp} \cap A_X = \sigma^{\sharp} \cap  H$. 
If the ray $\varrho'$ is not contained in $A_{X}$, 
then its leaving point $v'_{\varrho'}$ is the 
intersection point of $\varrho'$ and $\partial H$. 
In this case, we obtain
$$
\bangle{u, v_{\varrho'}} 
\ = \
\frac{\Vert v_{\varrho'} \Vert}{\Vert v'_{\varrho'} \Vert}
\bangle{u, v'_{\varrho'}}
\ = \ 
- \frac{\Vert v_{\varrho'} \Vert}{\Vert v'_{\varrho'} \Vert},
\qquad
\qquad
a_{\varrho'}
\ = \
-1 + 
\frac{\Vert v_{\varrho'} \Vert}{\Vert v'_{\varrho'} \Vert}.
$$
If $\varrho' \subseteq A_X$ holds, then $\varrho'$ is contained in
$H$.
This means 
$\bangle{u, v}  \ge  -1$ for all $v \in \varrho'$.
It follows $\bangle{u, v_{\varrho'}} \geq 0$ 
and thus $a_{\varrho'} \le -1$. 
\end{proof}

\begin{proof}[Proof of Theorem~\ref{thm:main}, 
Assertions~(ii) to~(v)]
We prove the ``if'' parts first; recall that for them 
we only require the existence of one tropical resolution. 
Let $\varphi \colon X' \to X$ be such a resolution, given 
by a subdivision of fan $\Sigma' \to \Sigma$. 
Given a ray $\varrho \in \Sigma'$ not belonging to 
$\Sigma$, we have to show that the discrepancy 
$a_\varrho$ satisfies the desired bounds.
For~(ii), let $A_X^c$ be bounded. 
Then $\varrho \nsubseteq A_X^c$ and
Proposition~\ref{prop:discrep} gives 
$a_\varrho > -1$.
In assertions~(iii) to~(v) observe 
that $A_X^c$ is bounded and thus 
$\varrho \nsubseteq A_X^c$. 
In~(iii), the intersection point of 
$\varrho$ and $\partial \varepsilon A_X^c$ 
is $\varepsilon v'_\varrho$. 
By assumption, $v_\varrho \notin \varepsilon A_X^c$.
Thus thus $\Vert v_\varrho\Vert > \varepsilon \Vert v'_\varrho \Vert$
and Proposition~\ref{prop:discrep} gives 
$a_\varrho > -1 + \varepsilon$.
Similarly, for~(iv) and (v), the intersection point 
of $\varrho$ and $\partial A_X^c$ is $v'_\varrho$,
and Proposition~\ref{prop:discrep} gives 
$a_\varrho \ge 0$ in~(iv) and $a_\varrho > 0$ in~(v).

We turn to the ``only if'' parts. Here we required that 
$X$ is strongly tropically resolvable. 
For~(ii), assume that $A_X^c$ is not bounded.
Then $A_X^c$ contains a ray $\varrho$. 
Let $X'\rightarrow X$ be a tropical resolution 
with $\varrho \in \Sigma'$. 
Then $a_\varrho \leq -1$ by Proposition~\ref{prop:discrep},
a contradiction. 
In Assertions~(iii) to~(v), $A_X^c$ is bounded due 
to~(ii).
For~(iii), assume that $\varepsilon A_X^c$ contains 
an integral point $v \neq 0$
and set $\varrho := \cone(v)$.  
Let $X'\rightarrow X$ a tropical resolution with 
$\varrho \in \Sigma'$. 
Then $\varrho$ and $\varepsilon \partial A_X^c$ intersect 
at $\varepsilon v'_\varrho$. 
Because $v_\varrho \in \varepsilon A_X^c$,
we have $\Vert v_\varrho \Vert \le \varepsilon \Vert v'_\varrho \Vert$
and Proposition~\ref{prop:discrep} gives 
$a_\varrho \le -1 + \varepsilon$, a contradiction. 
Similarly, for~(iv) and~(v), assume that $A_X^c$ 
contains an (inner) integral point $v \neq 0$ generating 
a ray $\varrho$ that does not belong to $\Sigma$. 
Let $X'\rightarrow X$ a tropical resolution with $\varrho \in \Sigma'$. 
Proposition~\ref{prop:discrep} implies $a_\varrho < 0$ in case~(iv)
and $a_\varrho \le 0$ in case~(v), a contradiction. 
\end{proof}

\begin{remark}
The assignment $\eta \mapsto \cone(\eta)$ defines an 
order-preserving bijection between the anticanonical 
complex $A_X^c$ and the fan $\Sigma \sqcap \trop(X)$.
\end{remark}

We conclude the section with some observations that may be drawn 
for the intersection of $A_X$ with the tropical lineality space.

\begin{definition}
Let $\trop_0(X) \subseteq \trop(X)$ 
denote the lineality space of the 
tropical variety.
The \emph{lineality part} of the anticanonical
complex is the polyhedral complex 
$A_{X,0}^c := A_X \sqcap \trop_0(X)$.
\end{definition}

\begin{proposition}
Let $X$ be a log terminal Fano variety
and let $\vert A_{X,0}^c \vert$ denote 
the support of the lineality part
of the anticanonical complex $A_X^c$.
\begin{enumerate}
\item
$\vert A_{X,0}^c \vert$ is a full dimensional  
polytope in $\trop_0(X)$ having the origin 
as an interior point.
\item
If $X$ is $\varepsilon$-log terminal
then the origin is the only lattice 
point of $\varepsilon \vert A_{X,0}^c \vert$.
\item
If $X$ is canonical then the origin is 
the only interior lattice 
point of $\vert A_{X,0}^c \vert$.
\item
If $X$ is  terminal
then the origin is the only lattice 
point of $\vert A_{X,0}^c \vert$.
\end{enumerate}
\end{proposition}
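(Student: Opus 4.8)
The plan is to deduce each assertion from the matching part of Theorem~\ref{thm:main} by comparing the lattice points of the central slice $\vert A_{X,0}^c\vert = A_X\cap\trop_0(X)$ with those of the full complex $A_X^c$. The first thing I would record is that for the minimal toric embedding one has $\trop(X)\subseteq\vert\Sigma\vert$: a cone $\sigma\in\Sigma$ survives exactly when $\relint(\sigma)$ meets $\trop(X)$, so any $p\in\trop(X)$ lies in the $\Sigma$-cone whose relative interior contains it. Hence the support of the anticanonical complex is $\vert A_X^c\vert = A_X\cap\trop(X)$. Since $\trop_0(X)\subseteq\trop(X)$ is a linear subspace and $\trop(X)$ is scale invariant, this yields, for every $\varepsilon\in(0,1]$, the clean identity
\[
\varepsilon\vert A_{X,0}^c\vert \ = \ \varepsilon A_X\cap\trop_0(X) \ = \ \bigl(\varepsilon A_X\cap\trop(X)\bigr)\cap\trop_0(X) \ = \ \varepsilon\vert A_X^c\vert\cap\trop_0(X),
\]
i.e.\ the lattice points of any scaling of the lineality polytope are precisely the lattice points of the same scaling of $A_X^c$ lying in $\trop_0(X)$. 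For~(i) I would then use that log terminality makes $A_X^c$ bounded by Theorem~\ref{thm:main}(ii), hence its subset $A_X\cap\trop_0(X)$ is bounded and so $\vert A_{X,0}^c\vert$ is a polytope; full dimensionality and the interior point follow from $0\in\relint(A_X)=\mathrm{int}(A_X)$ by intersecting a $\QQ^n$-neighbourhood of $0$ with the subspace $\trop_0(X)$.

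Assertion~(ii) is then immediate from the displayed identity: a lattice point of $\varepsilon\vert A_{X,0}^c\vert$ is a lattice point of $\varepsilon\vert A_X^c\vert$, which is the origin by Theorem~\ref{thm:main}(iii). For~(iii) the extra ingredient I would prove is that the relative interior of the slice sits inside the genuine interior of $A_X$, namely $\relint_{\trop_0(X)}(A_X\cap\trop_0(X))\subseteq\mathrm{int}(A_X)$. Indeed, $A_X$ is cut out by the halfspaces $\{\bangle{u,\cdot}\ge -1\}$ for $u\in B_X$; those $u$ with $u\vert_{\trop_0(X)}=0$ read $0\ge -1$ and impose nothing on $\trop_0(X)$, so the slice is governed by the remaining $u$. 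A relative interior point $v$ of the slice must satisfy $\bangle{u,v}>-1$ for all of these (otherwise $v$ lies on a supporting hyperplane of the slice), while for the discarded $u$ one has $\bangle{u,v}=0>-1$ trivially. Thus $\bangle{u,v}>-1$ for every $u\in B_X$, i.e.\ $v\in\mathrm{int}(A_X)$, and then a $\QQ^n$-neighbourhood of $v$ stays in $A_X$, so $v\in\relint(A_X^c)$. Consequently an interior lattice point $v\neq 0$ of $\vert A_{X,0}^c\vert$ would be a nonzero lattice point in $\relint(A_X^c)$, contradicting canonicity via Theorem~\ref{thm:main}(iv).

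For~(iv) the displayed identity together with Theorem~\ref{thm:main}(v) shows that every lattice point of $\vert A_{X,0}^c\vert$ lies in $\{0\}\cup\{v_\varrho;\ \varrho\in\Sigma^{(1)}\}$, so it remains to exclude that a primitive generator $v_\varrho$ lies in $\trop_0(X)$. This is the step I expect to be the main obstacle: a generator sits on $\partial A_X$, so the convexity argument used for~(iii) does not apply, and a priori a free Cox generator would place a $v_\varrho$ inside $\trop_0(X)$ and produce a nonzero vertex of the lineality polytope. I would settle it using the explicit form of the minimal toric embedding for the varieties in question (Section~\ref{sec:cpl1-antican}), where $\trop(X)$ decomposes as $\trop_0(X)$ together with the pointed ``arms'' and every $v_\varrho$ coming from a relation lies strictly inside an arm, meeting $\trop_0(X)$ only in the origin. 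Granting this structural input, no nonzero $v_\varrho$ lies in $\trop_0(X)$ and the origin is the only lattice point of $\vert A_{X,0}^c\vert$. In summary, I anticipate parts~(i)--(iii) to be routine once the support identity $\vert A_X^c\vert=A_X\cap\trop(X)$ and the slice--interior inclusion are established, with the genuine difficulty concentrated in controlling the generators on $\trop_0(X)$ for~(iv).
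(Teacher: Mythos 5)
The paper states this proposition without proof (it is introduced merely as a collection of ``observations''), so there is no authors' argument to measure yours against; judged on its own merits, your treatment of (i)--(iii) is correct. The support identity $\vert A_X^c\vert = A_X\cap\trop(X)$ does follow from completeness of the normal fan $\Sigma_c$ together with Tevelev's criterion, the scaling identity is then immediate since $\trop(X)$ is scale invariant and $\trop_0(X)$ is a linear subspace, and your inclusion $\relint_{\trop_0(X)}\bigl(A_X\cap\trop_0(X)\bigr)\subseteq \mathrm{int}(A_X)$ is argued correctly: a linear form $u\in B_X$ vanishing on $\trop_0(X)$ imposes nothing on the slice, and for the remaining $u$ the value $\bangle{u,v}=-1$ at a relative interior point $v$ would force the whole slice into the hyperplane $\bangle{u,\cdot}=-1$, which is impossible because the slice contains $0$. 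Combined with the ``only if'' halves of Theorem~\ref{thm:main} --- which is where strong tropical resolvability enters, exactly as you use it --- parts (i), (ii), (iii) follow.

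Part (iv) is where your proof breaks, and the break is not repairable, because the structural input you propose to grant yourself is false. In Construction~\ref{constr:RAPdown}, the columns $v_k$ of $P$ attached to the free variables $S_k$ (present whenever $m\ge 1$) have vanishing upper $r$ entries, hence are primitive ray generators of $\Sigma$ lying inside $\lambda=\trop_0(X)$; by Corollary~\ref{thm:cpl1antican} (or by the first proposition of Section~\ref{sec:discr}, applicable since being Fano means $-\mathcal{K}_X\in\relint(\Mov)$ by Remark~\ref{rem:fanoRAP}) each such $v_k$ is a vertex of $\vert A_{X,0}^c\vert$, i.e.\ a nonzero lattice point of the lineality part. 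This situation does occur among terminal varieties: Nos.~29--40 of Theorem~\ref{thm:classif} all have exactly one free generator (in No.~29 the variable $T_5$ does not occur in the relation $T_1T_2+T_3^3+T_4^2$), so they are terminal, of complexity one, with $m=1$, and their lineality parts contain the nonzero lattice vertex $P(e_5)$. Consequently no argument can establish (iv) as literally stated; it holds only under the additional hypothesis that no ray of $\Sigma$ lies in $\trop_0(X)$ (in the complexity one setting, $m=0$, which is the only situation in which the written-out proofs of Section~\ref{sec:terminal3folds} invoke it), or with the conclusion weakened, in parallel with Theorem~\ref{thm:main}(v), to: $0$ and the generators $v_\varrho$ with $\varrho\subseteq\trop_0(X)$ are the only lattice points of $\vert A_{X,0}^c\vert$. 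Your reduction of (iv) to the claim ``no nonzero $v_\varrho$ lies in $\trop_0(X)$'' was exactly the right move --- and exactly there sits a genuine defect of the proposition itself, which you correctly sensed as the main obstacle but then argued away instead of recognizing it as a counterexample.
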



\section{Fano varieties with torus action of complexity one}
\label{sec:fanocpl1}

We take a closer look at Fano varieties with a torus 
action of complexity one. 
First we recall the approach to rational varieties with 
torus action of complexity one provided by~\cite{HaSu,HaHe}.
The Cox rings of these varieties are precisely the rings 
obtained in the following way.

\begin{construction}
\label{constr:RAPdown}
Fix $r \in \ZZ_{\ge 1}$, a sequence 
$n_0, \ldots, n_r \in \ZZ_{\ge 1}$, set 
$n := n_0 + \ldots + n_r$, and fix  
integers $m \in \ZZ_{\ge 0}$ and $0 < s < n+m-r$.
The input data are 
\begin{itemize}
\item 
a matrix $A := [a_0, \ldots, a_r]$ 
with pairwise linearly independent 
column vectors $a_0, \ldots, a_r \in \KK^2$,
\item 
an integral block matrix $P$ of size 
$(r + s) \times (n + m)$, the columns 
of which are pairwise different primitive
vectors generating $\QQ^{r+s}$ as a cone.
\begin{eqnarray*}
P
& = & 
\left[ 
\begin{array}{cc}
L & 0 
\\
d & d'  
\end{array}
\right],
\end{eqnarray*}
where $d$ is an $(s \times n)$-matrix, $d'$ an $(s \times m)$-matrix 
and $L$ an $(r \times n)$-matrix built from tuples 
$l_i := (l_{i1}, \ldots, l_{in_i}) \in \ZZ_{\ge 1}^{n_i}$ 
as follows
\begin{eqnarray*}
L
& = & 
\left[
\begin{array}{cccc}
-l_0 & l_1 &   \ldots & 0 
\\
\vdots & \vdots   & \ddots & \vdots
\\
-l_0 & 0 &\ldots  & l_{r} 
\end{array}
\right].
\end{eqnarray*}
\end{itemize}
Consider the polynomial ring 
$\KK[T_{ij},S_k]$ in the variables 
$T_{ij}$, where 
$0 \le i \le r$, $1 \le j \le n_i$,
and $S_k$, where $1 \le k \le m$.
For every $0 \le i \le r$, define a monomial
\begin{eqnarray*}
T_i^{l_i} 
&  := &
T_{i1}^{l_{i1}} \cdots T_{in_i}^{l_{in_i}}.
\end{eqnarray*}
Denote by $\mathfrak{I}$ the set of 
all triples $I = (i_1,i_2,i_3)$ with 
$0 \le i_1 < i_2 < i_3 \le r$ 
and define for any $I \in \mathfrak{I}$ 
a trinomial 
$$
g_I
\ := \
g_{i_1,i_2,i_3}
\ := \
\det
\left[
\begin{array}{ccc}
T_{i_1}^{l_{i_1}} & T_{i_2}^{l_{i_2}} & T_{i_3}^{l_{i_3}}
\\
a_{i_1} & a_{i_2} & a_{i_3}
\end{array}
\right].
$$
Let $P^*$ denote the transpose of $P$,
consider the factor group 
$K := \ZZ^{n+m}/\rm{im}(P^*)$
and the projection $Q \colon \ZZ^{n+m} \to K$.
We define a $K$-grading on 
$\KK[T_{ij},S_k]$ by setting
$$ 
\deg(T_{ij}) 
 \ := \ 
Q(e_{ij}),
\qquad
\deg(S_{k}) 
 \ := \ 
Q(e_{k}).
$$
Then the trinomials $g_I$ just introduced 
are $K$-homogeneous, all of the same degree.
In particular, we obtain a $K$-graded 
factor ring  
\begin{eqnarray*}
R(A,P)
& := &
\KK[T_{ij},S_k; \; 0 \le i \le r, \, 1 \le j \le n_i, 1 \le k \le m] 
\ / \
\bangle{g_I; \; I \in \mathfrak{I}}.
\end{eqnarray*}
\end{construction}

\begin{remark}
\label{rem:ci}
The $K$-graded ring $R(A,P)$ of Construction~\ref{constr:RAPdown}
is a complete intersection: with $g_i := g_{i,i+1,i+2}$ 
we have 
$$ 
\bangle{g_I; \; I \in \mathfrak{I}}
\ = \ 
\bangle{g_0,\ldots,g_{r-2}},
\qquad\quad
\dim(R(A,P)) \ = \ n+m-(r-1).
$$
We can always assume that $P$ is \emph{irredundant} 
in the sense that $l_{i1} + \ldots + l_{in_i} \ge 2$ holds 
for $i = 0,\ldots, r$; note that a redundant $P$ allows 
the elimination of variables in $R(A,P)$.
\end{remark} 

\begin{remark}
\label{rem:fanoRAP}
The \emph{anticanonical class} of the $K$-graded 
ring $R(A,P)$ from Construction~\ref{constr:RAPdown}
is 
$$ 
\kappa(A,P)
\ := \ 
\sum_{i,j} Q(e_{ij})
\ + \ 
\sum_k Q(e_k)
\ - \ 
(r-1) \sum_{j=0}^{n_0} l_{0j} Q(e_{0j})
\ \in \ 
K
$$
and the \emph{moving cone} of $R(A,P)$ in $K_\QQ$ is
$$ 
\Mov(A,P) 
\ := \ 
\bigcap_{i,j} \cone(Q(e_{uv},e_{t}; \; (u,v) \ne (i,j))  
\ \cap \ 
\bigcap_{k} \cone(Q(e_{uv},e_{t}; \; t \ne k).
$$
The $K$-graded ring $R(A,P)$ is the Cox ring of 
a Fano variety if and only if $\kappa(A,P)$ belongs
to the relative interior of $\Mov(A,P)$. 
\end{remark}

\begin{construction}
\label{constr:RAPFano}
Consider the $K$-graded ring $R(A,P)$ 
of Construction~\ref{constr:RAPdown}
and assume that $\kappa(A,P)$ lies in the 
relative interior of $\Mov(A,P)$.
Then the $K$-grading on $\KK[T_{ij},S_k]$
defines an action of the quasitorus 
$H := \Spec \; \KK[K]$ on $\b{Z} := \KK^{n+m}$
leaving $\b{X} := V(g_I; \; I \in \mathfrak{I}) \subseteq \b{Z}$ 
invariant. Consider 
$$ 
\rq{Z}_c 
\ := \ 
\{z \in \b{Z}; \; 
f(z) \ne 0 \text{ for some } 
f \in \KK[T_{ij},S_k]_{ \nu \kappa(A,P)}, \, 
\nu \in \ZZ_{>0} \}
\ \subseteq \ 
\b{Z},
$$
the set of $H$-semistable points with 
respect to the weight $\kappa(A,P)$.
Then $\rq{X} := \b{X} \cap \rq{Z}_c$ is 
an open $H$-invariant set in $\b{X}$ 
and we have a commutative diagram
$$ 
\xymatrix{
{\rq{X}}
\ar[r]
\ar[d]_{\quot H}
&
{\rq{Z}_c}
\ar[d]^{\quot H}
\\
X(A,P)
\ar[r]
&
Z_c}
$$
where $X(A,P)$ is a Fano variety with torus action 
of complexity one,
$Z_c := \rq{Z}_c \quot H$ is a toric Fano variety,
the downward maps are characteristic spaces and
the lower horizontal arrow is a closed embedding. 
We have
$$ 
\dim(X(A,P)) = s+1,
\qquad
\Cl(X(A,P)) \ \cong \ K,
$$
$$
-\mathcal{K}_X \ = \ \kappa(A,P),
\qquad
\mathcal{R}(X) \ \cong \ R(A,P).
$$
\end{construction}

By the results of~\cite{HaSu,HaHe} every normal rational 
Fano variety with a torus action of complexity one arises 
from this construction. 

\begin{remark}
\label{remark:admissibleops}
The following elementary column and row operations 
on the defining matrix $P$ do not change 
the isomorphy type of the associated Fano variety $X(A,P)$;
we call them \emph{admissible operations}:
\begin{enumerate}
\item
swap two columns inside a block
$v_{ij_1}, \ldots, v_{ij_{n_i}}$,
\item
swap two whole column blocks
$v_{ij_1}, \ldots, v_{ij_{n_i}}$
and $v_{i'j_1}, \ldots, v_{i'j_{n_{i'}}}$,
\item
add multiples of the upper $r$ rows
to one of the last $s$ rows,
\item
any elementary row operation among the last $s$
rows,
\item
swap two columns inside the $d'$ block.
\end{enumerate}
The operations of type (iii) and (iv) do not change
the associated ring $R(A,P)$, whereas the 
types (i), (ii), (v)
correspond to certain renumberings of the variables
of $R(A,P)$ keeping the (graded) isomorphy type.
\end{remark}

We now discuss the resolution of singularities in this 
setting. 
The references for complete proofs 
are~\cite[Sec.~3.4.4]{ArDeHaLa} and~\cite{Hug}.
A local version of our desingularization using another 
approach was given in~\cite{LiSu}.

\begin{construction}
\label{rem:cpl1isstr}
Consider the setting of Construction~\ref{constr:RAPFano}.
Let $\gamma \subseteq \QQ^{n+m}$ be the positive orthant
and for a face $\gamma_0 \preceq \gamma$ let 
$\gamma_0^* := \gamma_0^\perp \cap \gamma$ be the complementary
face. Then the fan of $Z_c = {\rq{Z}_c \quot H}$ is
$$ 
\Sigma_c
\ = \ 
\{P(\gamma_0^*); \; 
\gamma_0 \preceq \gamma, \
\kappa(A,P) \in \relint(Q(\gamma_0))\}.
$$
In particular, the primitive generators of the rays of $\Sigma$ 
are precisely the columns $v_{ij}$ and $v_k$ of the matrix $P$.
With $P_0 = [L,0]$ and $P_1 = [E_r,0]$, 
where $E_r$ is the $r \times r$ unit matrix,
we have a commutative diagram
$$
\xymatrix{
{\ZZ^{n+m}}
\ar[rr]^{P}
\ar[dr]_{P_0}
&&
{\ZZ^{r+s}}
\ar[dl]^{P_1}
\\
&
{\ZZ^r}
&
}
$$
The torus $T$ acting on $X(A,P)$ is the subtorus $T \subseteq \TT^{r+s}$
corresponding to the sublattice 
$\ker(P_1) = 0 \times \ZZ^s \subseteq \ZZ^{r+s}$.
Now, let $e_1,\ldots,e_r \in \ZZ^r$ be the canonical basis
vectors, set
$$ 
\varrho_0
\ := \ 
\cone(-e_1 - \ldots -e_r),
\qquad
\varrho_i 
\ := \ 
\cone(e_i),
\quad
1 \le i \le r,
$$
and consider the fan 
$\Delta(r) := \{0,\varrho_0, \ldots, \varrho_r\}$
in $\ZZ^r$.
Note that $P_1$ sends an $ij$-th column $v_{ij}$ 
of $P$ into the ray $\varrho_i$ and the 
columns $v_k$ to zero.
The tropical variety of $X \cap \TT^n \subseteq Z_c$
is then given as 
$$
\trop(X) 
\ = \ 
\bigcup_{i=0}^r P_1^{-1}(\varrho_i)
\ \subseteq \
\QQ^{r+s}.
$$
The minimal toric ambient variety $Z_\Sigma \subseteq Z_c$ 
of $X \subseteq Z_c$ is the open toric subvariety 
having as closed orbits the minimal orbits of $Z_c$
intersecting $X$.
The fan $\Sigma$ of $Z_\Sigma$ is generated by the cones
of $\Sigma_c$ with $\relint(\sigma) \cap \trop(X) \ne \emptyset$.
Set
$$
\Sigma'
\ := \
\Sigma \sqcap \trop(X)
\ = \
\{
\sigma \cap P_1^{-1}(\varrho_i); \; \sigma \in \Sigma, \, 0 \le i \le r
\}.
$$
Then we have a map of fans $\Sigma' \to \Sigma$
and  the associated birational toric morphism 
$Z_{\Sigma'} \to Z_\Sigma$ fits into a commutative diagram
$$ 
\xymatrix{
X' 
\ar[r]
\ar[d]
&
Z_{\Sigma'}
\ar[d]
\\
X \ar[r]
&
Z_\Sigma
}
$$
where  $X' \subseteq Z_{\Sigma'}$ 
is the proper transform, i.e.,~the 
closure of $X \cap \TT^{r+s}$ in $Z_{\Sigma'}$.
Any regular subdivision $\Sigma'' \to \Sigma'$ 
provides a toric resolution $Z_{\Sigma''} \to Z_{\Sigma'}$ 
and induces a resolution $X'' \to X'$.
\end{construction}

The resulting varieties $X'$ and $X''$ arising 
in this construction are again normal rational 
varieties with torus action of complexity one
and have Cox rings of the form $R(A,P')$ and 
$R(A,P'')$ as presented in Construction~\ref{constr:RAPdown},
see~\cite[Thm.~3.4.4.9]{ArDeHaLa}.
In particular they are Mori dream spaces and 
we obtain the following.

\begin{proposition}
\label{prop:cpl1isstr}
Let $X = X(A,P)$ be a Fano variety as in 
Construction~\ref{constr:RAPFano}.
Then $X$ is strongly tropically resolvable.
\end{proposition}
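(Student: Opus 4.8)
The plan is to verify the definition of \emph{strongly tropically resolvable} (Definition~\ref{def:tropres}) directly for $X = X(A,P)$, using the explicit combinatorial description of the tropical variety and the resolution procedure laid out in Construction~\ref{rem:cpl1isstr}. Concretely, we must show that every subdivision $\widehat\Sigma \to \Sigma \sqcap \trop(X)$ admits a regular refinement that yields a tropical resolution of singularities, i.e. a subdivision $\Sigma'' \to \widehat\Sigma$ for which the proper transform $X''$ is \emph{smooth} and has complete intersection Cox ring given by the shifts $g_i''$ of the relations $g_i$. First I would recall from Construction~\ref{rem:cpl1isstr} that $\trop(X) = \bigcup_{i=0}^r P_1^{-1}(\varrho_i)$, that the rays of $\Sigma$ are exactly the columns of $P$, and that $\Sigma' = \Sigma \sqcap \trop(X)$ has the explicit description $\{\sigma \cap P_1^{-1}(\varrho_i)\}$, so that any subdivision of $\Sigma \sqcap \trop(X)$ is a fan supported on these pieces.

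The key steps are as follows. Given an arbitrary subdivision $\widehat\Sigma$ of $\Sigma' = \Sigma \sqcap \trop(X)$, I would invoke the standard toric fact that any fan admits a regular (in particular simplicial) refinement $\Sigma''$ without adding new rays being forbidden, obtained by stellar subdivisions; since toric resolutions exist, one may take $\Sigma''$ so that the associated toric variety $Z_{\Sigma''}$ is smooth. This gives a toric resolution $Z_{\Sigma''} \to Z_{\widehat\Sigma}$. The essential point, already asserted in the text surrounding Construction~\ref{rem:cpl1isstr} and in~\cite[Thm.~3.4.4.9]{ArDeHaLa}, is that because $\Sigma''$ still refines $\trop(X)$, the proper transform $X'' \subseteq Z_{\Sigma''}$ is again a variety of the form $X(A,P'')$ with Cox ring $R(A,P'')$ presented by the shifted trinomials, and crucially that its singularities are controlled by the same local toric data. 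I would then argue that smoothness of the ambient $Z_{\Sigma''}$ together with the complete intersection structure of $R(A,P'')$ forces $X''$ to be smooth: the defining equations remain a regular sequence and, after the tropical refinement, each chart meets $X''$ transversally in a smooth subvariety. Thus $\Sigma'' \to \widehat\Sigma$ provides the required tropical resolution, and since $\widehat\Sigma$ was arbitrary, $X$ is strongly tropically resolvable.

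The main obstacle I anticipate is establishing that smoothness of $X''$ genuinely follows once the ambient toric fan is made regular while still refining $\trop(X)$, rather than merely that $X''$ has a complete intersection Cox ring. The content here is that refining along the tropical variety is exactly what prevents the relations from acquiring singular behaviour: away from $\trop(X)$ the subvariety $X$ is a smooth complete intersection inside the torus orbits it meets, and the tropical refinement ensures that the newly introduced toric divisors meet $X''$ transversally. I would make this precise by working chart-by-chart, using Lemma~\ref{lem:shift2newton} to track the exponents of the shifted relations $g_i''$ and checking that the Jacobian of $(g_1'',\ldots,g_s'')$ has full rank along each stratum; the fact that the refinement subdivides $\trop(X)$ guarantees that within each cone of $\Sigma''$ at most the ``relevant'' monomials survive, so that the local equations reduce to a smooth coordinate subspace description. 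This transversality check, and the careful bookkeeping needed to pass from the statement in~\cite[Thm.~3.4.4.9]{ArDeHaLa} to actual smoothness of $X''$, is where the real work lies; the remaining steps are formal consequences of the toric machinery already recalled.
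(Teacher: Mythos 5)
Your proposal follows essentially the same route as the paper: the paper's proof of Proposition~\ref{prop:cpl1isstr} consists of nothing beyond Construction~\ref{rem:cpl1isstr} together with the appeal to \cite[Sec.~3.4.4]{ArDeHaLa} and \cite{Hug} --- take any subdivision of $\Sigma \sqcap \trop(X)$, pass to a regular refinement $\Sigma''$, and observe that the proper transform $X''$ is again of the form $X(A,P'')$ with Cox ring $R(A,P'')$ presented by the shifted trinomials, hence smooth and a Mori dream space. Your closing sketch is also the computation that those references actually carry out: every cone of a subdivision of $\Sigma \sqcap \trop(X)$ lies in a single leaf $P_1^{-1}(\varrho_i)$, so at any point of the relevant locus at most one block of variables $T_{ij}$ vanishes, and each shifted trinomial then has a nonvanishing partial derivative with respect to a variable from a block of nonvanishing coordinates.

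One step in your sketch needs repair, however. Checking that the Jacobian of $(g_1'',\ldots,g_s'')$ has full rank is a computation in the Cox coordinates, so by itself it only proves smoothness of the characteristic space $\rq{X}''$, i.e.\ quasi-smoothness of $X''$; this can never suffice on its own, as the toric case already shows: there the characteristic space is an open subset of affine space, hence always smooth, while the toric variety itself is singular whenever its fan is not regular. For the same reason your intermediate assertion that smoothness of $Z_{\Sigma''}$ plus a complete intersection Cox ring forces $X''$ to be smooth is false as a general principle --- a generic one-nodal quintic threefold in $\PP^4$ has class group $\ZZ$, hence smooth minimal toric ambient variety and a hypersurface Cox ring, yet it is singular. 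What closes the argument is that regularity of $\Sigma''$ makes the quasitorus $H''$ act freely on the relevant locus, equivalently the Cox construction is a principal bundle over the affine charts of $Z_{\Sigma''}$; only then does smoothness of $\rq{X}''$ descend to $X''$, or, said chart-wise, only then is dehomogenizing the $g_i''$ to the charts of $Z_{\Sigma''}$ legitimate. With that point made explicit, your argument coincides with the one in \cite[Sec.~3.4.4]{ArDeHaLa} and \cite{Hug}, and the proposal is correct.
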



\section{Structure of the anticanonical complex}
\label{sec:cpl1-antican}

The notation is the same as in Section~\ref{sec:fanocpl1}.
We consider a $\QQ$-factorial rational Fano 
variety~$X = X(A,P)$ with torus action of complexity 
one and investigate the structure of the associated 
anticanonical complex $A_X^c$.
Combining the results with Theorem~\ref{thm:main},
we derive first bounding conditions on the entries 
of the defining matrix $P$.

Recall that we have $X \subseteq Z_\Sigma \subseteq Z_c$, 
where $Z_c$ is a toric Fano variety and $Z_\Sigma$ is the 
minimal open toric subvariety of $Z_c$ containing~$X$
as a closed subvariety.
The fans $\Sigma_c$ of $Z_c$ and $\Sigma$ of $Z_\Sigma$ 
live in the lattice $\ZZ^{r+s}$.
They share the same set of rays $\varrho$
and the primitive generators 
$v_{\varrho} \in \varrho$ are precisely the columns
of the matrix
$$ 
P 
\ = \ 
\left(
\begin{array}{rrrrr}
-l_0   & l_1 &        & 0 & 0
\\
\vdots &     &  \ddots  & & \vdots
\\
-l_0   &  0  &          & l_r & 0
\\
d_0    & d_1 &          & d_r & d'
\end{array}
\right),
$$
where each $l_i = (l_{i1}, \ldots, l_{in_i})$ is 
an $1 \times n_i$ block and each
$d_i = (d_{i1}, \ldots, d_{in_i})$ is an $s \times n_i$ 
block.
The tropical variety $\trop(X)$ with its quasifan structure 
also lives in $\ZZ^{r+s}$.
With $\lambda := 0 \times \QQ^s \subseteq \QQ^{r+s}$, 
the canonical basis vectors $e_1,\ldots, e_r$ and
$e_0 := -e_1- \ldots -e_r$, we have
$$ 
\trop(X) 
\ = \ 
\tau_0 \cup \ldots \cup \tau_r
\ \subseteq \ 
\QQ^{r+s},
\qquad \text{where} \quad
\tau_i \ := \ \cone(e_i) + \lambda.
$$
Note that this defines the coarsest possible quasifan structure
on $\trop(X)$, and the lineality space of this quasifan is $\lambda$.

\begin{definition}
A cone $\sigma \in \Sigma$ is called
\emph{big}, if $\sigma \cap \relint(\tau_i) \ne \emptyset$ 
holds for each $i = 0, \ldots, r$.
An \emph{elementary big} cone is a big cone $\sigma \in \Sigma$ 
having no rays inside $\lambda$ and precisely one inside 
$\tau_i$ for each $i = 0, \ldots, r$.
A \emph{leaf cone} is a $\sigma \in \Sigma$ such that
$\sigma \subseteq \tau_i$ holds for some $i$. 
\end{definition}

\begin{remark}
The big cones and the leaf cones are precisely those cones 
$\sigma \in \Sigma$ such that $\relint(\sigma)$ intersects
$\trop(X)$. 
The latter property, by Tevelev's criterion~\cite[Lemma~2.2]{Tev},
merely means that
the big cones and the leaf cones describe
precisely the toric orbits of $Z$ intersecting $X$.
Observe that all maximal cones of $\Sigma$
are big cones or leaf cones.
\end{remark}

\begin{definition}
\label{def:ellsigma}
Let $\sigma \in \Sigma$ be an elementary big cone.
We assign the following integers
to the rays $\varrho = \cone(v_{ij}) \in \sigma^{(1)}$
of $\sigma$ and to $\sigma$ itself:
$$
l_\varrho \ := \ l_{ij},
\qquad
\ell_{\sigma,\varrho} 
\ := \ 
l_{{\varrho}}^{-1} \prod_{\varrho' \in \sigma^{(1)}}{l_{\varrho'}},
\qquad 
\ell_{\sigma} \ 
:= \ 
\sum_{\varrho \in \sigma^{(1)}}{\ell_{\sigma,\varrho}} 
- 
(r-1)\prod_{\varrho \in \sigma^{(1)}}{l_{\varrho}}.
$$
Moreover, in $\QQ^{r+s}$, we define vectors and a ray:
$$
v_{\sigma} 
\ := \ 
\sum_{\varrho \in \sigma^{(1)}}{\ell_{\sigma,\varrho} v_{\varrho}},
\qquad\qquad
v'_{\sigma}
\ := \ 
\ell_{\sigma}^{-1}v_{\sigma},
\qquad\qquad
\varrho_\sigma 
\ := \ 
\cone(v_{\sigma}).
$$
Finally, we denote by $c_\sigma$ the greatest common divisor 
of the entries of the vector $v_\sigma \in \ZZ^{r+s}$.
\end{definition}

The first structural statement describes the rays of the 
coarsest common refinement $\Sigma \sqcap \trop(X)$ of the 
fan $\Sigma$ and the tropical variety $\trop(X)$ regarded 
as a quasifan.

\begin{proposition}
Let $X = X(A,P)$ be a $\QQ$-factorial Fano variety. 
\begin{enumerate}
\item 
For every elementary big cone $\sigma \in \Sigma$,
we have $\sigma \cap \lambda = \varrho_\sigma$; in 
particular, $\varrho_\sigma$ lies in the lineality space 
$\lambda$.
\item
The set of rays of $\Sigma \sqcap \trop(X)$ consists 
of the rays $\varrho \in \Sigma$ and the rays 
 $\varrho_\sigma$, where $\sigma \in \Sigma$ runs through 
the elementary big cones.
\end{enumerate}
\end{proposition}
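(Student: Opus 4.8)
The plan is to treat the two assertions in turn: (i) by a direct computation with the balanced weights $\ell_{\sigma,\varrho}$, and (ii) by reducing the description of the new rays to a linear balancing problem on each big cone. For (i), I fix an elementary big cone $\sigma$ with rays $\varrho_0,\ldots,\varrho_r$, where $\varrho_i$ is the unique ray inside $\tau_i$, and first compute the upper $r$ coordinates $P_1(v_\sigma)$ of $v_\sigma = \sum_i \ell_{\sigma,\varrho_i}v_{\varrho_i}$. Since $P_1(v_{\varrho_i}) = l_{\varrho_i}e_i$ and, by definition, $\ell_{\sigma,\varrho_i}l_{\varrho_i} = \prod_{\varrho' \in \sigma^{(1)}}l_{\varrho'}$ is independent of $i$, the sum collapses to a multiple of $e_0 + \ldots + e_r = 0$; hence $P_1(v_\sigma) = 0$ and $v_\sigma \in \lambda$. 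As $v_\sigma$ is a strictly positive combination of the rays of $\sigma$, this already gives $\varrho_\sigma \subseteq \sigma \cap \lambda$. For the reverse inclusion I take an arbitrary $x = \sum_i c_i v_{\varrho_i} \in \sigma$ with $c_i \ge 0$ and $P_1(x) = 0$: reading off the upper coordinates and using that $e_0 + \ldots + e_r = 0$ is the only linear relation among the $e_i$ forces $c_i l_{\varrho_i}$ to be independent of $i$, so that $x$ is a nonnegative multiple of $v_\sigma$. This settles $\sigma \cap \lambda = \varrho_\sigma$.

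For (ii), recall from Construction~\ref{rem:cpl1isstr} that the cones of $\Sigma \sqcap \trop(X)$ are exactly the $\sigma \cap \tau_i$ with $\sigma \in \Sigma$ and $0 \le i \le r$. Every ray $\varrho \in \Sigma$ lies in some $\tau_i$, its primitive generator being a column of $P$, so $\varrho = \varrho \cap \tau_i$ survives as a ray of the refinement and the task is to identify the new rays. The key reduction I would prove is the decomposition
$$
\sigma \cap \tau_i
\ = \
\cone(v_\varrho; \; \varrho \in \sigma^{(1)}, \; \varrho \subseteq \tau_i)
\ + \
(\sigma \cap \lambda),
$$
valid for every big cone $\sigma$. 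Using $\QQ$-factoriality, $\sigma$ is simplicial, so a point of $\sigma \cap \tau_i$ has a unique representation $\sum_\varrho c_\varrho v_\varrho$; writing $S_k := \sum_{\varrho \subseteq \tau_k}c_\varrho l_\varrho$ for $k = 0,\ldots,r$, the condition $P_1(x) \in \cone(e_i)$ translates, via $e_0 + \ldots + e_r = 0$, into $S_k = \min_j S_j$ for all $k \ne i$. Splitting off from each group the common minimal amount produces an element of $\sigma \cap \lambda$, while the remainder is supported on the rays inside $\tau_i$; this yields the displayed decomposition. In particular every ray of $\Sigma \sqcap \trop(X)$ that is not already a ray of $\Sigma$ must be a ray of some $\sigma \cap \lambda$, hence lies in $\lambda$.

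It then remains to describe the rays of $\sigma \cap \lambda$ for a big cone $\sigma$. By the same balancing computation, $\sigma \cap \lambda$ is the sum of $\cone(v_\varrho; \; \varrho \in \sigma^{(1)}, \; \varrho \subseteq \lambda)$ with the balanced cone $C$ cut out by $S_0 = \ldots = S_r$ on the remaining rays. The rays inside $\lambda$ are already rays of $\Sigma$, so the new rays come from the extreme rays of $C$. Here I would argue by minimal support: a nonzero point of $C$ forces the common value $S_0 = \ldots = S_r$ to be positive, so every group $\tau_k$ must contribute, and an extreme ray therefore uses exactly one ray $\varrho_k \subseteq \tau_k$ per index $k$. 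Since $\sigma$ is simplicial and big, such a selection spans an elementary big face $\sigma_0 \preceq \sigma$, and part~(i) identifies the resulting extreme ray with $\varrho_{\sigma_0}$. Conversely every elementary big cone $\sigma_0 \in \Sigma$ arises this way and, being a face, contributes $\varrho_{\sigma_0} = \sigma_0 \cap \lambda$ as a ray of the refinement. A short fan-theoretic argument shows that $\varrho_{\sigma_0}$, lying in the relative interior of $\sigma_0$, cannot coincide with any ray of $\Sigma$, so these rays are genuinely new.

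The main obstacle I anticipate is the balancing analysis: proving the decomposition of $\sigma \cap \tau_i$ and, above all, showing that the extreme rays of the balanced cone $C$ are precisely the $\varrho_{\sigma_0}$ coming from elementary big faces. Once the reduction ``new rays lie in $\lambda$'' is in place, the minimal-support argument is the heart of the matter, and it is exactly here that simpliciality of $\Sigma$, i.e.~$\QQ$-factoriality, is essential, both to obtain unique coordinate representations and to pass from a minimal-support selection of rays to an honest face of $\sigma$.
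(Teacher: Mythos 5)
Your argument is sound, and for part~(ii) it takes a genuinely different route from the paper's proof; part~(i) coincides with the paper, which simply says ``one directly computes $\sigma \cap \lambda$'' --- your balancing computation with $P_1(v_{\varrho_i}) = l_{\varrho_i}e_i$ and the one-dimensional space of relations among $e_0,\ldots,e_r$ is exactly that computation. For~(ii) the paper argues fan-theoretically: a ray $\varrho'$ of $\Sigma \sqcap \trop(X)$ not in $\Sigma$ is written as $\varrho' = \sigma \cap \tau$ with $\sigma \in \Sigma$, $\tau \in \trop(X)$ chosen minimal, so that $\relint(\varrho') = \relint(\sigma) \cap \relint(\tau)$; the case $\tau = \tau_i$ is excluded (else $\sigma$ would have no ray inside $\tau_i$, yet $\relint(\sigma)$ meets $\relint(\tau_i)$, impossible since all rays of $\sigma$ lie on $\trop(X)$), hence $\tau = \lambda$, so $\sigma$ is big and, being simplicial, has an elementary big face $\eta$, and $\varrho_\eta = \eta \cap \lambda \preceq \sigma \cap \lambda = \varrho'$ forces $\varrho' = \varrho_\eta$. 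You instead compute the refinement cones themselves, via the Minkowski decompositions $\sigma \cap \tau_i = \cone(v_\varrho;\, \varrho \subseteq \tau_i) + (\sigma \cap \lambda)$ and $\sigma \cap \lambda = \cone(v_\varrho;\, \varrho \subseteq \lambda) + C$ with the balanced cone $C$, and then match the extreme rays of $C$ with the rays $\varrho_{\sigma_0}$ of elementary big faces $\sigma_0 \preceq \sigma$. The paper's proof is shorter and needs no explicit coordinates; yours yields strictly more information --- a description of all cones of $\Sigma \sqcap \trop(X)$, not just its rays, which is essentially what Corollary~\ref{thm:cpl1antican} exploits later --- and it makes visible exactly where $\QQ$-factoriality enters.

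Two steps in your sketch still need to be carried out, and both do go through. First, ``an extreme ray of $C$ uses exactly one ray per group'' does not follow from the support count alone; it needs an exchange argument: if two rays $\varrho \ne \varrho'$ of the same group carry positive coefficients in $x \in C$, then $x \pm \varepsilon\bigl(l_\varrho^{-1}v_\varrho - l_{\varrho'}^{-1}v_{\varrho'}\bigr) \in C$ for small $\varepsilon > 0$ since all the sums $S_k$ are unchanged, and linear independence of the ray generators of the simplicial cone $\sigma$ shows that $l_\varrho^{-1}v_\varrho - l_{\varrho'}^{-1}v_{\varrho'}$, supported in a single group, cannot be proportional to $x$, whose support meets every group; hence $x$ is not extreme. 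Second, you deduce ``every new ray lies in some $\sigma \cap \lambda$'' only from the decomposition for \emph{big} cones, which leaves out the maximal cones of $\Sigma$ that are leaf cones. The cleanest repair is to note that your proof of the decomposition of $\sigma \cap \tau_i$ never uses bigness: it holds for every (simplicial) cone of $\Sigma$, with $C = \{0\}$ and $\sigma \cap \lambda$ a face of $\sigma$ when $\sigma$ is not big, so all cases are covered at once.
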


\begin{proof}
For~(i), one directly computes the intersection
$\sigma \cap \lambda$.
We prove~(ii). 
Since all rays of $\Sigma$ lie on $\trop(X)$,
the rays of $\Sigma$ are also rays of 
$\Sigma \sqcap \trop(X)$.
By~(i), the $\varrho_{\sigma}$, where $\sigma \in \Sigma$ 
is elementary big, are rays of $\Sigma \sqcap \trop(X)$.
Let $\varrho' \in \Sigma \sqcap \trop(X)$ be any ray not 
belonging to $\Sigma$.
Then there exist cones $\sigma \in \Sigma$ and $\tau \in \trop(X)$
which satisfy $\sigma \cap \tau = \varrho'$ 
and which are minimal 
with this property.
The latter means that $\relint(\varrho') = 
\relint(\sigma) \cap \relint(\tau)$.

To obtain $\tau = \lambda$, we have to exclude the 
case $\tau = \tau_i$ for some $i = 0,\ldots,r$.
Indeed if $\tau = \tau_i$, then no ray $\varrho \preceq \sigma$ 
lies in $\tau_i$, because otherwise we had
$\varrho \subseteq \sigma \cap \tau_i = \varrho'$,
contradicting $\varrho' \not \in \Sigma$.
Thus, $\sigma$ has no rays inside $\tau_i$.
Since all rays of $\sigma$ lie on $\trop(X)$,
we conclude $\relint(\sigma) \cap \relint(\tau_i) = \emptyset$,
a contradiction.

We show that $\sigma$ is an elementary big cone.
Firstly, $\sigma$ must be big because otherwise 
we had $\relint(\sigma) \cap \lambda = \emptyset$.
Since $X$ is $\QQ$-factorial, $\sigma$ is simplicial.
Thus there exists 
an elementary big face $\eta$ of $\sigma$. 
But then $\varrho_{\eta} = \eta
\cap \lambda \preceq \sigma \cap \lambda = \varrho'$ 
which implies $\varrho' = \varrho_{\eta}$.
By minimality of $\sigma$, we conclude $\sigma = \eta$.
\end{proof}

We  take a closer look at the discrepancies 
of a tropical resolution of singularities 
along the divisors corresponding to the rays 
$\varrho_\sigma$.

\begin{proposition}
Let $X = X(A,P)$ be a $\QQ$-factorial Fano variety
and $\sigma \in \Sigma$ an elementary big cone. 
\begin{enumerate}
\item 
If $\varrho_{\sigma}$ leaves $A_X$, e.g.~if $\sigma$ 
defines a log terminal singularity, then its leaving point is 
$v'_{\varrho_{\sigma}} = \ell_{\sigma}^{-1}v_{\sigma} = v'_{\sigma}$.
\item
For any tropical resolution $\varphi \colon X' \to X$
of singularities,
the discrepancy along the divisor corresponding to
$\varrho_{\sigma}$ is 
$a_{\varrho_{\sigma}} = -1 + c_{\sigma}^{-1}\ell_{\sigma}$. 
\end{enumerate}
\end{proposition}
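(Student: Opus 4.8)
The plan is to reduce both assertions to a single evaluation, namely
$\bangle{u,v_\sigma} = -\ell_\sigma$
for the distinguished vertex $u$ of $B_X$ that already occurs in the proof of Proposition~\ref{prop:discrep}. Note first that $\varrho_\sigma$ is a ray of $\Sigma\sqcap\trop(X)$ not belonging to $\Sigma$, hence an exceptional ray of any tropical resolution, so Proposition~\ref{prop:discrep} applies to it. There, for the ray $\varrho_\sigma$ one takes the maximal cone $\tau\in\mathcal{N}(B(-\mathcal{K}_X)+B)$ containing $P^{-1}(\varrho_\sigma)$, its vertex $\eta=\nu_{-\mathcal{K}_X}+\nu$ with $\nu_{-\mathcal{K}_X}\in B(-\mathcal{K}_X)$ and $\nu\in B$, sets $u:=(P^*)^{-1}(\eta-e_\Sigma)\in B_X$, and obtains $a_{\varrho_\sigma}=-1+\nu'_{\varrho_\sigma}-\bangle{u,v_{\varrho_\sigma}}$ with $\nu'_{\varrho_\sigma}=0$. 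Since $v_\sigma=\sum_{\varrho\in\sigma^{(1)}}\ell_{\sigma,\varrho}v_\varrho$ is integral with greatest common divisor $c_\sigma$ of its entries, the primitive generator of $\varrho_\sigma$ is $v_{\varrho_\sigma}=c_\sigma^{-1}v_\sigma$; granting the evaluation, part~(ii) follows at once from $a_{\varrho_\sigma}=-1-c_\sigma^{-1}\bangle{u,v_\sigma}=-1+c_\sigma^{-1}\ell_\sigma$.

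To establish the evaluation I would write $v_\sigma=P(w)$ with $w:=\sum_{\varrho\in\sigma^{(1)}}\ell_{\sigma,\varrho}e_\varrho\in\QQ^{n+m}$, so that adjunction gives $\bangle{u,v_\sigma}=\bangle{P^*u,w}=\bangle{\eta,w}-\bangle{e_\Sigma,w}$, where $\bangle{e_\Sigma,w}=\sum_{\varrho\in\sigma^{(1)}}\ell_{\sigma,\varrho}$. For $\bangle{\eta,w}$ observe that $w\in P^{-1}(\varrho_\sigma)\subseteq\tau$, so $\eta$ minimizes $\bangle{\cdot,w}$ over the Minkowski sum $B(-\mathcal{K}_X)+B$; as the sum splits, $\bangle{\eta,w}=\min_{B(-\mathcal{K}_X)}\bangle{\cdot,w}+\min_{B}\bangle{\cdot,w}$. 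The first term vanishes: by ampleness of $-\mathcal{K}_X$ and \cite[Prop.~4.1]{Ha2} the set $B(-\mathcal{K}_X)\cap\relint(\rq{\sigma}^\perp\cap\gamma_\Rho)$ is nonempty, yielding a minimizer $\nu_{-\mathcal{K}_X}$ with $\bangle{\nu_{-\mathcal{K}_X},e_\varrho}=0$ for all $\varrho\in\sigma^{(1)}$, hence $\bangle{\nu_{-\mathcal{K}_X},w}=0$; and $0$ is a lower bound since $w,\nu_{-\mathcal{K}_X}\ge 0$.

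The heart of the matter is the second term, and this is the step I expect to require the most care. With $B=B(g_0)+\ldots+B(g_{r-2})$ and each $B(g_i)=\conv(\mathfrak{l}_i,\mathfrak{l}_{i+1},\mathfrak{l}_{i+2})$ the triangle on the exponents $\mathfrak{l}_k$ of the monomials $T_k^{l_k}$, I would evaluate $\bangle{\mathfrak{l}_k,w}$ at each vertex. Because $\sigma$ is elementary big, it has exactly one ray $\cone(v_{kj_k})$ in each $\tau_k$, so $w$ has nonzero $(k,j)$-entry only at $j=j_k$, giving $\bangle{\mathfrak{l}_k,w}=l_{kj_k}\,\ell_{\sigma,\cone(v_{kj_k})}=\prod_{\varrho\in\sigma^{(1)}}l_\varrho$, the \emph{same} value for every $k=0,\ldots,r$. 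Hence each of the $r-1$ triangles contributes $\prod_\varrho l_\varrho$ and $\min_B\bangle{\cdot,w}=(r-1)\prod_\varrho l_\varrho$. Combining the three pieces yields $\bangle{u,v_\sigma}=(r-1)\prod_{\varrho\in\sigma^{(1)}}l_\varrho-\sum_{\varrho\in\sigma^{(1)}}\ell_{\sigma,\varrho}=-\ell_\sigma$, exactly the identity sought; it is precisely this uniform-value phenomenon, linking the trinomial structure of the $g_i$ to the weights $\ell_{\sigma,\varrho}$, that makes the formula come out.

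It remains to read off part~(i). If $\varrho_\sigma$ leaves $A_X$ (in particular whenever $\sigma$ defines a log terminal singularity, by Theorem~\ref{thm:main}(ii)), the proof of Proposition~\ref{prop:discrep} identifies its leaving point as $\varrho_\sigma\cap\partial H$ for the supporting halfspace $H=\{v\in\QQ^{r+s};\;\bangle{u,v}\ge-1\}$ of $A_X$ defined by $u$. Solving $\bangle{u,t\,v_\sigma}=-1$ with $\bangle{u,v_\sigma}=-\ell_\sigma$ gives $t=\ell_\sigma^{-1}$ (note $\ell_\sigma>0$ in the leaving case), so $v'_{\varrho_\sigma}=\ell_\sigma^{-1}v_\sigma=v'_\sigma$, which is~(i); part~(ii) then follows from the displayed discrepancy formula as noted above.
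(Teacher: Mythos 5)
Your proposal is correct and follows essentially the same route as the paper: both reduce everything to the single evaluation $\bangle{u,v_\sigma}=-\ell_\sigma$, computed from the same three pieces (vanishing of the minimizing vertex of $B(-\mathcal{K}_X)$ on $\rq{\sigma}$ via ampleness, the uniform value $(r-1)\prod_{\varrho}l_\varrho$ of $\bangle{\cdot\,,\rq{v}_\sigma}$ on $B$ coming from the trinomial structure, and $\bangle{e_\Sigma,\rq{v}_\sigma}=\sum_\varrho \ell_{\sigma,\varrho}$), and then read off the leaving point and the discrepancy via Proposition~\ref{prop:discrep}. The only cosmetic difference is that you derive~(ii) from the intermediate formula $a_{\varrho_\sigma}=-1+\nu'_{\varrho_\sigma}-\bangle{u,v_{\varrho_\sigma}}$ inside the proof of Proposition~\ref{prop:discrep}, while the paper establishes~(i) first and then quotes the proposition's final norm-ratio statement.
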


\begin{proof}
Recall that the intersection point $v_{\varrho_\sigma}'$ 
of the ray $\varrho_\sigma$ 
with the boundary $\partial A_X^c$ is defined by
$$ 
\bangle{u, v_{\varrho_\sigma}'}
\ = \ 
-1,
\qquad
\text{where}
\quad
u \ := \ 
(P^*)^{-1}(e_{-\mathcal{K}_X} + e - e_{\Sigma})
$$
with any vertex $e_{-\mathcal{K}_X} + e - e_{\Sigma}$ 
of $B(-\mathcal{K}_X) + B - e_{\Sigma}$ minimizing  
$\rq{v}_{\sigma} := \sum_{\varrho \in \sigma^{(1)}} \ell_{\sigma, \varrho} e_{\varrho}$.
For $v_\sigma = P(\rq{v}_{\sigma})$, we obtain 
$$
\bangle{u, v_{\sigma}} 
\  = \ 
\bangle{e_{-\mathcal{K}_X}, \rq{v}_{\sigma}}
+ 
\bangle{e,\rq{v}_{\sigma}} 
- 
\bangle{e_{\Sigma}, \rq{v}_{\sigma}} 
\  = \ 
\bangle{e,\rq{v}_{\sigma}} 
- 
\bangle{e_{\Sigma}, \rq{v}_{\sigma}}.
$$
To compute further, set
$\rq{u}_i := \sum_{\varrho \in \Rho_i} l_{\varrho} e_{\varrho}$ for $i=0,\ldots,r$,
where $\Rho_i$ denotes the set of rays of $\Sigma$ contained in $\tau_i$. 
Denoting by $\varrho_i$ the unique ray of $\sigma$ in $\tau_i$, we have
$$
\bangle{\rq{u}_i, \rq{v}_{\sigma}}
\ = \ 
l_{\varrho_i} \ell_{\sigma, \varrho_i} 
\ = \ 
\prod_{\varrho \in \sigma^{(1)}} l_{\varrho}.
$$
Consequently, for any point $e \in B = B(g_0) + \ldots + B(g_{r-2})$, 
we obtain
$$
\bangle{e,\rq{v}_{\sigma}} 
\ = \ 
(r-1) \prod_{\varrho \in \sigma^{(1)}} l_{\varrho}.
$$
Thus, we obtain $\bangle{u, v_{\sigma}}  = -\ell_{\sigma}$
and the leaving point is  
$v_{\varrho_\sigma}' = \ell_{\sigma}^{-1}v_{\sigma} = v'_{\sigma}$
as claimed in~(i).
Assertion~(ii) is then a direct application of 
Proposition~\ref{prop:discrep}.
\end{proof}

As an application, we obtain first bounding conditions 
on the entries $l_\varrho$ of the defining matrix $P$ 
in terms of the singularities of $X$.

\begin{corollary}
\label{cor:sing2lbound}
Let $X = X(A,P)$ be a $\QQ$-factorial Fano variety
and $\sigma \in \Sigma$ an elementary big cone. 
If the singularity defined by $\sigma$ is 
\begin{enumerate}
\item 
log terminal, then 
$\sum_{\varrho \in \sigma^{(1)}}{l_{\varrho}^{-1}} > r-1$,
\vspace{2pt}
\item 
$\varepsilon$-log terminal, then 
$\sum_{\varrho \in \sigma^{(1)}}{l_{\varrho}^{-1}} 
> r-1 + \varepsilon c_{\sigma}\prod_{\varrho \in \sigma^{(1)}}{l_{\varrho}^{-1}}$,
\vspace{2pt}
\item 
canonical, then 
$\sum_{\varrho \in \sigma^{(1)}}{l_{\varrho}^{-1}} \geq 
r-1 + c_{\sigma}\prod_{\varrho \in \sigma^{(1)}}{l_{\varrho}^{-1}}$,
\vspace{2pt}
\item 
terminal, then
$\sum_{\varrho \in \sigma^{(1)}}{l_{\varrho}^{-1}} > 
 r-1 + c_{\sigma}\prod_{\varrho \in \sigma^{(1)}}{l_{\varrho}^{-1}}$.
\end{enumerate}
\end{corollary}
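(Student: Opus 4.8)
The plan is to derive each of the four inequalities from the single discrepancy formula $a_{\varrho_\sigma} = -1 + c_\sigma^{-1}\ell_\sigma$ of the preceding proposition, once the type of singularity is translated into a bound on this one discrepancy. Since $X = X(A,P)$ is strongly tropically resolvable by Proposition~\ref{prop:cpl1isstr}, I first fix a tropical resolution $\varphi \colon X' \to X$ whose fan has $\varrho_\sigma$ among its rays. By the structural proposition describing the rays of $\Sigma \sqcap \trop(X)$, the ray $\varrho_\sigma = \sigma \cap \lambda$ is a ray of $\Sigma \sqcap \trop(X)$ that does not belong to $\Sigma$; hence $\varrho_\sigma$ is a ray of every regular refinement, and the associated divisor $D_{\varrho_\sigma}$ on $X'$ is a genuine exceptional divisor lying over the locus defined by $\sigma$. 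Consequently, if the singularity defined by $\sigma$ is log terminal ($\varepsilon$-log terminal, canonical, terminal), then by the very definition of these notions the discrepancy $a_{\varrho_\sigma}$ must satisfy $a_{\varrho_\sigma} > -1$ (respectively $> -1 + \varepsilon$, $\ge 0$, $> 0$). This is the only step where the hypothesis on the singularity enters.

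It remains to convert these discrepancy bounds into the asserted inequalities on the $l_\varrho$. The key observation is the identity, obtained by substituting $\ell_{\sigma,\varrho} = l_\varrho^{-1}\prod_{\varrho' \in \sigma^{(1)}} l_{\varrho'}$ from Definition~\ref{def:ellsigma} into the definition of $\ell_\sigma$:
$$
\ell_\sigma
\ = \
\sum_{\varrho \in \sigma^{(1)}} \ell_{\sigma,\varrho} - (r-1)\prod_{\varrho \in \sigma^{(1)}} l_\varrho
\ = \
\Bigl(\prod_{\varrho \in \sigma^{(1)}} l_\varrho\Bigr)\Bigl(\sum_{\varrho \in \sigma^{(1)}} l_\varrho^{-1} - (r-1)\Bigr).
$$
Writing $\Pi := \prod_{\varrho \in \sigma^{(1)}} l_\varrho > 0$ and feeding this into $a_{\varrho_\sigma} = -1 + c_\sigma^{-1}\ell_\sigma$, the log terminal bound $a_{\varrho_\sigma} > -1$ becomes $\ell_\sigma > 0$, and dividing by $\Pi$ gives $\sum_\varrho l_\varrho^{-1} > r-1$, which is~(i). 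The canonical bound $a_{\varrho_\sigma} \ge 0$ reads $\ell_\sigma \ge c_\sigma$; dividing by $\Pi$ and using $\Pi^{-1} = \prod_\varrho l_\varrho^{-1}$ yields $\sum_\varrho l_\varrho^{-1} \ge r-1 + c_\sigma\prod_\varrho l_\varrho^{-1}$, which is~(iii). The $\varepsilon$-log terminal bound $a_{\varrho_\sigma} > -1+\varepsilon$ (i.e.\ $\ell_\sigma > \varepsilon c_\sigma$) and the terminal bound $a_{\varrho_\sigma} > 0$ (i.e.\ $\ell_\sigma > c_\sigma$) give~(ii) and~(iv) in exactly the same manner.

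The argument is short because all the geometric content is already packaged in the preceding proposition (the exact value of $a_{\varrho_\sigma}$) and in the definition of the singularity classes. The only point that needs genuine care, and which I expect to be the main obstacle, is the justification that the single discrepancy $a_{\varrho_\sigma}$ really is constrained by the stated singularity type: one must know that $\varrho_\sigma$ is an honest new ray inside $\sigma$, so that $D_{\varrho_\sigma}$ is exceptional and its discrepancy enters the definitions. This rests on the structural proposition identifying $\varrho_\sigma = \sigma \cap \lambda$ as a ray of $\Sigma \sqcap \trop(X)$ outside $\Sigma$, together with strong tropical resolvability, which guarantees a regular refinement realizing $\varrho_\sigma$. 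Everything else is the elementary division by $\Pi$ performed above.
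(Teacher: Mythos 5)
Your proposal is correct and is essentially the paper's intended argument: the corollary is stated as an immediate application of the preceding proposition, obtained by combining the exact discrepancy $a_{\varrho_\sigma} = -1 + c_\sigma^{-1}\ell_\sigma$ (for a tropical resolution realizing the ray $\varrho_\sigma$, which exists by Proposition~\ref{prop:cpl1isstr} and the structure of $\Sigma \sqcap \trop(X)$) with the definitions of the singularity classes, and then dividing the identity $\ell_\sigma = \bigl(\prod_\varrho l_\varrho\bigr)\bigl(\sum_\varrho l_\varrho^{-1} - (r-1)\bigr)$ by $\prod_\varrho l_\varrho$. Your care in checking that $\varrho_\sigma$ is a genuinely new ray (so $D_{\varrho_\sigma}$ is exceptional and lies over the chart $X_\sigma$) is exactly the point the paper leaves implicit.
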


\begin{corollary}
\label{cor:logterm2lbound}
Let $X = X(A,P)$ be a $\QQ$-factorial Fano variety
and consider an elementary big cone
$\sigma = \varrho_0 + \ldots + \varrho_r \in \Sigma$
defining a log terminal singularity. 
Assume $l_{\varrho_0} \ge  \ldots \ge l_{\varrho_r}$.  
Then $l_{\varrho_3} =  \ldots = l_{\varrho_r} = 1$
holds and $(l_{\varrho_0},l_{\varrho_1},l_{\varrho_2})$ 
is a platonic triple, i.e.,~one of
$$ 
(l_{\varrho_0},l_{\varrho_1},1),
\qquad
(l_{\varrho_0},2,2),
\qquad
(3,3,2),
\qquad
(4,3,2),
\qquad
(5,3,2).
$$
According to these possibilities, the number $\ell_{\sigma}$ is given as
\begin{align*}
  \ell_{\sigma} 
 &\ = \
 l_{\varrho_0} l_{\varrho_1} + 
 l_{\varrho_0} l_{\varrho_2} + 
 l_{\varrho_1} l_{\varrho_2} - 
 l_{\varrho_0} l_{\varrho_1} l_{\varrho_2} 
 \\
&\ = \ 
\begin{cases}
l_{\varrho_0}+l_{\varrho_1}, & \text{if }
(l_{\varrho_0},l_{\varrho_1},l_{\varrho_2})
=
(l_{\varrho_0},l_{\varrho_1},1),
\\
4, & \text{if }
(l_{\varrho_0},l_{\varrho_1},l_{\varrho_2}),
=
(l_{\varrho_0},2,2),
\\ 
3, & \text{if }
(l_{\varrho_0},l_{\varrho_1},l_{\varrho_2})
= (3,3,2),
\\
2, & \text{if }
(l_{\varrho_0},l_{\varrho_1},l_{\varrho_2})
=(4,3,2),
\\
1, & \text{if }
(l_{\varrho_0},l_{\varrho_1},l_{\varrho_2})
= (5,3,2).
\end{cases}
\end{align*}
\end{corollary}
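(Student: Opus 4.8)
The plan is to combine the characterization of log terminal singularities from Corollary~\ref{cor:sing2lbound}(i) with elementary number theory. The key inequality to exploit is $\sum_{\varrho \in \sigma^{(1)}} l_\varrho^{-1} > r-1$, where the sum ranges over the $r+1$ rays of the elementary big cone $\sigma$. Writing this as $\sum_{i=0}^r l_{\varrho_i}^{-1} > r-1$ and using $l_{\varrho_i} \ge 1$, I would first observe that each term is at most $1$, so subtracting the trivial bound $\sum 1 = r+1$ gives $\sum (1 - l_{\varrho_i}^{-1}) < 2$. Since each summand $1 - l_{\varrho_i}^{-1}$ is nonnegative and, whenever $l_{\varrho_i} \ge 2$, is at least $\tfrac12$, this forces at most three of the $l_{\varrho_i}$ to exceed $1$. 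Under the ordering assumption $l_{\varrho_0} \ge \ldots \ge l_{\varrho_r}$, this immediately yields $l_{\varrho_3} = \ldots = l_{\varrho_r} = 1$, reducing the problem to the three values $(l_{\varrho_0}, l_{\varrho_1}, l_{\varrho_2})$.

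Next I would analyze the residual inequality on the three leading entries. After using that all but the first three values equal $1$ (which contribute $1$ each and cancel against the corresponding $1$'s in the count), the condition becomes $l_{\varrho_0}^{-1} + l_{\varrho_1}^{-1} + l_{\varrho_2}^{-1} > 1$ with $l_{\varrho_0} \ge l_{\varrho_1} \ge l_{\varrho_2} \ge 1$. This is precisely the classical Platonic (or Diophantine) inequality governing triangle groups and the ADE singularities. The enumeration is standard: if $l_{\varrho_2} = 1$ the inequality holds for every $(l_{\varrho_0}, l_{\varrho_1}, 1)$; if $l_{\varrho_2} = 2$ then one needs $l_{\varrho_1}^{-1} + l_{\varrho_0}^{-1} > \tfrac12$, giving $(l_{\varrho_0}, 2, 2)$ for arbitrary $l_{\varrho_0}$, and for $l_{\varrho_1} = 3$ the bound $l_{\varrho_0}^{-1} > \tfrac16$ yields $l_{\varrho_0} \in \{3,4,5\}$, i.e.~the triples $(3,3,2)$, $(4,3,2)$, $(5,3,2)$; finally $l_{\varrho_2} \ge 3$ forces $3 \cdot \tfrac13 = 1$, violating strict inequality. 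This exhausts the list.

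For the formula for $\ell_\sigma$, I would simply substitute into the defining expression from Definition~\ref{def:ellsigma}. Since $\sigma$ is elementary big with $r+1$ rays and the entries beyond the first three are all $1$, the product $\prod_{\varrho \in \sigma^{(1)}} l_\varrho$ collapses to $l_{\varrho_0} l_{\varrho_1} l_{\varrho_2}$, and each $\ell_{\sigma,\varrho}$ for a trailing ray equals $l_{\varrho_0} l_{\varrho_1} l_{\varrho_2}$ while the contributions of $\varrho_0, \varrho_1, \varrho_2$ give $l_{\varrho_1} l_{\varrho_2}$, $l_{\varrho_0} l_{\varrho_2}$, $l_{\varrho_0} l_{\varrho_1}$ respectively. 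I expect the $(r-1)$ trailing terms $\ell_{\sigma,\varrho_3}, \ldots, \ell_{\sigma,\varrho_r}$, each equal to $l_{\varrho_0} l_{\varrho_1} l_{\varrho_2}$, to cancel exactly against the subtracted term $(r-1)\prod_{\varrho} l_\varrho = (r-1) l_{\varrho_0} l_{\varrho_1} l_{\varrho_2}$, leaving the clean three-variable expression
$$
\ell_\sigma = l_{\varrho_0} l_{\varrho_1} + l_{\varrho_0} l_{\varrho_2} + l_{\varrho_1} l_{\varrho_2} - l_{\varrho_0} l_{\varrho_1} l_{\varrho_2}.
$$
Evaluating this on each of the five platonic triples produces the stated case-by-case values, a routine arithmetic check.

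The main obstacle is not any single step but keeping the bookkeeping of the index set honest: one must verify carefully that the trailing rays with $l_\varrho = 1$ contribute in a way that cancels precisely with the $(r-1)$ correction term, so that $\ell_\sigma$ genuinely depends only on the three nontrivial entries regardless of $r$. I would therefore take care to track which rays the quantities $\ell_{\sigma,\varrho}$ and the product run over, confirming that the formula is independent of the total number of rays before specializing to the platonic cases.
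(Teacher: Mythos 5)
Your approach coincides with the paper's: the corollary is meant to follow directly from Corollary~\ref{cor:sing2lbound}(i), and your reduction of $\sum_{i=0}^{r} l_{\varrho_i}^{-1} > r-1$ to the classical platonic inequality $l_{\varrho_0}^{-1}+l_{\varrho_1}^{-1}+l_{\varrho_2}^{-1} > 1$ (via the observation that at most three of the $l_{\varrho_i}$ can exceed $1$) is exactly the intended argument, as is the substitution into Definition~\ref{def:ellsigma}. However, the one step you yourself flagged as the main bookkeeping hazard is carried out with an off-by-one error: the trailing rays are $\varrho_3,\ldots,\varrho_r$, which number $r-2$, not $r-1$, so their contributions sum to $(r-2)\,l_{\varrho_0}l_{\varrho_1}l_{\varrho_2}$ and do \emph{not} cancel exactly against the subtracted term $(r-1)\,l_{\varrho_0}l_{\varrho_1}l_{\varrho_2}$. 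The residual $-l_{\varrho_0}l_{\varrho_1}l_{\varrho_2}$ from this non-exact cancellation is precisely the last summand of the stated formula; ``exact cancellation'' as you describe it would instead yield $l_{\varrho_0}l_{\varrho_1}+l_{\varrho_0}l_{\varrho_2}+l_{\varrho_1}l_{\varrho_2}$ with no cubic correction, contradicting the expression you then write down. So your conclusion is right but the justification is internally inconsistent and needs the count repaired. One further, truly minor, omission: in the enumeration with $l_{\varrho_2}=2$ you treat only $l_{\varrho_1}\in\{2,3\}$; the subcase $l_{\varrho_1}\ge 4$ should be noted and excluded, since then $l_{\varrho_0}^{-1}+l_{\varrho_1}^{-1}\le \tfrac12$ violates the inequality.
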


\begin{corollary}
\label{cor:boundrel}
Let $X = X(A,P)$ be a log terminal $\QQ$-factorial
Fano variety.
Assume that $P$ is irredundant and
$\Sigma$ contains a big cone. 
Then the number $r-1$ of relations is bounded~by 
$$
r-1
\ \leq \ 
\dim(X) + \rk(\Pic(X)) .
$$
\end{corollary}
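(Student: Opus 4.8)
The plan is to reduce the asserted inequality to a purely combinatorial statement about the defining matrix $P$ and then to establish it by a counting argument combining the irredundancy of $P$ with the platonic-triple restriction from Corollary~\ref{cor:logterm2lbound}. First I would rewrite both sides in terms of the numerical data of $P$. We have $\dim(X) = s+1$, and since $X$ is $\QQ$-factorial, $\rk(\Pic(X)) = \rk(\Cl(X)) = \rk(K)$; because $P^*$ is injective of rank $r+s$, the latter equals $(n+m)-(r+s)$. Hence $\dim(X)+\rk(\Pic(X)) = n+m-r+1$, and the claim $r-1 \le \dim(X)+\rk(\Pic(X))$ becomes equivalent to the estimate $2(r-1) \le n+m$.

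To exploit the hypotheses, I would next produce an elementary big cone. By assumption $\Sigma$ contains a big cone, which is simplicial since $X$ is $\QQ$-factorial; as in the proof of the structural proposition describing the rays of $\Sigma \sqcap \trop(X)$, such a cone has an elementary big face $\sigma$. This $\sigma$ carries exactly one ray $\varrho_i$ in each $\tau_i$, $i = 0,\ldots,r$, and each $\varrho_i$ is generated by a column $v_{ij}$ of the $i$-th block, so $l_{\varrho_i} = l_{ij}$. As $X$ is log terminal, the singularity defined by $\sigma$ is log terminal, and Corollary~\ref{cor:logterm2lbound} applies: after ordering $l_{\varrho_0}\ge\ldots\ge l_{\varrho_r}$ it yields $l_{\varrho_3}=\ldots=l_{\varrho_r}=1$. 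In particular at most three of the integers $l_{\varrho_0},\ldots,l_{\varrho_r}$ exceed $1$.

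The counting step then links this to the block sizes $n_i$. For any index $i$ with $n_i = 1$ the cone $\sigma$ has no choice for its ray in $\tau_i$: it must be $\cone(v_{i1})$, whence $l_{\varrho_i} = l_{i1} \ge 2$ by irredundancy. Thus every block of length one contributes a value $l_{\varrho_i} > 1$, and by the previous paragraph there are at most three such blocks. Writing $a := \vert\{i;\; n_i = 1\}\vert \le 3$ and using $n_i \ge 2$ for the remaining $r+1-a$ blocks, I obtain
$$
n \ = \ \sum_{i=0}^r n_i \ \ge \ a + 2(r+1-a) \ = \ 2(r+1) - a \ \ge \ 2r-1.
$$
Therefore $n+m \ge n \ge 2r-1 > 2(r-1)$, which is precisely the reformulated bound.

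I expect the main obstacle to be the second step, namely the passage from the geometric hypothesis to the arithmetic restriction that at most three blocks can have length one: all the real content sits in invoking Corollary~\ref{cor:logterm2lbound} for a suitable elementary big cone and reading it off against irredundancy. By contrast, the identification of $\rk(\Pic(X))$ with $(n+m)-(r+s)$ and the existence of an elementary big face are routine consequences of the setup in Section~\ref{sec:fanocpl1}.
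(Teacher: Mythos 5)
Your proof is correct and takes essentially the same route as the paper's: both extract an elementary big cone from the given big cone via $\QQ$-factoriality, use irredundancy to force $l_{\varrho_i} \ge 2$ whenever $n_i = 1$, and invoke Corollary~\ref{cor:logterm2lbound} to conclude that at most three blocks can have size one. Your upfront reduction to $2(r-1) \le n+m$ followed by the count $n \ge 2(r+1)-3$ is just a transparent repackaging of the paper's computation with $\rk(\Pic(X))$ and $n_I$, $|I|$.
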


\begin{proof}
Since $X$ is $\QQ$-factorial, $\Pic(X)$ is of rank $n + m - r -s$. 
Let $I \subseteq \{0,\ldots,r\}$ be the set of indices 
with $n_i >1$ and set $n_I := \sum_{i \in I}{n_i}$. 
Then the rank of $\Pic(X)$ equals $n_I + m - |I| - s$. 
Since there exists a big cone, there is also an
elementary big cone 
$\sigma = \varrho_0 + \ldots + \varrho_r \in \Sigma$. 
Since $P$ is irredundant, $l_{\varrho_i} >1$ holds 
for all $i \not\in I$.
Corollary~\ref{cor:logterm2lbound} yields $|I| \geq r-2$. 
We conclude 
$$
\rk(\Pic(X)) 
\ = \
m + n_I - |I| - s 
\ \ge \
2 |I| - |I| - s 
\ \ge \ 
r -2 - s 
\ = \ 
r -1 - \dim(X).
$$
\end{proof}

\begin{definition}
Let $A_X^c$ be the anticanonical complex of $X = X(A,P)$.
Recall that the lineality part of $A_X^c$ is the polyhedral 
complex $A_{X,0}^c = A_X^c \sqcap \lambda$.
The \emph{$i$-th leaf} of $A_X^c$ is the polyhedral complex
$A_X^c \sqcap \tau_i$.
\end{definition}

\begin{corollary}
\label{thm:cpl1antican}
Let $X = X(A,P)$ be a log terminal $\QQ$-factorial 
Fano variety. 
Then the vertices of the anticanonical complex $A_X^c$ 
are precisely the points $v_\varrho$ and $v_\sigma'$, 
where $\varrho$ runs through the rays and $\sigma$ through 
the elementary big cones of $\Sigma$. 
In particular, for the supports of the lineality part 
and the leaves of $A_X^c$, we obtain
\begin{eqnarray*}
\vert A_X^c \sqcap \lambda \vert
& = &
\conv(v_{\varrho}, v_{\sigma}'; \ 
\varrho \in \Sigma \text{ with } \varrho \subseteq \lambda,
\ \sigma \in \Sigma \text { elementary big}),
\\
\vert A_X^c \sqcap \tau_i \vert
& = &
\conv(v_{\varrho}, v_{\sigma}'; \ 
\varrho \in \Sigma \text{ with } \varrho \subseteq \tau_i,
\ \sigma \in \Sigma \text { elementary big}).
\end{eqnarray*} 
\end{corollary}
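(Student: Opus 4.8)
The plan is to extract the vertices of $A_X^c$ from the structural results already established and then to cut the complex down to the lineality space $\lambda$ and to the individual leaves $\tau_i$. The central tool is the order-preserving bijection $\eta \mapsto \cone(\eta)$ between $A_X^c$ and $\Sigma \sqcap \trop(X)$ from the Remark following Theorem~\ref{thm:main}. Under this bijection a nonzero vertex of $A_X^c$, that is a $0$-dimensional cell other than the origin, is sent to a ray of $\Sigma \sqcap \trop(X)$; conversely, along a ray $\varrho$ of $\Sigma \sqcap \trop(X)$ the support of $A_X^c$ is the segment $\varrho \cap A_X$ running from $0$ to the leaving point, so the vertex of $A_X^c$ lying on $\varrho$ is exactly $v'_\varrho$. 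Thus the nonzero vertices of $A_X^c$ are the leaving points $v'_\varrho$ as $\varrho$ ranges over the rays of $\Sigma \sqcap \trop(X)$, while the origin lies in the relative interior of $A_X^c$ and is therefore not a vertex of its support.

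First I would identify these rays and their leaving points. By the Proposition describing the rays of $\Sigma \sqcap \trop(X)$, the rays are precisely the $\varrho \in \Sigma^{(1)}$ together with the rays $\varrho_\sigma$ for $\sigma$ elementary big. For $\varrho \in \Sigma^{(1)}$ the Proposition following Theorem~\ref{thm:main} gives $v'_\varrho = v_\varrho$; here I use that $X$ is Fano, so $-\mathcal{K}_X$ lies in the relative interior of the movable cone. For the rays $\varrho_\sigma$, part~(i) of the Proposition computing the leaving points of elementary big cones gives $v'_{\varrho_\sigma} = v'_\sigma$, where log terminality of $X$ ensures that $\varrho_\sigma$ indeed leaves $A_X$. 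Hence the vertices of $A_X^c$ are exactly the points $v_\varrho$, $\varrho \in \Sigma^{(1)}$, and $v'_\sigma$, $\sigma$ elementary big, which is the first assertion.

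For the two support formulas I would argue as follows. Since $X$ is log terminal, $A_X^c$ is bounded by Theorem~\ref{thm:main}(ii), so $|A_X^c \sqcap \lambda| = A_X \cap \lambda$ and $|A_X^c \sqcap \tau_i| = A_X \cap \tau_i$ are polytopes, each equal to the convex hull of its vertices. Because $\lambda$ and each $\tau_i$ are unions of cones of $\Sigma \sqcap \trop(X)$, the restrictions $A_X^c \sqcap \lambda$ and $A_X^c \sqcap \tau_i$ are subcomplexes of $A_X^c$; in particular their vertices are vertices of $A_X^c$, namely those lying in $\lambda$, respectively $\tau_i$. It then remains to sort the vertices $v_\varrho, v'_\sigma$ by membership: from $\varrho_\sigma = \sigma \cap \lambda \subseteq \lambda$ every $v'_\sigma$ lies in $\lambda$, and since $\lambda \subseteq \tau_i$ also in each $\tau_i$, whereas $v_\varrho$ lies in $\lambda$ (respectively $\tau_i$) precisely when $\varrho \subseteq \lambda$ (respectively $\varrho \subseteq \tau_i$). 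Substituting these lists of vertices into the two convex-hull descriptions yields the displayed equalities.

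The step I expect to be the main obstacle is the subcomplex claim, i.e.~that $\lambda$ and the $\tau_i$ meet $A_X^c$ in unions of cells rather than slicing through them; this is exactly what guarantees that the polytopes $A_X \cap \lambda$ and $A_X \cap \tau_i$ acquire no vertices beyond the vertices of $A_X^c$. Here I would invoke the explicit fan $\Sigma \sqcap \trop(X) = \{\sigma \cap \tau_i; \; \sigma \in \Sigma, \, 0 \le i \le r\}$ of Construction~\ref{rem:cpl1isstr}, for which each $\tau_i$ is visibly a union of cones and $\lambda = \bigcap_i \tau_i$ is their common face; the remaining verifications are routine bookkeeping with the two preceding Propositions.
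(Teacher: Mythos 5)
Your proposal is correct and takes essentially the same route as the paper, which states this corollary without explicit proof precisely because it is the combination you spell out: the description of the rays of $\Sigma \sqcap \trop(X)$ as the $\varrho \in \Sigma$ and the $\varrho_\sigma$, the leaving-point identities $v'_\varrho = v_\varrho$ (valid since $-\mathcal{K}_X$ lies in the relative interior of the moving cone) and $v'_{\varrho_\sigma} = v'_\sigma$, the cell--cone correspondence with $\Sigma \sqcap \trop(X)$, and boundedness of $A_X^c$ from Theorem~\ref{thm:main}(ii). Your additional care in verifying that $\lambda$ and the $\tau_i$ cut out subcomplexes (so that no new vertices arise when restricting) is exactly the bookkeeping the paper leaves implicit.
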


\begin{remark}
Let $X = X(A,P)$ be a $\QQ$-factorial Fano variety and 
$X'$ the variety arising from the tropical refinement 
$\Sigma \sqcap \trop(X)$. 
Then $A_{X'}^c$ and $A_X^c$ both generate $\Sigma \sqcap \trop(X)$ 
but do not in general coincide, because the rays $\varrho_{\sigma}$ 
of big elementary cones $\sigma \in \Sigma$ intersect the 
boundary of $A_{X'}^c$ in integral points,
whereas the intersection points $v'_{\sigma}$ 
with $A_X^c$ do not need to be integral.
\end{remark}

\begin{remark}
The anticanonical complex $A_X^c$ of a Fano variety $X = X(A,P)$
can also be obtained in the following way.
Since the defining relations $g_l$ of $\mathcal{R}(X)$ 
all have the same $K$-degree, 
we may define $A_X^c$ in a slightly different way 
by exchanging $B(g)$ for 
$$
B(g)' 
\ := \ 
(r-1) \conv(\mu_0,\ldots,\mu_r),
$$
where $\mu_0,\ldots,\mu_r$ are the exponent vectors occuring 
in $g_0,\ldots,g_{r-2}$. 
Then $(r-1) u_l - e_{\Sigma}$ is a representative of $- \mathcal{K}_X$ 
and all the proofs work 
in exactly the same way. On the pro side we note that $B(g)'$ 
does not depend on 
the enumeration of the variables $T_{\varrho}$, while $B(g)$ does. 
\end{remark}


\section{Terminal Fano threefolds}
\label{sec:terminal3folds}

Here we show how to obtain the classification of 
terminal $\QQ$-factorial Fano threefolds~$X$ 
of Picard number one coming with an effective 
action of a two-dimensional torus
given in Theorem~\ref{thm:classif}. 
First recall the following.

\begin{remark}
For any Fano variety $X$ with at most log terminal
singularities, the 
divisor class group $\Cl(X)$ is finitely generated; 
see~\cite[Sec.~2.1]{IsPr}.
If $X$ comes in addition with a torus 
action of complexity one, then $X$ is 
rational and its Cox ring is finitely 
generated;  
see~\cite[Remark~IV.4.1.5]{ArDeHaLa}.
\end{remark}

This allows us to work 
in terms of the defining data $(A,P)$ of $X$
and the notation of Constructions~\ref{constr:RAPdown}
and~\ref{constr:RAPFano},
where we always choose $P$ to be irredundant.
The main step is to derive suitable
effective bounds on the entries of $P$.
According to Theorem~\ref{thm:main}, terminality 
of $X$ is equivalent to the fact that the anticanonical 
complex $A_X^c$ contains no lattice points except 
the origin and the columns of the defining matrix~$P$.
A first observation towards bounds for the shape of $P$
is that log-terminality leads to the following situations.

\begin{lemma}
\label{lem:piconeconfigs}
Let $X = X(A,P)$ a non-toric log terminal
$\QQ$-factorial Fano threefold 
of Picard number one, where $P$ is irredundant. 
Then, after suitable admissible operations, 
$P$ suits into one of the following cases:
\begin{enumerate}
\item
 $m=0$, $r=2$ and $n=5$, where $n_0 = n_1 = 2$, $n_2 = 1$.
\item
 $m=0$, $r=3$ and $n=6$, where $n_0 = n_1 = 2$, $n_2 = n_3 = 1$.
\item
 $m=0$, $r=4$ and $n=7$, where $n_0 = n_1 = 2$, $n_2 = n_3 = n_4 = 1$.
\item
 $m=0$, $r=2$ and $n=5$, where $n_0 = 3$, $n_1 = n_2 = 1$.
\item
 $m=0$, $r=3$ and $n=6$, where $n_0 = 3$, $n_1 = n_2 = n_3 = 1$.
\item
$m=1$, $r=2$ and $n=4$, where  $n_0 = 2$, $n_1 = n_2 = 1$.
\item
$m=1$, $r=3$ and $n=5$, where  $n_0 = 2$, $n_1 = n_2 = n_3 = 1$.
\item
$m=2$, $r=2$ and $n=3$, where  $n_0 = n_1 = n_2 = 1$.
\end{enumerate}
\end{lemma}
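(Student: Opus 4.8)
The plan is to translate the geometric hypotheses into a small system of numerical constraints on the block sizes $n_0,\ldots,n_r$ and on $m$, and then to solve that system. First I would record the basic identities. Since $\dim(X)=s+1$ by Construction~\ref{constr:RAPFano}, the threefold condition forces $s=2$. As $X$ is $\QQ$-factorial, $\rk(\Pic(X))=n+m-r-s$, so Picard number one means $n+m=r+s+1=r+3$. Finally, $X$ being non-toric forces at least one defining relation, hence $r\ge 2$. Writing $I:=\{i;\ n_i>1\}$ and $n_I:=\sum_{i\in I}n_i$ as in the proof of Corollary~\ref{cor:boundrel}, the decomposition $n=n_I+(r+1-|I|)$ together with $n+m=r+3$ yields the single identity
\[
n_I-|I|+m \ = \ 2 .
\]

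Next I would produce an elementary big cone, so as to feed log terminality into the estimates. Because $X$ is Fano it is complete, and since $r\ge 2$ the projection $P_1\colon\QQ^{r+s}\to\QQ^r$ of Construction~\ref{rem:cpl1isstr} must carry the cones of $\Sigma$ onto a complete fan of $\QQ^r$; as a leaf cone $\sigma\subseteq\tau_i$ has image $P_1(\sigma)\subseteq\varrho_i$ of dimension at most one, only big cones can supply the full-dimensional cones needed to cover $\QQ^r$. Hence $\Sigma$ contains a big cone, and being simplicial by $\QQ$-factoriality it contains an elementary big face, i.e.\ an elementary big cone $\sigma$. Now Corollary~\ref{cor:logterm2lbound} applies: ordering its rays by $l_{\varrho_0}\ge\cdots\ge l_{\varrho_r}$, log terminality gives $l_{\varrho_3}=\cdots=l_{\varrho_r}=1$. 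By irredundancy (Remark~\ref{rem:ci}) any block with $n_i=1$ has $l_{i1}\ge 2$, so every ray of $\sigma$ with $l_\varrho=1$ must lie in a block of $I$; since $\sigma$ meets each block exactly once, this forces $|I|\ge r-2$.

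Combining the two inputs finishes the reduction. Each block in $I$ has size at least $2$, so $n_I\ge 2|I|$, and the identity gives $|I|\le n_I-|I|=2-m$; hence $|I|\le 2$ and, with $|I|\ge r-2$, also $r\le 4$. It then remains to solve the small system. For $m=0$ the relation $n_I-|I|=2$ leaves either $|I|=2$ with two blocks of size $2$, or $|I|=1$ with one block of size $3$; for $m=1$ it forces $|I|=1$ with one block of size $2$; and for $m=2$ it forces $|I|=0$. In each subcase $r$ ranges over $2\le r\le |I|+2$, and then $n=n_I+(r+1-|I|)$ is determined. Running through these possibilities produces exactly the eight configurations listed, and the normalization placing the blocks with $n_i>1$ at the front is realized by the column-block swaps of Remark~\ref{remark:admissibleops}.

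I expect the main obstacle to be not the enumeration, which is routine once the identity $n_I-|I|+m=2$ and the inequality $|I|\ge r-2$ are in hand, but rather the clean justification that an elementary big cone exists: one must argue carefully that completeness of the Fano variety $X$ together with $r\ge 2$ genuinely forces a big cone in $\Sigma$ before invoking Corollary~\ref{cor:logterm2lbound}. A secondary point to verify is exhaustiveness, namely that the eight listed cases exactly exhaust the solutions of the numerical system, with no omissions and no duplicates beyond those identified by the admissible operations.
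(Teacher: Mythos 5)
Your numerical reduction coincides with the paper's: $s=2$ and $n+m=r+3$ from $\QQ$-factoriality and Picard number one, $r\ge 2$ from non-toricity, and then Corollary~\ref{cor:logterm2lbound} together with irredundancy, applied to an elementary big cone, to force $|I|\ge r-2$; your solution of the resulting system (via $n_I-|I|+m=2$) is complete and matches the eight listed cases, which the paper phrases as ``at most three of the $n_i$ equal one''. The one step where you diverge is the one you yourself flag as the crux, and there your argument has a genuine gap.

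You claim that completeness of $X$ together with $r\ge 2$ forces the $P_1$-images of the cones of $\Sigma$ to cover $\QQ^r$, whence a big cone exists. This implication is false. By Tevelev's criterion, completeness of $X$ is equivalent to $\trop(X)\subseteq|\Sigma|$, and $P_1(\trop(X))=\varrho_0\cup\ldots\cup\varrho_r$ is a union of rays, far from covering $\QQ^r$; nothing in completeness prevents $|\Sigma|$ from being exactly $\trop(X)$, in which case every cone of $\Sigma$ is a leaf cone (a convex cone contained in $\tau_0\cup\ldots\cup\tau_r$ lies in a single $\tau_i$, since $\tau_i\cap\tau_j=\lambda$) and no big cone exists at all. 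Such fans really occur for complete, even Fano, complexity-one varieties: they are precisely those for which the rational quotient map is an honest fibration $X\to\PP_1$, and these exist in Picard number $\ge 2$. What excludes them here is the Picard number one hypothesis, which is exactly what the paper invokes at this point and which you never use for this purpose: if all cones of $\Sigma$ were leaf cones, the map of fans $\Sigma\to\Delta(r)$ from Construction~\ref{rem:cpl1isstr} would induce a nonconstant morphism $X\to\PP_1$, and the pullback of a general point would be a nonzero effective class with vanishing top self-intersection, hence not big --- impossible on a $\QQ$-factorial Fano with $\Cl(X)$ of rank one, where every nonzero effective class is a positive multiple of the ample anticanonical class. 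With that replacement (plus the paper's observation that, by simpliciality, a big cone has an elementary big face), your proof goes through; as written, the existence of the elementary big cone --- on which the bound $|I|\ge r-2$ and hence the entire enumeration rests --- is unproven.
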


\begin{proof}
Since $X$ is non-toric, there is at least one relation 
in the Cox ring. This implies $r\geq 2$.
Since $X$ is of Picard number one, there is an elementary 
big cone and thus Corollary~\ref{cor:boundrel} yields
$r\leq 5$. Using $n+m = \dim(X) + r$, we obtain
$$ 
2 \ \le \ r \ \le \ 5, 
\qquad
r + 1 \ \le \ n,
\qquad
n+m  = r + 3.
$$
Combining Corollary~\ref{cor:logterm2lbound} with
the fact that $P$ is irredundant, 
we see that at most three of the $n_i$ equal one. 
This leaves us with the cases listed in the assertion.
\end{proof}

We treat exemplarily Situation~(i) of 
Lemma~\ref{lem:piconeconfigs}.
This case reflects all the occurring arguments.
The final bounds on the defining matrix $P$ 
are given in Propositions~\ref{prop:l11-l12-1} 
to~\ref{prop:l21-ge3}.
For a treatment of the other situations,
see~\cite[Section~2.4]{Ni}.

\begin{proposition}
\label{prop:higherplt}
Let $X = X(A,P)$ a non-toric terminal
$\QQ$-factorial Fano threefold 
of Picard number one such that $P$ is irredundant
and we have $r=2$, $m=0$ and $n=5$, 
where $n_0 = n_1 = 2$, $n_2 = 1$.
Then $l_{01} = l_{02} = 1$ or $l_{11} = l_{12} = 1$ hold.
\end{proposition}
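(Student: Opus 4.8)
The plan is to work directly with the combinatorial data $(A,P)$ of Construction~\ref{constr:RAPdown}. In the case at hand $r=2$, $s=2$, $m=0$, and there is a single relation $g_0 = g_{0,1,2}$ whose three monomials are $T_{01}^{l_{01}}T_{02}^{l_{02}}$, $T_{11}^{l_{11}}T_{12}^{l_{12}}$ and $T_{21}^{l_{21}}$; the columns of $P$ are the five primitive generators $v_{01},v_{02}$ (lying in $\tau_0$), $v_{11},v_{12}$ (in $\tau_1$) and $v_{21}$ (in $\tau_2$). Since $X$ has Picard number one, $Z_c$ is a fake weighted projective space, so every triple of columns of $P$ spans a cone of $\Sigma_c$; hence the four cones $\sigma_{ab} := \varrho_{0a}+\varrho_{1b}+\varrho_{21}$ with $a,b\in\{1,2\}$ are exactly the elementary big cones of $\Sigma$. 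First I would pass to a contradiction hypothesis: using the column swaps of Remark~\ref{remark:admissibleops} I order $l_{01}\ge l_{02}$ and $l_{11}\ge l_{12}$, and then assume, contrary to the claim, that $l_{01}\ge 2$ and $l_{11}\ge 2$.

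The driving tool is the terminality criterion of Theorem~\ref{thm:main}(v): $X$ is terminal exactly when the origin and the $v_\varrho$ are the only lattice points of $A_X^c$. I would use it through its lineality shadow. By the Proposition on the lineality part of $A_X^c$, terminality forces the origin to be the only lattice point of the polygon $\vert A_{X,0}^c\vert \subseteq \lambda \cong \QQ^2$, and by Corollary~\ref{thm:cpl1antican} this polygon is the convex hull of the four leaving points $v'_{\sigma_{ab}} = \ell_{\sigma_{ab}}^{-1}v_{\sigma_{ab}}$, with $v_{\sigma_{ab}}$ and $\ell_{\sigma_{ab}}$ as in Definition~\ref{def:ellsigma}. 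Thus the whole statement reduces to the two-dimensional assertion that this quadrilateral around the origin must contain a further lattice point whenever both blocks are fat.

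To control the four vertices I would invoke Corollary~\ref{cor:logterm2lbound}: applied to $\sigma_{11}$, whose exponent triple $(l_{01},l_{11},l_{21})$ has its two largest entries $\ge 2$, it leaves only the platonic possibilities (up to order) $(l_{01},l_{11},1)$, $(l_{01},2,2)$, $(3,3,2)$, $(4,3,2)$, $(5,3,2)$, together with the corresponding explicit values of $\ell_{\sigma_{11}}$ read off from that corollary; the terminal discrepancy bound $\ell_\sigma > c_\sigma$ from Corollary~\ref{cor:sing2lbound}(iv) (equivalently $a_{\varrho_\sigma}>0$) then eliminates $(5,3,2)$ and constrains $c_{\sigma_{11}}$ in the survivors. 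The same inputs bound $\ell_{\sigma_{ab}}$ for the three remaining cones. The genuinely delicate point is that the position of each $v'_{\sigma_{ab}}$ in $\lambda$ still depends on the lower rows of $P$ via $v_{\sigma_{ab}} = l_{1b}l_{21}\,d_{0a}+l_{0a}l_{21}\,d_{1b}+l_{0a}l_{1b}\,d_{21}$. I would strip this dependence off by normalising the $d$-block with the admissible row operations (iii) and (iv) of Remark~\ref{remark:admissibleops} to a small canonical form, so that only finitely many shapes of the quadrilateral remain.

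The main obstacle is precisely this last step: exhibiting the forbidden lattice point uniformly once the exponents and the normalised $d$-rows are fixed. I expect the contradiction to arise from a lattice point forced strictly inside $\vert A_{X,0}^c\vert$, for instance a suitable convex combination of two opposite leaving points $v'_{\sigma_{11}},v'_{\sigma_{22}}$ (or $v'_{\sigma_{12}},v'_{\sigma_{21}}$), whose integrality is guaranteed by the divisibility relations among the $\ell_{\sigma_{ab}}$ and the entries of the $v_{\sigma_{ab}}$. Should a fully uniform argument turn out to be unwieldy, the platonic classification has already cut the exponents down to a short finite list, so the residual cases can be closed by the explicit lattice-point test behind Theorem~\ref{thm:main}(v), that is, by the same computation automated in~\cite{MDS}.
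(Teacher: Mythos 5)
Your setup and your first stage do match the paper's argument: after ordering the $l_{ij}$ by admissible operations, the elementary big cones are exactly the four cones $\cone(v_{0a},v_{1b},v_{21})$, and the exceptional platonic triples $(3,3,2)$, $(4,3,2)$, $(5,3,2)$ are eliminated essentially as you indicate --- the paper checks by a divisibility argument on the numerators of $v'_{\sigma}$ that $c_\sigma \ge \ell_\sigma$, so the vertex $v'_\sigma$ of the lineality part is integral, contradicting Theorem~\ref{thm:main}(v). The genuine gap is your second stage. Your claimed reduction --- that under $l_{01},l_{11}\ge 2$ terminality must already be violated by a nonzero lattice point of the two-dimensional polygon $\vert A_{X,0}^c\vert$ --- is false. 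After the exceptional triples are removed, the surviving configurations $(l_{01},l_{02},l_{11},l_{12},l_{21})$ are of the shapes $(2,1,2,1,l_{21})$, $(2,1,2,2,l_{21})$, $(2,2,2,2,l_{21})$, $(l_{01},l_{02},2,1,2)$, $(l_{01},l_{02},2,2,2)$, and for these the lineality polygon can very well have the origin as its only lattice point: for instance in the first shape, integrality of the vertex $u_1$ only forces $l_{21}$ to be odd, and once $l_{21}$ is odd the four half-integral vertices $u_1,\ldots,u_4$ may bound a polygon free of nonzero lattice points for suitable entries $d_{ijk}$. That is precisely why the paper's proof leaves the lineality space at this point: it produces a lattice point on an edge of $A_X^c$ inside a \emph{leaf}, namely $\tfrac{l_{21}-1}{l_{21}}u_1+\tfrac{1}{l_{21}}v_{21}$ on the segment joining the column $v_{21}$ to the half-integral vertex $u_1$ of $A_{X,0}^c$. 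No convex combination of the leaving points $v'_{\sigma_{ab}}$ alone --- which is all your argument allows --- can detect this point, since it does not lie in $\lambda$.

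Your fallback does not repair this, because the surviving cases are not a finite list: the platonic constraint still leaves the infinite families $(l_{01},2,2)$ and $(2,2,l_{21})$ with $l_{01}$, respectively $l_{21}$, unbounded, and the entries $d_{ijk}$ are likewise unbounded at this stage of the argument (bounding them is exactly the content of the later Propositions~\ref{prop:l11-l12-1} to~\ref{prop:l21-ge3}, which need the volume bounds of~\cite{AvKrNi} and, crucially, already presuppose the present proposition). For the same reason the hope of normalising the $d$-block by the operations of Remark~\ref{remark:admissibleops} into ``finitely many shapes'' cannot work: those operations leave infinitely many inequivalent lower rows. So no terminating computation with~\cite{MDS} closes these cases; what is needed is a uniform symbolic argument in the leaves, parametrised by the free entries $l_{21}$ (or $l_{01}$) and $d_{ijk}$, exactly as the paper carries out.
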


\begin{proof}
Since $P$ is irredundant, we have $l_{21} \ge 2$.
Moreover, by suitable admissible operations, we achieve
$l_{01} \ge l_{11} \ge l_{12}$, 
$l_{01} \ge l_{02}$.
In total, $P$ is of the form
$$ 
P 
\ = \ 
\left[
\begin{array}{rrrrr}
-l_{01} & -l_{02} & l_{11} & l_{12} & 0
\\
 -l_{01} & -l_{02} & 0 & 0 & l_{21} 
\\
d_{101} & d_{102} & d_{111} & d_{112} & d_{121}
\\
d_{201} & d_{202} & d_{211} & d_{212} & d_{221} 
\end{array}
\right].
$$
We have to show that in the case $l_{11} > 1$,
no terminal $X = X(A,P)$ is left.
According to Corollary~\ref{cor:logterm2lbound}, 
this means to treat the following configurations 
of the~$l_{ij}$:
\begin{center}
\begin{tabular}{r|r|r}
$(l_{01},l_{11},l_{21})$ & $l_{02}$ & $l_{12}$ 
\\
\hline
$(l_{01},2,2)$ & $\le l_{01}$ & $\le 2$ 
\\
\hline
$(2,2,l_{21})$ & $\le 2$ & $\le 2$ 
\\
\hline
$(3,3,2)$ & $\le 3$ & $\le 3$ 
\\
\hline
$(3,2,3)$ & $\le 3$ & $\le 2$ 
\\
\hline
$(4,3,2)$ & $\le 4$ & $\le 3$ 
\\
\hline
$(4,2,3)$ & $\le 4$ & $\le 2$ 
\\
\hline
$(3,2,4)$ & $\le 3$ & $\le 2$ 
\\
\hline
$(5,3,2)$ & $\le 5$ & $\le 3$ 
\\
\hline
$(5,2,3)$ & $\le 5$ & $\le 2$ 
\\
\hline
$(3,2,5)$ & $\le 3$ & $\le 2$ 
\end{tabular}
\end{center}
We first consider the linearity part $A_{X,0}^c$ of 
the anticanonical complex $A_{X}^c$.
Corollary~\ref{thm:cpl1antican} allows an explicit computation.
For the vertex $u$ of $A_{X,0}^c$ defined by the cone 
$\sigma$ corresponding to the platonic triples from the left 
column of the table above we obtain coordinates
$$
u 
\ = \
\Bigl( 
0,0,
\frac{l_{01}l_{11}d_{121}+l_{01}l_{21}d_{111}+l_{11}l_{21}d_{101}}%
{l_{01}l_{11}+l_{01}l_{21}+l_{11}l_{21}-l_{01}l_{11}l_{21}},
\frac{l_{01}l_{11}d_{221}+l_{01}l_{21}d_{211}+l_{11}l_{21}d_{201}}%
{l_{01}l_{11}+l_{01}l_{21}+l_{11}l_{21}-l_{01}l_{11}l_{21}} 
\Bigr).
$$
The (common) denominator of these coordinates is
the $\ell_\sigma$ from Corollary~\ref{cor:logterm2lbound}.
For triples of type $(5,3,2)$ we have $\ell_\sigma=1$
and thus $u$ is integral.
For triples of type $(4,3,2)$ we have $\ell_\sigma=2$
and the numerators are even,
because every summand is a multiple of $2$ or $4$. 
Thus, $u$ is integral again.
Similarly, for triples of type $(3,3,2)$, 
we have $\ell_\sigma=3$, the numerators are multiples 
of $3$ and $u$ is integral.
By Theorem~\ref{thm:main} this contradicts the
terminality of~$X$ and we are left with the configurations
\begin{center}
\begin{tabular}{r|r|r|r|r}
$l_{01}$ & $l_{02}$ & $l_{11}$ & $l_{12}$ & $l_{21}$ 
\\
\hline
$2$ & $1$ & $2$ & $1$ & $l_{21}$ 
\\
$2$ & $1$ & $2$ & $2$ & $l_{21}$ 
\\
$2$ & $2$ & $2$ & $2$ & $l_{21}$ 
\\
$l_{01}$ & $l_{02}$ & $2$ & $1$ & $2$ 
\\
$l_{01}$ & $l_{02}$ & $2$ & $2$ & $2$ 
\end{tabular}
\end{center}
In each of the cases, we detect a lattice
point on an edge of $A_X^c$ located in a 
leaf, contradicting again terminality.
The procedure is the same
for all configurations; 
we treat exemplarily the first one. 
There, after suitable admissible operations, 
the matrix $P$ is of the form
$$ 
P 
\ = \ 
\left[
\begin{array}{rrrrr}
-2 & -1 & 2 & 1 & 0
\\
 -2 & -1 & 0 & 0 & l_{21} 
\\
1 & 0 & d_{111} & 0 & d_{121}
\\
0 & 0 & d_{211} & 0 & d_{221} 
\end{array}
\right].
$$
According to Corollary~\ref{thm:cpl1antican},
the vertices of the support of the lineality 
part $A_{X,0}^c$ of the anticanonical complex 
$A_{X}^c$ are given by  
\begin{eqnarray*}
u_1 
& := & 
\left(           
0,\ 0, \
\frac{l_{21}}{2}+\frac{l_{21}}{2}d_{111}+d_{121}
,\
\frac{l_{21}}{2}d_{211}+d_{221}
\right),
\\
u_2
& := &
\left( 
0,\ 0, \
\frac{l_{21}d_{111}+2d_{121}}{l_{21} +2}
,\
\frac{l_{21}d_{211}+2d_{221}}{l_{21} + 2}
\right), 
\\
u_3 
& := &
\left(          
0,\ 0, \
\frac{l_{21}+2d_{121}}{l_{21} + 2}
,\
\frac{2d_{221}}{l_{21} + 2}
\right),
\\
u_4 
& := &
\left(          
0,\ 0, \
\frac{d_{121}}{l_{21} + 1}
,\
\frac{d_{221}}{l_{21} + 1}
\right).
\end{eqnarray*}
Note that $l_{21}$ is odd, because otherwise $u_1$ would 
be a lattice point.
Using once again Corollary~\ref{thm:cpl1antican}, we obtain the 
following explicit description of the second leaf:
$$
\vert A_{X}^c \sqcap \tau_2 \vert  
\ = \ 
\conv(v_{21},u_1,u_2,u_3,u_4),
$$
where $v_{21}$ denotes the last column of the matrix $P$.
Using the fact that $l_{21}$ is odd, we see that on 
the edge connecting $v_{21}$ to $u_1$
lies at least one lattice point, namely
$$
\frac{l_{21}-1}{l_{21}} u_1 + \frac{1}{l_{21}} v_{21} 
\ = \ 
\left( 
0,\ 1, \
d_{121}+(d_{111}+1)\frac{l_{21}-1}{2}
,\
d_{221}+d_{211}\frac{l_{21}-1}{2}
\right).
$$
Similarly, we find in the remaining cases such a 
point on an edge of $A_{X}^c$ connecting a half-integral vertex 
of $A_{X,0}^c$ with a column of $P$ containing one 
of the not yet fixed~$l_{ij}$.
\end{proof}

As a consequence of Proposition~\ref{prop:higherplt}, 
we can focus our search for terminal varieties 
$X(A,P)$ on defining matrices $P$ of the following type.

\begin{setting}
\label{set:tower221bounds}
Let $X = X(A,P)$ be a non-toric terminal
$\QQ$-factorial Fano threefold of Picard 
number one,
such that $P$ is irredundant
with $r=2$, $m=0$ and $n=5$, where $n_0 = n_1 = 2$, 
$n_2 = 1$.
Assume that $l_{01} = l_{02} = 1$ holds.
Then, after suitable admissible operations, $P$ is of 
the form
$$ 
P 
\ = \ 
\left[
\begin{array}{rrrrr}
-1 & -1 & l_{11} & l_{12} & 0
\\
 -1 & -1 & 0 & 0 & l_{21} 
\\
0 & 1 & d_{111} & d_{112} & d_{121}
\\
0 & 0 & d_{211} & d_{212} & d_{221} 
\end{array}
\right],
$$
where $l_{11} \ge l_{12}$ and $l_{21} \ge 2$ hold.
Moreover, denoting by $P_{ij}$ the matrix obtained by
removing the column $v_{ij}$ from $P$, we have 
positive \emph{weights}
$$
w_{01} \ := \ \det(P_{01}),
\qquad
w_{02} \ := \ -\det(P_{02}),
$$
$$
w_{11} \ := \ \det(P_{11}),
\quad
w_{12} \ := \ -\det(P_{12}),
\quad
w_{21} \ := \ \det(P_{21}).
$$
Observe that the weight vector 
$(w_{01},w_{02},w_{11},w_{12},w_{21})$ 
lies in the kernel of $P$.
The last three weights are explicitly given by 
$$ 
w_{11} \ = \ -l_{21}d_{212} - l_{12}d_{221},
\quad
w_{12} \ = \ l_{21}d_{211} + l_{11}d_{221},
\quad
w_{21} \ = \ -l_{11}d_{212} + l_{12}d_{211}
$$
and the first two weights can be expressed in a compact 
form in terms of the others as follows:
$$
w_{02} \ = \ -d_{111}w_{11}-d_{112}w_{12}-d_{121}w_{21},
\qquad 
w_{01} \ = \ l_{21}w_{21} - w_{02} .
$$
\end{setting}

\begin{remark}
\label{rem:tower221bounds0}
In Setting~\ref{set:tower221bounds},
we can achieve by further admissible 
operations without changing the shape 
of $P$ the following for the entries 
of the third and fourth row of $P$: 
$$ 
0 \ \le \ d_{121}, d_{221} \ < \ l_{21},
\qquad
d_{121}  \ < \ d_{221} \text{ if } d_{221} \ne 0,
\qquad
0 \ \leq \ d_{112} \ < \ w_{11},
$$
$$
-\frac{(l_{21}+d_{121})w_{21}+d_{112}w_{12}}{w_{11}}
\ < \ 
d_{111}  
\ < \ 
-\frac{d_{121}w_{21}+d_{112}w_{12}}{w_{11}}.
$$
For the third estimate we add
a suitable multiple of $d_{221}(p_1 - p_2) + l_{21}p_4$  
to~$p_3$, where $p_i$ denotes the $i$-th row of $P$
(this preserves the first two estimates).
The inequalities for $d_{111}$ follow directly from 
$w_{02} >0$ and $w_{01}  > 0$.
\end{remark}

A first series of bounds on the entries of the defining matrix
$P$ is derived from the fact that, by terminality, 
the lineality part $A_{X,0}^c$ of the anticanonical complex 
$A_X^c$ has the origin as its only lattice point;
we also write $A_{X,0}^c$ for the support of the 
lineality part, which in our situation is a rational 
two-dimensional polytope. Here is how it precisely looks.

\begin{lemma}
\label{lem:tower221bounds1}
Let $X = X(A,P)$ be as in Setting~\ref{set:tower221bounds}.
The vertices of $A_{X,0}^c$ regarded as a subset of the lineality space 
$\QQ^2$ of the tropical variety are  
\begin{eqnarray*}
u_1 
& := & 
\left[           
\frac{l_{21} d_{111} + l_{11} d_{121}}{l_{21} + l_{11}}
,\
\frac{l_{21}d_{211} + l_{11}d_{221}}{l_{21} + l_{11}}
\right],
\\
u_2
& := &
\left[           
\frac{l_{11}l_{21} + l_{21} d_{111} + l_{11} d_{121}}{l_{21} + l_{11}}
,\
\frac{l_{21}d_{211} + l_{11}d_{221}}{l_{21} + l_{11}}
\right], 
\\
u_3 
& := &
\left[           
\frac{l_{21} d_{112} + l_{12} d_{121}}{l_{21} + l_{12}}
,\
\frac{l_{21}d_{212} + l_{12}d_{221}}{l_{21} + l_{12}}
\right],
\\
u_4 
& := &
\left[           
\frac{l_{12}l_{21} + l_{21} d_{112} + l_{12} d_{121}}{l_{21} + l_{12}}
,\
\frac{l_{21}d_{212} + l_{12}d_{221}}{l_{21} + l_{12}}
\right].
\end{eqnarray*}
\end{lemma}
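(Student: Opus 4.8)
The plan is to read the vertices off Corollary~\ref{thm:cpl1antican}, which describes the vertices of $A_X^c$ as the primitive generators $v_\varrho$ together with the leaving points $v'_\sigma$ of the elementary big cones $\sigma$, and which for the lineality part retains exactly those $v_\varrho$ with $\varrho\subseteq\lambda$ together with all the $v'_\sigma$. In the present configuration no column of $P$ lies in $\lambda$: each $v_{ij}$ has a nonzero entry among its first two coordinates, because it generates a ray inside one of $\tau_0,\tau_1,\tau_2$, whereas $\lambda$ is cut out by the vanishing of those coordinates. Thus only the points $v'_\sigma$ survive, and the task reduces to enumerating the elementary big cones and evaluating their leaving points.

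First I would enumerate the elementary big cones. By definition such a cone has no ray in $\lambda$ and exactly one ray in each $\tau_i$. Here $\tau_2$ contains only $\cone(v_{21})$, while $\tau_0$ and $\tau_1$ contain the two rays $\cone(v_{01}),\cone(v_{02})$ and $\cone(v_{11}),\cone(v_{12})$ respectively. Since $X$ has Picard number one and all weights $w_{ij}$ are positive, $Z_c$ is a fake weighted projective space: the maximal cones of $\Sigma_c$ are obtained by deleting a single column of $P$, and $\Sigma$ retains those whose relative interior meets $\trop(X)$, namely the four big ones, the cone omitting $v_{21}$ having no ray in $\tau_2$ and being discarded. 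Passing to elementary big faces then shows that precisely the four cones $\cone(v_{0a},v_{1b},v_{21})$ with $a,b\in\{1,2\}$ occur.

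Finally I would compute $v'_\sigma=\ell_\sigma^{-1}v_\sigma$ for each of the four cones using Definition~\ref{def:ellsigma}. For $\sigma=\cone(v_{0a},v_{1b},v_{21})$ the normalization $l_{01}=l_{02}=1$ makes the coefficients transparent: $\ell_{\sigma,\varrho_0}=l_{1b}l_{21}$, $\ell_{\sigma,\varrho_1}=l_{21}$ and $\ell_{\sigma,\varrho_2}=l_{1b}$, whence $\ell_\sigma=l_{1b}+l_{21}$. Forming $v_\sigma=\ell_{\sigma,\varrho_0}v_{0a}+\ell_{\sigma,\varrho_1}v_{1b}+\ell_{\sigma,\varrho_2}v_{21}$, the first two coordinates cancel, reconfirming $v_\sigma\in\lambda$ as predicted by the structural statement $\sigma\cap\lambda=\varrho_\sigma$, and dividing the last two coordinates by $\ell_\sigma$ yields exactly $u_1,\dots,u_4$, with $u_1,u_3$ coming from $a=1$ and $u_2,u_4$ from $a=2$. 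The extra summand $l_{1b}l_{21}$ distinguishing $u_2,u_4$ from $u_1,u_3$ comes solely from the entry $1$ in the third row of the column $v_{02}$.

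The computation itself is routine linear algebra; the only step demanding genuine care is the enumeration of the elementary big cones, that is, verifying that all four combinations really arise as cones of $\Sigma$ and that $\tau_2$ forces the ray $v_{21}$. This is precisely where the Picard-number-one hypothesis enters, through the explicit fake-weighted-projective description of $\Sigma_c$.
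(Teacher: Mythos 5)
Your proposal is correct and follows essentially the same route as the paper: the paper's entire proof is the one-line remark that one ``just computes'' the lineality part via Corollary~\ref{thm:cpl1antican}, and your argument is exactly that computation carried out in detail (checking no column of $P$ lies in $\lambda$, enumerating the four elementary big cones $\cone(v_{0a},v_{1b},v_{21})$, and evaluating $v'_\sigma=\ell_\sigma^{-1}v_\sigma$). Your filled-in details — the fake weighted projective space description of $\Sigma_c$, the exclusion of the cone omitting $v_{21}$, and the cancellation of the first two coordinates — are all sound and in fact more explicit than what the paper records.
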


\begin{proof}
We just compute the lineality part $A_{X,0}^c$ of the 
anticanonical complex $A_{X}^c$ according to Corollary~\ref{thm:cpl1antican}.
\end{proof}

\begin{remark}
\label{rem:trapez}
Observe that $A_{X,0}^c$ as described in 
Lemma~\ref{lem:tower221bounds1}
is a trapezoid.
The edges $g_1 := \overline{u_1u_2}$ and 
$g_2 := \overline{u_3u_4}$ are parallel to the $x$-axis 
and the remaining two edges are $\overline{u_1u_3}$ 
and $\overline{u_2u_4}$.
Length and $y$-value $h(g_i)$ of the line segments $g_i$ 
are
$$ 
\vert g_1 \vert 
\ = \ 
\frac{l_{11}l_{21}}{l_ {11}+l_{21}},
\quad
h(g_1) 
\ = \ 
\frac{w_{12}}{l_{11} + l_{21}},
\qquad
\vert g_2 \vert 
\ = \ 
\frac{l_{12}l_{21}}{l_ {12}+l_{21}},
\quad
h(g_2) 
\ = \ 
-\frac{w_{11}}{l_{12} + l_{21}}.
$$
Since we assume $l_{11}\ge l_{12}$ in Setting~\ref{set:tower221bounds}, 
the lower segment $g_2$ is shorter than the upper segment $g_1$.
Note that the values $\vert g_i \vert$ and $h(g_i)$ are invariant
under admissible row operations of type~\ref{remark:admissibleops}~(iii).
\end{remark}

\begin{lemma}
\label{lem:tower221bounds2}
Let $X = X(A,P)$ be as in Setting~\ref{set:tower221bounds}.
Let $h := h(g_1)-h(g_2)$ denote the total height of the 
trapezoid $A_{X,0}^c$. Then we have 
$$
\frac{l_{12}l_{21}}{l_{12}+l_{21}} 
\ < \ 
2,
\qquad
\frac{l_{11}l_{21}}{l_{11}+l_{21}}
\ < \ 
\frac{2(l_{12}+l_{21})-l_{12}l_{21}}{w_{11}}
\cdot 
h 
+
\frac{l_{12}l_{21}}{l_{12}+l_{21}}.
$$
Moreover, one has the following estimates
$$
w_{01} \ <  \ w_{11} + w_{12} + w_{21},
\qquad\qquad
w_{02} \ <  \ w_{11} + w_{12} + w_{21}.
$$
\end{lemma}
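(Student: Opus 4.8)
The plan is to read all four bounds off a single horizontal slice of the trapezoid $A_{X,0}^c$. By Theorem~\ref{thm:main}(v), the lattice points of $A_X^c$ are the origin and the primitive generators $v_\varrho$; since $m=0$ no column of $P$ lies in the lineality space $\lambda \cong \QQ^2$, so intersecting with $\lambda$ shows that the trapezoid $A_{X,0}^c = A_X^c \sqcap \lambda$ has the origin as its \emph{only} lattice point, and by Theorem~\ref{thm:main}(i) the origin is an interior point of it. First I would recall from Lemma~\ref{lem:tower221bounds1} and Remark~\ref{rem:trapez} the concrete shape: the two edges parallel to the $x$-axis are the top edge $g_1$ at height $h(g_1) = w_{12}/(l_{11}+l_{21}) > 0$ and the bottom edge $g_2$ at height $h(g_2) = -w_{11}/(l_{12}+l_{21}) < 0$, with $|g_1| = l_{11}l_{21}/(l_{11}+l_{21}) \ge |g_2| = l_{12}l_{21}/(l_{12}+l_{21})$ since $l_{11}\ge l_{12}$. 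Because the two slanted edges are segments, the length $w(y)$ of the horizontal slice at height $y$ is affine in $y$, increasing from $w(h(g_2)) = |g_2|$ to $w(h(g_1)) = |g_1|$.

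The key object is the slice at $y=0$, a segment $[a,b]\times\{0\}$. As the origin is interior we have $a<0<b$, and as $(\pm 1,0)$ are lattice points they cannot lie in $A_{X,0}^c$, forcing $-1<a$ and $b<1$, whence $w(0)=b-a<2$. Since $0$ lies between $h(g_2)$ and $h(g_1)$ and $w$ is increasing, $|g_2| = w(h(g_2)) \le w(0) < 2$, which is the first displayed inequality. Writing $w(0)$ as the affine interpolation $|g_2| + \frac{-h(g_2)}{h}(|g_1|-|g_2|)$ with $h = h(g_1)-h(g_2)$, substituting the explicit values from Remark~\ref{rem:trapez}, and rearranging $w(0)<2$ yields $|g_1| < \frac{2(l_{12}+l_{21})-l_{12}l_{21}}{w_{11}}\, h + |g_2|$, the second displayed inequality.

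For the weight estimates I would use the sharper, non-symmetric information $-1<a$ and $b<1$ separately rather than only their sum. With $r=2$ and $l_{01}=l_{02}=1$, Remark~\ref{rem:fanoRAP} gives $-\mathcal{K}_X = \kappa(A,P) = w_{11}+w_{12}+w_{21}$; call this $W$. The endpoints $a,b$ are the intersections of the $x$-axis with the slanted edges $\overline{u_1u_3}$ and $\overline{u_2u_4}$, i.e.\ the convex combinations of the vertex coordinates of Lemma~\ref{lem:tower221bounds1} with weights $h(g_1)/h$ and $-h(g_2)/h$. Substituting these coordinates, clearing denominators, and simplifying with the explicit expressions $w_{11} = -l_{21}d_{212}-l_{12}d_{221}$, $w_{12} = l_{21}d_{211}+l_{11}d_{221}$, $w_{21} = -l_{11}d_{212}+l_{12}d_{211}$ together with $w_{02} = -d_{111}w_{11}-d_{112}w_{12}-d_{121}w_{21}$ and $w_{01} = l_{21}w_{21}-w_{02}$ from Setting~\ref{set:tower221bounds}, the conditions $b<1$ and $-1<a$ turn into $w_{01}<W$ and $w_{02}<W$.

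I expect the genuine obstacle to be exactly this last translation. The geometric reductions, namely the interiority of the origin, the exclusion of $(\pm 1,0)$, and the affinity and monotonicity of the slice width, are clean and essentially immediate; but converting the two endpoint inequalities into the compact form $w_{0j}<W$ requires a careful, sign-sensitive computation through the definitions of the $u_i$ and of the weights, and one must also check that $(\pm 1,0)$ really are the nearest lattice points whose exclusion governs these bounds. By contrast, once $w(0)<2$ and $|g_2|\le w(0)$ are established, the first two displayed inequalities drop out with no further work.
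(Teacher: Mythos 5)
Your proposal is correct and takes essentially the same route as the paper's own proof: both slice the trapezoid $A_{X,0}^c$ at $y=0$, use terminality to exclude $(\pm 1,0)$ and hence bound the slice width by $2$, exploit affinity of the width function to deduce the first two inequalities, and obtain the weight estimates by explicitly computing the slice endpoints as $-w_{02}/(w_{11}+w_{12}+w_{21})$ and $w_{01}/(w_{11}+w_{12}+w_{21})$ (the paper labels the endpoints in the opposite order, which is immaterial). The additional scaffolding you supply --- invoking Theorem~\ref{thm:main}~(v) and (i) to justify that the origin is the unique lattice point and is interior, and the identification of $w_{11}+w_{12}+w_{21}$ with $\kappa(A,P)$ --- is sound but only makes explicit what the paper's terser proof tacitly assumes.
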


\begin{proof}
For the first inequality, note that the 
lower bounding segment $g_2$ of $A_{X,0}^c$ is of 
length at most 2, because otherwise the 
segment $A_{X,0}^c \cap \{y=0\}$ is of length at least 
2 as well, which would imply existence of lattice points different 
from the origin in $A_X^c$. 
Similarly, since $A_{X,0}^c \cap \{y=0\}$ has length strictly smaller 
than 2, we arrive at the second inequality: 
$$
\vert g_1 \vert
\ < \ 
\frac{2- \vert g_2 \vert}{\vert h(g_2) \vert} \cdot  h 
+ \vert g_2 \vert.
$$
Explicitly computing $A_{X,0}^c \cap \{y=0\}$
gives the bounding $x$-values
$-w_{01} / (w_{11} + w_{12} + w_{21})$ and 
$w_{02} / (w_{11} + w_{12} + w_{21})$. 
Since the origin is the only lattice point in $A_{X,0}^c \cap \{y=0\}$,
we arrive at estimates number three and four.
\end{proof}

\begin{lemma}
\label{lem:tower221bounds7}
Let $X = X(A,P)$ be as in Setting~\ref{set:tower221bounds}.
If $l_{21} \ge 3$ holds, then we obtain the estimate
$$ 
l_{12} \ < \ \frac{l_{21}+2}{l_{21}-2} \ \le \ 5.
$$
\end{lemma}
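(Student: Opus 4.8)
The plan is to read off terminality from the second leaf of the anticanonical complex and to slice the resulting pyramid at integral heights. By Corollary~\ref{thm:cpl1antican} the leaf $A_X^c \sqcap \tau_2$ equals $\conv(v_{21},u_1,u_2,u_3,u_4)$, a pyramid whose apex is the last column $v_{21}$ of $P$ and whose base is the trapezoid $A_{X,0}^c$ with the vertices $u_1,\ldots,u_4$ from Lemma~\ref{lem:tower221bounds1}. Writing a point of $\QQ^4$ as $(x_1,x_2,x_3,x_4)$, the base sits at height $x_2=0$ inside the lineality space $\lambda$, while the apex $v_{21}=(0,l_{21},d_{121},d_{221})$ sits at the integral height $x_2=l_{21}$. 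By Theorem~\ref{thm:main}(v) the only lattice points of this pyramid are the origin and $v_{21}$; in particular, for each integer $t$ with $1\le t\le l_{21}-1$ the slice $S_t=\tfrac{l_{21}-t}{l_{21}}A_{X,0}^c+\tfrac{t}{l_{21}}(d_{121},d_{221})$ at height $x_2=t$ must be free of points of $\ZZ^2$.

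First I would rewrite the target. Since $l_{21}\ge 3$ gives $l_{21}-2>0$, the asserted bound $l_{12}<\tfrac{l_{21}+2}{l_{21}-2}$ is equivalent to $(l_{12}-1)(l_{21}-2)<4$, and $\tfrac{l_{21}+2}{l_{21}-2}\le 5$ is immediate because $M\mapsto\tfrac{M+2}{M-2}$ is decreasing with value $5$ at $M=3$. So it suffices to show that terminality forces $(l_{12}-1)(l_{21}-2)<4$. Arguing by contradiction, as in Proposition~\ref{prop:higherplt}, I would assume $(l_{12}-1)(l_{21}-2)\ge 4$ and produce a forbidden lattice point. The natural place to look is the shorter parallel edge $g_2=\overline{u_3u_4}$ of the base, whose length is $\tfrac{l_{12}l_{21}}{l_{12}+l_{21}}$ by Remark~\ref{rem:trapez}: its lift into $S_t$ is a segment of constant $x_4$ and length $\tfrac{(l_{21}-t)l_{12}}{l_{12}+l_{21}}$, and a closed segment of length at least $1$ lying at an integral $x_4$-level must meet $\ZZ^2$. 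Feeding the assumed inequality into this mechanism at a suitable height $t$ (and, if necessary, using the full width of the slice $S_t$ rather than only the edge $g_2$) should exhibit the unwanted point.

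The hard part will be the integrality in the fourth coordinate: for the point produced above to be genuinely integral one needs its $x_4$-value to be an integer, and whether this occurs depends on the entries $d_{1\bullet},d_{2\bullet}$ of $P$, not merely on $l_{12}$ and $l_{21}$. I would treat this as in Proposition~\ref{prop:higherplt}, first applying the admissible normalizations of Remark~\ref{rem:tower221bounds0} (in particular $0\le d_{221}<l_{21}$) to fix the relevant residues, and then evaluating, along the edge $\overline{u_3v_{21}}$ or $\overline{u_4v_{21}}$, the explicit candidate $\tfrac{l_{21}-t}{l_{21}}u_3+\tfrac{t}{l_{21}}v_{21}$, whose common $x_4$-value is $\tfrac{(l_{21}-t)d_{212}+(l_{12}+t)d_{221}}{l_{12}+l_{21}}$; the whole difficulty is to choose $t\in\{1,\ldots,l_{21}-1\}$ making both lineality coordinates integral. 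Once such a $t$ is available the length estimate forces $S_t$ to contain a nonzero lattice point different from $v_{21}$, contradicting Theorem~\ref{thm:main}(v) and hence excluding $(l_{12}-1)(l_{21}-2)\ge 4$. Throughout, the coarse bound $\tfrac{l_{12}l_{21}}{l_{12}+l_{21}}<2$ from Lemma~\ref{lem:tower221bounds2} already restricts the residual cases to a short finite list that can be checked by the same lattice-point inspection.
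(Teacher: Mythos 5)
Your geometric frame is correct -- by Corollary~\ref{thm:cpl1antican} the second leaf is indeed the pyramid $\conv(v_{21},u_1,u_2,u_3,u_4)$, terminality via Theorem~\ref{thm:main}(v) forbids lattice points in the slices $S_t$ at integral heights $1\le t\le l_{21}-1$, and the reformulation of the claim as $(l_{12}-1)(l_{21}-2)<4$ is right -- but the proof has a genuine gap exactly at the point you flag as ``the whole difficulty'', and it is not a technicality. Write $N:=l_{12}+l_{21}$ and $D:=d_{221}-d_{212}$. Since $l_{21}-t\equiv-(l_{12}+t)\pmod N$, the $x_4$-level $\tfrac{(l_{21}-t)d_{212}+(l_{12}+t)d_{221}}{N}$ of the lower edge of $S_t$ is integral if and only if $(l_{12}+t)D\equiv 0\pmod N$, i.e.\ if and only if $l_{12}+t$ is a multiple of $N/\gcd(D,N)$. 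If $\gcd(D,N)=1$, the only solution modulo $N$ is $t\equiv l_{21}$, which is the apex; so for such matrices $P$ there is \emph{no} admissible height $t$ at all, and the edge-$g_2$ mechanism produces nothing. Nothing in Setting~\ref{set:tower221bounds} or Remark~\ref{rem:tower221bounds0} constrains $\gcd(D,N)$, so this case cannot be excluded; one would have to switch to the upper edge or to interior points of $S_t$, which you only gesture at. (In the paper's proof of Proposition~\ref{prop:higherplt} this kind of integrality does work, but only because all $l_{ij}$ and most $d_{ijk}$ are pinned down there; here they are free.) Even when a valid $t$ exists, you still need it small enough that the edge length $\tfrac{(l_{21}-t)l_{12}}{N}$ is at least $1$, which is a second unverified constraint.

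Your fallback is also not available: the coarse bound $\tfrac{l_{12}l_{21}}{l_{12}+l_{21}}<2$ from Lemma~\ref{lem:tower221bounds2} is equivalent to $(l_{12}-2)(l_{21}-2)<4$, which is strictly weaker than the target $(l_{12}-1)(l_{21}-2)<4$, and the pairs satisfying the former but violating the latter are $(l_{12},l_{21})\in\{(5,3),(3,4),(3,5)\}$ together with the \emph{infinite} family $l_{12}=2$, $l_{21}\ge 6$ -- for $l_{12}=2$ the coarse bound says nothing. So there is no ``short finite list'' to inspect; ruling out $l_{12}=2$ for all $l_{21}\ge 6$ requires an argument uniform in $l_{21}$, which is precisely the missing step above. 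For contrast, the paper's own proof needs no lattice-point search at all: it combines the kernel relations $l_{21}w_{21}=l_{11}w_{11}+l_{12}w_{12}=w_{01}+w_{02}$ with the inequalities $w_{01},w_{02}<w_{11}+w_{12}+w_{21}$ of Lemma~\ref{lem:tower221bounds2} (these already encode that $A_{X,0}^c\cap\{y=0\}$ contains no nonzero lattice point), eliminates $w_{21}$ using $l_{21}\ge 3$, and then invokes $l_{11}\ge l_{12}$; if you want a self-contained argument, that algebraic route is the one to reproduce.
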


\begin{proof}
Estimates three and four from Lemma~\ref{lem:tower221bounds2} imply
$$ 
l_{21}w_{21} 
\ = \ 
l_{11}w_{11} + l_{12}w_{12} 
\ = \ 
w_{01} + w_{02} 
\ < \
2w_{11} + 2w_{12} + 2w_{21}.
$$
We deduce
$$
(l_{11}-2)w_{11} + (l_{12}-2)w_{12}  
\ < \ 
2w_{21},
\qquad
(l_{21}-2)w_{21}
\ < \ 
2w_{11} + 2w_{12}.
$$
Using $l_{21} \ge 3$ we obtain
$$ 
(l_{11}-2)w_{11} + (l_{12}-2)w_{12}  
\ < \ 
\frac{4}{l_{21}-2}w_{11} + \frac{4}{l_{21}-2}w_{12},
$$
which implies
$$
l_{11}w_{11} + l_{12}w_{12}  
\ < \ 
\frac{l_{21}+2}{l_{21}-2}w_{11} + \frac{l_{21}+2}{l_{21}-2}w_{12}
$$
and in particular
$$
l_{12}
\ < \ 
\frac{l_{21}+2}{l_{21}-2}.
$$
\end{proof}

\begin{remark}
\label{rem:tower221bounds2}
Let $X = X(A,P)$ be as in Setting~\ref{set:tower221bounds}.
For $c>0$ the assumption $h(g_2) > -c$ leads to 
$$ 
-c - \frac{l_{12}}{l_{21}}(c + d_{221})
\ < \
d_{212}
\ < \ 
0.
$$
\end{remark}

\begin{remark}
\label{rem:tower221bounds3}
Let $X = X(A,P)$ be as in Setting~\ref{set:tower221bounds}.
If $h(g_1) < 1$ holds, then we have 
$$
- \frac{l_{11}}{l_{21}}d_{221}
\ < \ 
d_{211}
\ < \  
- \frac{l_{11}}{l_{21}}d_{221} + 1 + \frac{l_{11}}{l_{21}}.
$$
\end{remark}

\begin{lemma}
\label{lem:tower221bounds3}
Let $X = X(A,P)$ be as in Setting~\ref{set:tower221bounds}.
Assume $l_{21}\ge3$.
If $h(g_1) < 1$ and $h(g_2) > -2$ hold, then we have 
$$ 
l_{11} \ < \ 2 \frac{l_{21}}{l_{21}-2}.
$$
This bounds $l_{11}$ in terms of $l_{21}$ in the case 
$h(g_1) < 1$ and $h(g_2) > -2$. 
In particular, we then have $l_{11} \le 5$ and we have $l_{11} \le 2$ 
as soon as $l_{21} \ge 6$.
\end{lemma}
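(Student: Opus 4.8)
The plan is to recast the claimed inequality as a statement about the trapezoid $A_{X,0}^c$ of Lemma~\ref{lem:tower221bounds1}, and then to exploit terminality not merely on this trapezoid but inside the full leaf. Clearing the positive denominator $l_{21}-2$, the assertion $l_{11} < 2l_{21}/(l_{21}-2)$ is equivalent to $l_{11}l_{21} < 2(l_{11}+l_{21})$, that is, by Remark~\ref{rem:trapez}, to
$$
\vert g_1 \vert \ = \ \frac{l_{11}l_{21}}{l_{11}+l_{21}} \ < \ 2 .
$$
So the goal is to bound the length of the longer horizontal edge $g_1 = \overline{u_1u_2}$ of $A_{X,0}^c$ by $2$. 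This is the counterpart, for the large exponent $l_{11}$, of Lemma~\ref{lem:tower221bounds7}, where the small exponent $l_{12}$ was controlled directly from $l_{11}\ge l_{12}$; the two extra hypotheses $h(g_1)<1$ and $h(g_2)>-2$ take over that role here.

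First I would translate everything into the weights of Setting~\ref{set:tower221bounds}: the hypotheses $h(g_1)<1$ and $h(g_2)>-2$ read $w_{12} < l_{11}+l_{21}$ and $w_{11} < 2(l_{12}+l_{21})$ (equivalently the bounds on $d_{211},d_{212}$ from Remarks~\ref{rem:tower221bounds3} and~\ref{rem:tower221bounds2} with $c=2$), while terminality forces, through Lemma~\ref{lem:tower221bounds2}, that the origin is the only lattice point of $A_{X,0}^c\cap\{y=0\}$. Feeding $h(g_1)<1$ into the estimate of Lemma~\ref{lem:tower221bounds2} one obtains the soft bound $\vert g_1\vert < 2 + (2(l_{12}+l_{21})-l_{12}l_{21})/w_{11}$; since $\vert g_2\vert<2$ the numerator is positive, so this does not yet give $\vert g_1\vert<2$. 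Indeed the weight inequalities are compatible with $l_{11}l_{21}=2(l_{11}+l_{21})$ (for instance $l_{21}=3$, $l_{11}=6$, $l_{12}=1$, $(w_{11},w_{12},w_{21})=(1,3,3)$), so the strict bound cannot be read off from the lineality part alone.

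The decisive point is therefore to use terminality inside the leaf $A_X^c\sqcap\tau_1$, which by Corollary~\ref{thm:cpl1antican} equals $\conv(u_1,\ldots,u_4,v_{11},v_{12})$ and is genuinely three-dimensional. I would argue by contradiction: assuming $\vert g_1\vert\ge 2$, the top edge $g_1$, lying at height $h(g_1)<1$ and having length $\ge 2$, together with the column $v_{11}$ (whose first coordinate equals $l_{11}$) spans a portion of the leaf whose integer slices by the first coordinate, between $v_{11}$ and the trapezoid, are wide enough, by $h(g_1)<1$ and $h(g_2)>-2$, to contain a non-origin lattice point of $A_X^c$. Concretely one exhibits such a point as a convex combination $\tfrac{k}{l_{11}}v_{11} + (1-\tfrac{k}{l_{11}})q$ with $q$ a suitable rational point of $A_{X,0}^c\cap\{y=0\}$, chosen so that the resulting coordinates are integral, exactly in the spirit of the lattice point $\tfrac{l_{21}-1}{l_{21}}u_1+\tfrac1{l_{21}}v_{21}$ produced in the final part of Proposition~\ref{prop:higherplt}. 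This contradicts the terminality criterion of Theorem~\ref{thm:main}(v), so $\vert g_1\vert<2$.

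The quantitative consequences are then pure arithmetic: the function $l_{21}\mapsto 2l_{21}/(l_{21}-2)$ is decreasing for $l_{21}\ge 3$, equals $6$ at $l_{21}=3$, whence $l_{11}\le 5$, and is at most $3$ once $l_{21}\ge 6$, whence $l_{11}\le 2$. The main obstacle I expect is the construction in the third paragraph: pinning down the integer $k$ and the point $q$, and verifying that the hypotheses $h(g_1)<1$, $h(g_2)>-2$ indeed guarantee integrality and membership in the leaf, is the delicate heart of the argument, whereas the reduction to $\vert g_1\vert<2$ and the weight bookkeeping are routine.
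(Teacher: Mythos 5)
Your reduction of the claim to $\vert g_1\vert = l_{11}l_{21}/(l_{11}+l_{21}) < 2$ and the closing arithmetic are correct, and your negative observation in the second paragraph is right --- in fact it cuts deeper than you indicate. The paper's own proof consists of exactly the ingredients you show to be insufficient: it sums the third and fourth inequalities of Lemma~\ref{lem:tower221bounds2} into $l_{11}w_{11}+l_{12}w_{12}<2(w_{11}+w_{12}+w_{21})$ and then ``writes this out'' using Remarks~\ref{rem:tower221bounds2} and~\ref{rem:tower221bounds3}; all of these are consequences of weight positivity, the two height hypotheses and the absence of nonzero lattice points in $A_{X,0}^c$ alone. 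Your configuration is genuinely realizable: $l_{11}=6$, $l_{12}=1$, $l_{21}=3$ with $(d_{111},d_{112},d_{121},d_{211},d_{212},d_{221})=(-4,0,0,-3,-1,2)$ gives an integral, primitive, irredundant $P$ in the normal form of Remark~\ref{rem:tower221bounds0}, weights $(w_{01},w_{02},w_{11},w_{12},w_{21})=(5,4,1,3,3)$, heights $h(g_1)=1/3$, $h(g_2)=-1/4$, and a lineality part whose only lattice point is the origin, while $\vert g_1\vert=2$. The non-terminality of this example is visible only in a leaf: for instance $(2,0,-1,-1)=\tfrac13 v_{11}+\tfrac23\bigl(0,0,\tfrac12,0\bigr)$ lies in $A_X^c\sqcap\tau_1$, violating Theorem~\ref{thm:main}(v). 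So no argument confined to the trapezoid can prove the lemma, the paper's proof as written is incomplete, and your overall strategy (use the leaf) is the correct one.

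However, your proposal also has a genuine gap, and it is exactly the step you defer to the end: the construction of the lattice point. Your candidates $\tfrac{k}{l_{11}}v_{11}+\bigl(1-\tfrac{k}{l_{11}}\bigr)q$ with $q\in A_{X,0}^c\cap\{y=0\}$ have last coordinate $\tfrac{k}{l_{11}}d_{211}$, so integrality forces $l_{11}/\gcd(l_{11},d_{211})$ to divide $k$: if $\gcd(l_{11},d_{211})=1$ there is no admissible $k$ at all, and if the gcd is $2$ the only choice $k=l_{11}/2$ leaves the first coordinate ranging over an interval of length $\tfrac12(w_{01}+w_{02})/(w_{11}+w_{12}+w_{21})<1$, which need not contain an integer. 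This failure actually occurs under your contradiction hypothesis: for $l_{11}=7$, $l_{12}=1$, $l_{21}=3$, $(d_{111},d_{112},d_{121},d_{211},d_{212},d_{221})=(-6,0,0,-3,-1,2)$ all constraints above hold, $\vert g_1\vert=21/10\ge 2$, and $\gcd(7,3)=1$ rules out every point of your form; the witness of non-terminality here is $(1,0,-1,0)=\tfrac17 v_{11}+\tfrac67\bigl(0,0,-\tfrac16,\tfrac12\bigr)$, a combination of $v_{11}$ with a point of the \emph{top edge} $g_1$ rather than of the $y=0$ slice. So the needed statement --- whenever $\vert g_1\vert\ge 2$, $h(g_1)<1$, $h(g_2)>-2$, the cone over the trapezoid and $v_{11}$ (possibly also $v_{12}$) contains a nonzero lattice point of $A_X^c\sqcap\tau_1$ --- requires combinations with points of $A_{X,0}^c$ at arbitrary heights and a uniform diophantine argument in $l_{11},l_{21}$ and the $d_{ijk}$. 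Until that is supplied, what you have is a correct diagnosis and a plausible plan, not a proof; the delicate heart you point to is still missing, both from your write-up and from the paper.
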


\begin{proof}
Observe that $w_{01}+w_{02} = l_{11}w_{11} + l_{12}w_{12}$.
Thus, the third and the fourth inequalities of Lemma~\ref{lem:tower221bounds2}
give us the condition
$$ 
l_{11}w_{11} + l_{12}w_{12} 
\ < \ 
2w_{11} + 2w_{12} + 2w_{21}.
$$
We arrive at the assertion by
writing this out and estimating $d_{212}$ as 
well as $d_{211}$ according 
to Remarks~\ref{rem:tower221bounds2} and~\ref{rem:tower221bounds3}.
\end{proof}

\begin{lemma}
\label{lem:tower221bounds8}
Let $X = X(A,P)$ be as in Setting~\ref{set:tower221bounds}.
Suppose that $h(g_1) < 1$ and $h(g_2)  \le -c$ holds for some 
$c \in \ZZ_{\ge 2}$.
Then we have $l_{12} = 1$ and moreover 
$$ 
\frac{l_{11}l_{21}}{l_{11}+l_{21}} 
\ < \ 
\frac{c+1}{c-1} - \frac{2}{c-1} \cdot \frac{l_{21}}{1+l_{21}}.
$$
\end{lemma}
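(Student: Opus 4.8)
The plan is to read off everything from the trapezoid $A_{X,0}^c$ of Remark~\ref{rem:trapez}, whose explicit vertices are given in Lemma~\ref{lem:tower221bounds1}, and to feed the terminality of $X$ into it. Since $X$ is terminal, the Proposition on the lineality part at the end of Section~\ref{sec:discr} tells us that the origin is the only lattice point of $A_{X,0}^c$ (consistent with the fact that none of the columns of $P$ lies in the lineality space $\lambda$, so no generator produces an extra lattice point here). Write $h_1 := h(g_1) > 0$ and $h_2 := h(g_2) < 0$ for the heights of the two horizontal edges (positive resp.\ negative because the weights are positive), and for $y \in [h_2, h_1]$ let $w(y)$ denote the length of the horizontal slice of $A_{X,0}^c$ at height $y$. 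As $A_{X,0}^c$ is a trapezoid with horizontal parallel edges, $w$ is affine with $w(h_2) = \vert g_2 \vert$ and $w(h_1) = \vert g_1 \vert$, and $l_{11} \ge l_{12}$ forces $\vert g_1 \vert \ge \vert g_2 \vert$ (Remark~\ref{rem:trapez}), so $w$ is nondecreasing.

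First I would show $l_{12} = 1$. Because $h_2 \le -c \le -2$ and $h_1 > 0$, the height $y = -1$ lies strictly between $h_2$ and $h_1$, so the slice at $y = -1$ is a genuine segment of $A_{X,0}^c$ of length $w(-1)$; as it misses the unique lattice point (the origin, which sits at $y = 0$), it contains no lattice point and hence $w(-1) < 1$. Monotonicity of $w$ then gives $\vert g_2 \vert = w(h_2) \le w(-1) < 1$. Since $\vert g_2 \vert = (l_{12}^{-1} + l_{21}^{-1})^{-1} \ge 1$ whenever $l_{12}, l_{21} \ge 2$, and $l_{21} \ge 2$ by assumption, this is only possible for $l_{12} = 1$, giving $\vert g_2 \vert = l_{21}/(1 + l_{21}) < 1$.

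For the asserted inequality I would exploit the slope of $w$. Writing $w(y) = \vert g_2 \vert + s\,(y - h_2)$ with $s = (\vert g_1 \vert - \vert g_2 \vert)/(h_1 - h_2) \ge 0$, the bound $w(-1) < 1$ together with $-1 - h_2 \ge c - 1 > 0$ yields $s < (1 - \vert g_2 \vert)/(-1 - h_2)$. Since $h_1 < 1$ and $s \ge 0$,
\[
\vert g_1 \vert \ = \ \vert g_2 \vert + s\,(h_1 - h_2) \ \le \ \vert g_2 \vert + s\,(1 - h_2) \ < \ \vert g_2 \vert + (1 - \vert g_2 \vert)\,\frac{1 - h_2}{-1 - h_2},
\]
the last step using $1 - h_2 > 0$ and the bound on $s$, where $1 - \vert g_2 \vert > 0$ guarantees strictness. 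Setting $t := -h_2 \ge c$, the factor $\frac{1 - h_2}{-1-h_2} = \frac{1+t}{t-1} = 1 + \frac{2}{t-1}$ is decreasing in $t$, hence at most $\frac{c+1}{c-1}$. Substituting and simplifying,
\[
\vert g_1 \vert \ < \ \vert g_2 \vert + (1 - \vert g_2 \vert)\,\frac{c+1}{c-1} \ = \ \frac{c+1}{c-1} - \frac{2}{c-1}\,\vert g_2 \vert \ = \ \frac{c+1}{c-1} - \frac{2}{c-1}\cdot\frac{l_{21}}{1 + l_{21}},
\]
which is precisely the claim, recalling $\vert g_1 \vert = l_{11}l_{21}/(l_{11}+l_{21})$.

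I expect the main obstacle to be organizational rather than deep: the one idea that does real work is to extrapolate the affine width $w$ up to the non-lattice height $y = 1$ and to control its slope through the forbidden slice at $y = -1$, after which everything reduces to the elementary monotonicity of $t \mapsto \frac{t+1}{t-1}$. The points needing care are checking that $y = -1$ is genuinely an interior height (so that $w(-1) < 1$ is actually forced by terminality via Theorem~\ref{thm:main}) and keeping track of strict versus weak inequalities, including the degenerate parallelogram case $s = 0$, which the strict bound $s < (1-\vert g_2\vert)/(-1-h_2)$ absorbs automatically.
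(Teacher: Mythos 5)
Your proof is correct and follows essentially the same route as the paper: both arguments exploit the trapezoid structure of $A_{X,0}^c$, force the slice at height $y=-1$ to have length $<1$ (since terminality leaves the origin as the only lattice point), deduce $\vert g_2\vert < 1$ and hence $l_{12}=1$, and then extrapolate the affine width function up to height $h(g_1)<1$ to obtain the stated bound. The only cosmetic difference is that the paper anchors its slope estimate between the slices at $y=-1$ and $y=-c$, whereas you anchor at $y=-1$ and $y=h(g_2)$ and then invoke the monotonicity of $t \mapsto (t+1)/(t-1)$; the resulting inequality is identical.
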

\begin{proof}
Since $h(g_2) \le -1$ holds, we must 
have $\vert g_2 \vert < 1$ and thus 
obtain $l_{12} = 1$. 
The line segment 
$A_{X,0}^c \cap \{y=-1\}$ is of length 
strictly smaller than $1$
and $A_{X,0}^c \cap \{y=-c\}$ is of length
at least $\vert g_2 \vert$.
Since  $h(g_1) < 1$ holds, we conclude 
$$ 
\frac{l_{11}l_{21}}{l_{11}+l_{21}}
\ = \ 
\vert g_1 \vert 
\ < \ 
\frac{1 - \vert g_2 \vert}{c-1}
(1 + h(g_1)) + 1
\ < \ 
\frac{c+1}{c-1} - \frac{2}{c-1} \cdot \frac{l_{21}}{1+l_{21}}.
$$
\end{proof}

\begin{remark}
\label{rem:tower221bounds4}
Let $X = X(A,P)$ be as in Setting~\ref{set:tower221bounds}.
Assume $l_{12} = 1$ and $d_{112} = d_{212} = 0$. 
Then $w_{11} > 0$ and $w_{12} > 0$ imply 
$$
0 \ < \ d_{211},
\qquad\qquad
-\frac{l_{21}}{l_{11}}d_{211} 
\ < \ 
d_{221} 
\ < \ 0.
$$
Moreover, the conditions $h(g_1) < 1$ and $h(g_2) > -c$ are equivalent 
to the following conditions
$$ 
d_{211} \ < \ - \frac{l_{11}}{l_{21}}(d_{221} - 1) + 1,
\qquad\qquad
d_{221} \ > \ -c(l_{21}+1).
$$
\end{remark}

\begin{lemma}
\label{lem:tower221bounds4}
Let $X = X(A,P)$ be as in Setting~\ref{set:tower221bounds}.
Suppose that $h(g_1) \ge 1$ holds. Then  either $l_{11} = l_{12} = 1$ 
or $l_{11} = l_{21} = 2$ hold.
\end{lemma}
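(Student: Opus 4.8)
The plan is to reduce the whole statement to the single inequality $\vert g_1\vert = \tfrac{l_{11}l_{21}}{l_{11}+l_{21}}\le 1$. Indeed, clearing denominators this reads $(l_{11}-1)(l_{21}-1)\le 1$, and since $l_{21}\ge 2$ and $l_{11}\ge l_{12}\ge 1$ the only integer solutions are $l_{11}=1$ (which forces $l_{12}=1$) and $l_{11}=l_{21}=2$, exactly the two asserted cases. So it suffices to show that $h(g_1)\ge 1$ together with terminality forces $\vert g_1\vert\le 1$.

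First I would record the combinatorial input. Since $X$ is terminal, the origin is the only lattice point of the lineality part $A_{X,0}^c$ (the terminal case of the proposition on the lineality part). By Lemma~\ref{lem:tower221bounds1} and Remark~\ref{rem:trapez}, $A_{X,0}^c$ is the convex trapezoid bounded below by the horizontal segment $g_2$ at height $h(g_2)<0$ and above by $g_1$ at height $h(g_1)>0$, with horizontal side-lengths $\vert g_2\vert=\tfrac{l_{12}l_{21}}{l_{12}+l_{21}}$ and $\vert g_1\vert=\tfrac{l_{11}l_{21}}{l_{11}+l_{21}}$. Consequently its horizontal cross-section at height $y\in[h(g_2),h(g_1)]$ is a segment whose length depends linearly on $y$, namely $\mathrm{len}(y)=\vert g_2\vert+\tfrac{y-h(g_2)}{h(g_1)-h(g_2)}\bigl(\vert g_1\vert-\vert g_2\vert\bigr)$.

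Now suppose, for contradiction, that $\vert g_1\vert>1$, and put $k:=\lfloor h(g_1)\rfloor\ge 1$. The slice at the integer height $y=k$ lies in $A_{X,0}^c$, because $h(g_2)<0<1\le k\le h(g_1)$; and a closed horizontal segment sitting at integer height and having length $\ge 1$ always meets a lattice point, which for $k\ge 1$ is different from the origin, contradicting terminality. Thus the whole problem is to prove $\mathrm{len}(k)\ge 1$. Since $h(g_1)\ge 1\ge k/k$, one checks $\tfrac{k-h(g_2)}{h(g_1)-h(g_2)}\ge \tfrac{k}{h(g_1)}$, so that $\mathrm{len}(k)\ge \vert g_2\vert+\tfrac{k}{h(g_1)}(\vert g_1\vert-\vert g_2\vert)$. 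If $\vert g_2\vert\ge 1$ this is already $\ge 1$; otherwise $\mathrm{len}(k)\ge 1$ is equivalent to $h(g_1)\le \tfrac{k(\vert g_1\vert-\vert g_2\vert)}{1-\vert g_2\vert}$. As the smallest value of $\vert g_1\vert$ exceeding $1$ (over integral $l_{11},l_{21}$ with $l_{21}\ge 2$) is $\tfrac65$, this bound is satisfied for all configurations except a handful with $l_{12}=1$ and $(l_{11},l_{21})$ small.

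The main obstacle is precisely these borderline configurations, where $\mathrm{len}(k)$ dips marginally below $1$ only on a short open interval of heights just under $k+1$. I would close them using the arithmetic of $h(g_1)$: by Lemma~\ref{lem:tower221bounds1} the height $h(g_1)=\tfrac{l_{21}d_{211}+l_{11}d_{221}}{l_{11}+l_{21}}$ has denominator $l_{11}+l_{21}$, so $h(g_1)$ cannot land strictly inside the forbidding short interval; it can only hit its boundary, where $\mathrm{len}(k)=1$ still yields a lattice point, or be an integer, in which case $g_1$ itself is a horizontal segment of length $\vert g_1\vert>1$ at integer height $h(g_1)$ and hence contains a non-origin lattice point. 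In every case a second lattice point appears, contradicting terminality and forcing $\vert g_1\vert\le 1$, which is the reduction completed in the first paragraph.
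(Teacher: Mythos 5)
Your overall strategy is the paper's: terminality forbids non-origin lattice points in the lineality part, so one produces a lattice point on a horizontal slice of the trapezoid $A_{X,0}^c$ at a positive integer height, and $\vert g_1\vert\le 1$ unravels into exactly the two asserted cases. (The paper slices only at height $1$ and is very terse; you correctly identify that the delicate situation is $h(g_1)>1$ with $l_{11}>l_{12}$, and slice at $k=\lfloor h(g_1)\rfloor$.) However, your handling of the borderline configurations contains a genuine error. When you replace $\tfrac{k-h(g_2)}{h(g_1)-h(g_2)}$ by $\tfrac{k}{h(g_1)}$ you discard all information about $h(g_2)$, and your patch for the resulting weak estimate --- that $h(g_1)$, having denominator $l_{11}+l_{21}$, ``cannot land strictly inside the forbidding short interval'' --- is simply false. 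Take $(l_{11},l_{12},l_{21})=(3,1,2)$: then $\vert g_1\vert=\tfrac65$, $\vert g_2\vert=\tfrac23$, $k=1$, your sufficient condition is $h(g_1)\le\tfrac{\vert g_1\vert-\vert g_2\vert}{1-\vert g_2\vert}=\tfrac85$, so the forbidden interval is $\bigl(\tfrac85,2\bigr)$; but $h(g_1)=\tfrac{w_{12}}{5}$ runs through all multiples of $\tfrac15$, and $h(g_1)=\tfrac95$ (i.e.\ $w_{12}=9$, realized e.g.\ by $d_{211}=3$, $d_{221}=1$, $d_{212}=-1$) lies strictly inside it. For this configuration your argument yields only $\mathrm{len}(1)\ge\tfrac23+\tfrac59\cdot\tfrac8{15}=\tfrac{26}{27}<1$ and produces no contradiction; note also that your ``equivalent'' in the second paragraph is wrong for the same reason --- the displayed condition is only sufficient for $\mathrm{len}(k)\ge1$, since it bounds a lower bound.

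What actually excludes this case is precisely the datum you threw away: $w_{11}$ is a positive integer, so $h(g_2)=-\tfrac{w_{11}}{l_{12}+l_{21}}\le-\tfrac13$, whence the exact cross-section formula gives $\mathrm{len}(1)=\tfrac23+\tfrac{1-h(g_2)}{9/5-h(g_2)}\cdot\tfrac8{15}\ge\tfrac23+\tfrac58\cdot\tfrac8{15}=1$, and a closed segment of length $1$ at height $1$ still contains a lattice point. A correct completion of your plan must therefore keep $h(g_2)$ in the estimate and use both integrality facts $w_{11}\ge1$ and $w_{12}\le(k+1)(l_{11}+l_{21})-1$: in the only nontrivial case $l_{12}=1$ (for $l_{12}\ge2$ one has $\vert g_2\vert\ge1$ and is done), substituting these into $\mathrm{len}(k)\ge1$ and clearing denominators reduces the claim to
$$
k\,(l_{21}+1)\bigl(l_{11}l_{21}-l_{11}-l_{21}\bigr)\ \ge\ 3-(l_{11}-2)(l_{21}-2),
$$
which holds whenever $(l_{11}-1)(l_{21}-1)\ge2$, i.e.\ whenever $\vert g_1\vert>1$. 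So the lemma and your first-paragraph reduction are sound, but the denominator argument must be replaced by this arithmetic; no parity or denominator consideration on $h(g_1)$ alone can close the gap.
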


\begin{proof}
First observe that in this case, 
the segment $A_{X,0}^c \cap \{y=1\}$ can be of
length at most 1, because otherwise we have 
lattice points different from the origin 
and the vertices in $A_{X}^c$. This means 
$l_{11} = 1$ or $l_{11} = l_{21} = 2$.
\end{proof}

A second series of estimates makes use of the whole 
anticanonical complex $A_X^c$. 
The strategy is to detect via $A_X^c$ suitable 
three-dimensional lattice simplices with precisely 
one interior lattice point and to use the volume 
bounds given in~\cite{AvKrNi} in order to control the 
entries of the defining matrix~$P$.
We will distinguish several cases, using the notation
of Remark~\ref{rem:trapez}.

\begin{proposition}
\label{prop:l11-l12-1}
Let $X = X(A,P)$ be as in Setting~\ref{set:tower221bounds}.
Suppose $l_{11} = l_{12} = 1$.
Then we achieve by admissible operations
$d_{112} = d_{212} = 0$ 
and obtain the estimates
$$ 
3 \ \le  \ (l_{21}+1)d_{211} \ \le \ 72,
\qquad\qquad
0 \ \le \ d_{111} \ < \ d_{211},
$$
$$
-d_{211}l_{21} \ < \ d_{221} \ < \ 0,
\qquad\qquad
\frac{d_{111}d_{221}}{d_{211}} - l_{21} \ < \ d_{121} \ < \ 0.
$$
\end{proposition}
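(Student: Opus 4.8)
The plan is to fix the normal form of $P$, then read off every inequality except the upper bound directly from positivity of the weights, and finally obtain $(l_{21}+1)d_{211}\le 72$ from a three-dimensional lattice simplex carrying exactly one interior lattice point.

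First I would normalize. Since $l_{11}=l_{12}=1$, the columns $v_{11}$ and $v_{12}$ agree in their first two entries, so the admissible operations of type~(iii) in Remark~\ref{remark:admissibleops} that add integer multiples of $p_1-p_2$ to the rows $p_3$ and $p_4$ shift $(d_{11j},d_{121})$ and $(d_{21j},d_{221})$ while preserving the shape of Setting~\ref{set:tower221bounds}; choosing the multiples $-d_{112}$ and $-d_{212}$ yields $d_{112}=d_{212}=0$. A remaining type~(iv) operation adding an integer multiple of $p_4$ to $p_3$ keeps $d_{112}=0$ (because $d_{212}=0$) and shifts $(d_{111},d_{121})$ by $(kd_{211},kd_{221})$; since $d_{211}=w_{21}>0$, this normalizes $0\le d_{111}<d_{211}$.

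With these choices the weight formulas of Setting~\ref{set:tower221bounds} specialize to $w_{11}=-d_{221}$, $w_{21}=d_{211}$, $w_{12}=l_{21}d_{211}+d_{221}$, $w_{02}=d_{111}d_{221}-d_{121}d_{211}$ and $w_{01}=l_{21}d_{211}-w_{02}$. As $-\mathcal{K}_X$ lies in the relative interior of $\Mov(A,P)$, all five weights are positive, and I would extract the inequalities from this alone: $w_{21}>0$ gives $d_{211}\ge 1$, whence $(l_{21}+1)d_{211}\ge 3$ by $l_{21}\ge 2$; $w_{11}>0$ and $w_{12}>0$ give $-d_{211}l_{21}<d_{221}<0$ (compare Remark~\ref{rem:tower221bounds4}); dividing $w_{01}>0$ by $d_{211}>0$ gives $d_{121}>\tfrac{d_{111}d_{221}}{d_{211}}-l_{21}$; and $w_{02}>0$ together with $d_{111}\ge 0$ and $d_{221}<0$ forces $d_{121}<\tfrac{d_{111}d_{221}}{d_{211}}\le 0$. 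This yields all claimed estimates save the upper bound.

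For the upper bound the key identity is $(l_{21}+1)d_{211}=w_{11}+w_{12}+w_{21}$ (equivalently, this integer is the free part of $-\mathcal{K}_X$). The plan is to produce from $A_X^c$ a three-dimensional lattice simplex with a single interior lattice point and to apply the volume bound of \cite{AvKrNi}. Concretely I would project $\ZZ^{r+s}=\ZZ^4$ along the primitive column $v_{12}$, which after the normalization is a unit vector; its image is the origin, and the images of $v_{01},v_{02},v_{11},v_{21}$ inherit the positive relation $w_{01}\bar v_{01}+w_{02}\bar v_{02}+w_{11}\bar v_{11}+w_{21}\bar v_{21}=0$. Hence their convex hull $\Delta$ is a $3$-simplex containing the origin in its interior, with normalized volume $w_{01}+w_{02}+w_{11}+w_{21}=(l_{21}+1)d_{211}-d_{221}\ge (l_{21}+1)d_{211}$. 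Terminality, via Theorem~\ref{thm:main}~(v) and the explicit vertex description in Corollary~\ref{thm:cpl1antican}, should guarantee that the origin is the only interior lattice point of $\Delta$, and the bound of \cite{AvKrNi} then caps this normalized volume, giving $(l_{21}+1)d_{211}\le 72$. The hard part will be this last step: checking that the projection along $v_{12}$ introduces no new interior lattice point in $\Delta$, so that the one-interior-point hypothesis really applies, and keeping track of the normalization that produces the explicit value $72$.
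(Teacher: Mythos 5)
Your normalization and the four elementary estimates are correct and match the paper's (very brief) treatment: the paper likewise obtains them from admissible operations and positivity of the weights $w_{ij}$. Your volume bookkeeping is also right: after projecting along $v_{12}=e_1$ the images of the remaining four columns satisfy the positive relation with coefficients $w_{01},w_{02},w_{11},w_{21}$, so $\Delta$ is a $3$-simplex with $0$ in its interior and normalized volume $w_{01}+w_{02}+w_{11}+w_{21}=(l_{21}+1)d_{211}-d_{221}$. (Had the argument worked, your simplex would even spare you the degenerate case the paper must treat separately, since for $\Delta$ the origin is automatically interior.)

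However, the step you flag as ``the hard part'' is a genuine gap, and it cannot be closed by Theorem~\ref{thm:main}(v) together with Corollary~\ref{thm:cpl1antican}: terminality only forbids lattice points lying \emph{on the anticanonical complex}, i.e.\ on $\trop(X)$, whereas $\Delta$ has interior regions disjoint from the image of $A_X^c$. Concretely, writing points as $(x_2,x_3,x_4)$ after the projection $\pi$ along $v_{12}$, the slice $\Delta\cap\{x_2=0\}$ is the triangle $\conv(u_3,u_4,\pi(v_{11}))$, which strictly contains the lineality trapezoid $\conv(u_1,u_2,u_3,u_4)$ of Lemma~\ref{lem:tower221bounds1}; the surplus triangle $\conv(u_1,u_2,\pi(v_{11}))$ sits above the trapezoid. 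A lattice point $q$ in that surplus, off the segment $[0,\pi(v_{11})]$, lifts to points $(t,0,q)$ which lie on $\trop(X)$ only for $t\ge 0$ and lie in the $\tau_1$-leaf only for $t$ in a subinterval of $(0,1)$ containing no integer; hence \emph{no} lift of $q$ belongs to $A_X^c$, and terminality says nothing about $q$. The same surplus occurs in every slice $x_2=k$ with $0\le k<l_{21}$, i.e.\ exactly at the integral levels where interior lattice points of $\Delta$ can live. So the hypothesis of \cite[Thm.~2.2]{AvKrNi} --- exactly one interior lattice point --- is unverified for $\Delta$, and with it the bound $(l_{21}+1)d_{211}\le 72$. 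This is precisely what the paper's construction is built to avoid: it encloses the leaf $C'=\conv(A_{X,0}^c,v_{21})$ in a pyramid $C$ over a parallelogram placed at the level $x_2=-1$, so that $C\cap\{x_2\ge0\}=C'$; since interior points of $C$ satisfy $x_2>-1$, every interior \emph{lattice} point of $C$ is forced into $C'\subseteq A_X^c$, where Theorem~\ref{thm:main}(v) applies. Your surplus instead occupies the levels $x_2=0,\dots,l_{21}-1$, and excluding lattice points there needs a separate argument (for instance, a surplus lattice point at level $x_2=0$ already forces $d_{211}-d_{221}\ge l_{21}+1$, and such configurations would have to be ruled out case by case) which the tools you cite do not supply. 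Unless you replace $\Delta$ by a polytope whose excess over the anticanonical complex avoids integral levels --- which is what the paper does --- the key upper bound does not follow.
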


\begin{proof}
Consider the convex hull $C'$ of $A_{X,0}^c$ and 
$v_{21}$. We may regard $C'$ as a polytope in $\QQ^3$ 
by omitting the first coordinate. Then $C'$ is contained in the 
polytope $C$ with the vertices
$$ 
(l_{21},d_{121},d_{221}),
\quad
(-1,d_{111},d_{211}),
\quad
(-1,1+d_{111},d_{211}),
\quad
(-1,0,0),
\quad
(-1,1,0).
$$
Now, $C$ is a lattice polytope having $(0,0,0)$ as the 
only interior lattice point. 
There are precisely two ways to write $C$ as a union of 
two simplices,
$$
C \ = \ C_1\cup C_2 \ = \ C_3 \cup C_4.
$$
For each of these simplices, the volume is 
$\mathrm{vol}(C_j) = (l_{21}+1)d_{211}/6$.
If the origin lies in the interior of one of the $C_j$,
then, according to~\cite[Thm.~2.2]{AvKrNi}, its volume 
is at most~$12$. 
This gives the bound
$$
(l_{21}+1)d_{211}
\ = \ 
6  \cdot \mathrm{vol} (C_j)
\ \leq \ 
72.
$$ 
The remaining estimates follow
from positivity of the weights $w_{ij}$.
If the origin lies in $C_1 \cap C_2 \cap C_3 \cap C_4$,
then we must have
$$
l_{21} \ = \ 2, 
\qquad
d_{211} \ = \ -d_{221},
\qquad
d_{111} \ = \ -1-d_{121}.
$$
Positivity of the weights provides the inequalities 
$d_{121},d_{221}<0$.
Since the origin is the only lattice point in $A_{X,0}^c$,
we get $d_{121}>-5$ and $d_{221}>3(d_{121}+1)$,
which altogether fulfill the estimates of this proposition.
\end{proof}

\begin{proposition}
\label{prop:l21-2}
Let $X = X(A,P)$ be as in Setting~\ref{set:tower221bounds}.
Suppose $l_{21} = 2$. 
\begin{enumerate}
\item
If $h(g_1)<1$ and $h(g_2)>-1$ hold, turn
$P$ by means of admissible operations into 
the shape of Remark~\ref{rem:tower221bounds0}.
Then we are in one of the  situations:
\begin{enumerate}
\item
$d_{121}=1$, $d_{221}=0$, $(2+l_{12})d_{211}+(2+l_{11})(-d_{212}) \le 36$,
\item
$d_{121}=0$, $d_{221}=1$, $(l_{11}-l_{12})+(2+l_{12})d_{211}+(2+l_{11})(-d_{212}) \le 36$.
\end{enumerate}
In both situations the remaining entries $d_{111}$, $d_{112}$ 
are bounded according to Remark~\ref{rem:tower221bounds0}.
\item
If $h(g_1)<1$ and $h(g_2)\le-1$ hold, then we have $l_{12}=1$.
Moreover adjusting $d_{112}=d_{212}=0$ by admissible operations,
we arrive in one of the following three situations:
\begin{enumerate}
\item
$l_{11}=1$ holds and Proposition~\ref{prop:l11-l12-1} applies.
\item
$l_{11}=2$ holds and we have estimates
$$
-6 \leq d_{221} \leq -3, \qquad\qquad d_{211}=1-d_{221}.
$$
\item
$3\le l_{11}< 140$ holds and we have estimates
$$
\qquad\qquad\qquad 
\frac{-5l_{11}+2}{l_{11}-2} < d_{221} \le -3, 
\qquad
-\frac{l_{11}}{2}d_{221} < d_{211} < -\frac{l_{11}}{2}d_{221}+\frac{l_{11}}{2}.
$$
\end{enumerate}
In both cases (b) and (c), the remaining entries of the defining 
matrix $P$ are bounded by
$$
\qquad\quad 
0 \le d_{121} < -d_{221}, 
\qquad\quad 
\frac{d_{121}d_{211}}{d_{221}}+2\frac{d_{211}}{d_{221}} 
< d_{111} < 
\frac{d_{121}d_{211}}{d_{221}}.
$$
\item
If $h(g_1)\ge1$ holds, then we have $l_{11}=l_{12}=1$ and 
Proposition~\ref{prop:l11-l12-1} applies.
\end{enumerate}
\end{proposition}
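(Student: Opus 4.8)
The plan is to keep $l_{21}=2$ fixed throughout and to argue along the three regimes for the heights $h(g_1),h(g_2)$ of the trapezoid $A_{X,0}^c$ recorded in Remark~\ref{rem:trapez}. Since $A_{X,0}^c$ is the lineality part of $A_X^c$, terminality via Theorem~\ref{thm:main}~(v) forces its only lattice point to be the origin, and this is the constraint I would exploit again and again. The three cases (i)--(iii) do partition the parameter range, because (i) and (ii) together cover $h(g_1)<1$ while (iii) covers $h(g_1)\ge 1$.

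For case~(iii) I would invoke Lemma~\ref{lem:tower221bounds4} directly: $h(g_1)\ge 1$ leaves only $l_{11}=l_{12}=1$ or $l_{11}=l_{21}=2$. The first alternative is precisely the hypothesis of Proposition~\ref{prop:l11-l12-1}, so nothing remains there. The work is to exclude $l_{11}=l_{21}=2$. Here $\vert g_1\vert=1$; if moreover $l_{12}=2$, then $A_{X,0}^c$ is a parallelogram of constant width $1$ spanning heights from $h(g_2)<0$ up to $h(g_1)\ge 1$, so its section at $y=1$ is a closed unit segment and hence contains a nonzero lattice point, contradicting terminality of the lineality part. For the residual triple $(l_{11},l_{12},l_{21})=(2,1,2)$ I would instead locate a lattice point on an edge of $A_X^c$ joining a vertex of the trapezoid to the last column $v_{21}$, exactly as in the closing step of the proof of Proposition~\ref{prop:higherplt}.

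For case~(ii) the starting point is that $h(g_2)\le -1$ makes the lower edge $g_2$ shorter than $1$, which forces $l_{12}=1$ (the length computation carried out in the proof of Lemma~\ref{lem:tower221bounds8}); I then normalise $d_{112}=d_{212}=0$ by admissible row operations and split on $l_{11}$. The value $l_{11}=1$ falls back to Proposition~\ref{prop:l11-l12-1}. For $l_{11}\ge 2$ I would, following the second strategy announced before Proposition~\ref{prop:l11-l12-1}, read off from $A_X^c$ together with $v_{21}$ a three-dimensional lattice simplex having the origin as its unique interior lattice point, and apply the volume bound of \cite{AvKrNi}; this caps the relevant products of entries and yields the stated ranges, namely $d_{211}=1-d_{221}$ with $-6\le d_{221}\le -3$ when $l_{11}=2$, and $3\le l_{11}<140$ together with the displayed bounds on $d_{221},d_{211}$ when $l_{11}\ge 3$. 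The remaining entries $d_{111},d_{121}$ are then squeezed into the displayed intervals purely by positivity of the weights $w_{ij}$ from Setting~\ref{set:tower221bounds}.

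Case~(i) is handled in the same spirit: after the normalisation of Remark~\ref{rem:tower221bounds0} the bounds $0\le d_{121},d_{221}<l_{21}=2$ leave only the configurations $(d_{121},d_{221})\in\{(1,0),(0,1)\}$ once terminality of $A_{X,0}^c$ excludes $(0,0)$, and a volume bound of \cite{AvKrNi} on a simplex built from $A_X^c$ and $v_{21}$ (together with Remarks~\ref{rem:tower221bounds2} and~\ref{rem:tower221bounds3} to control $d_{211},d_{212}$) gives the inequality $(2+l_{12})d_{211}+(2+l_{11})(-d_{212})\le 36$, with the correction term $l_{11}-l_{12}$ in subcase~(b). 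I expect the main obstacle to be the simplex constructions in cases~(i) and~(ii)(c): one must exhibit the correct three-dimensional lattice simplex inside $A_X^c$ whose single interior lattice point is the origin, so that \cite{AvKrNi} is both applicable and sharp enough, and---crucially---since $l_{21}=2$ the earlier bound on $l_{11}$ coming from Lemma~\ref{lem:tower221bounds3} is unavailable, so $l_{11}$ must be bounded directly through the volume estimate rather than by a height argument.
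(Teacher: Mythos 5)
Your skeleton matches the paper's: the same three-way split on $h(g_1),h(g_2)$, the same appeals to Lemmas~\ref{lem:tower221bounds8} and~\ref{lem:tower221bounds4} and to Proposition~\ref{prop:l11-l12-1}, and volume bounds from \cite{AvKrNi} on simplices built from the anticanonical complex. But at the three places where the real content of the proposition sits, your proposal either attributes a conclusion to a tool that cannot produce it or leaves the decisive step unresolved. First, in case~(ii)(b) you claim the volume bound yields $d_{211}=1-d_{221}$ together with $-6\le d_{221}\le -3$; a volume inequality can never force an exact linear \emph{equality}. The paper's argument is structural: for $l_{11}=l_{21}=2$ the vertices $u_1,u_2$ of the trapezoid are half-integral, so $h(g_1)$ is a half-integer in $(0,1)$, hence $h(g_1)=1/2$ exactly, which is the identity $d_{211}+d_{221}=1$; the range of $d_{221}$ then comes from $h(g_2)\le -1\Leftrightarrow d_{221}\le -3$ and from lattice-point-freeness of $A_{X,0}^c$ forcing $h(g_2)\ge -2$, i.e.\ $d_{221}\ge -6$. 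Second, in case~(ii)(c) you correctly flag the obstacle but do not overcome it: the natural simplex $D$ spanned by the leaf data and $v_{21}$ does \emph{not} in general have the origin as its unique interior lattice point --- extra interior lattice points occur in the slab $\{x=-1\}$. The paper's essential move is to enumerate the finitely many admissible pairs $(d_{121},d_{221})$, discard those already producing a lattice point in the lineality part, locate the extra interior point $p$ in the surviving cases, and split $D$ into sub-simplices $D_1,D_2$ having $p$ as a vertex, to each of which \cite[Thm.~2.2]{AvKrNi} then applies (also ruling out $0\in D_1\cap D_2$ by a primitivity argument); only this yields $l_{11}<140$. Without that splitting device, the bound is simply not obtained.

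Third, in case~(iii) your plan for excluding $(l_{11},l_{12},l_{21})=(2,1,2)$ --- finding a lattice point on an edge of $A_X^c$ joining a trapezoid vertex to $v_{21}$, as in Proposition~\ref{prop:higherplt} --- would fail: in that earlier proof the lattice point on the edge from $u_1$ to $v_{21}$ exists precisely because $l_{21}$ is \emph{odd}, whereas here $l_{21}=2$. The paper instead stays inside the lineality part: after normalizing $d_{111}$ and $d_{121}$ to equal parity, it exhibits an integral point of $A_{X,0}^c$ at height $h(g_1)-1/2$ (using that $h(g_1)$ is half-integral but not integral), contradicting terminality. A smaller slip: in case~(i) the pair $(d_{121},d_{221})=(0,0)$ is excluded by primitivity of the column $v_{21}$, not by terminality of the lineality part.
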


\begin{proof}
We prove~(i).
First observe that Remark~\ref{rem:tower221bounds0} 
yields $d_{221}\in\{0,1\}$ because of $l_{21} = 2$.
If $d_{221}=1$ holds, then Remark~\ref{rem:tower221bounds0} 
implies $d_{121}=0$.
If $d_{221}=0$ holds, then we must have $d_{121}=1$ 
because $v_{21}$ is a primitive lattice point.
This leads to cases (a) and (b) as the only 
possibilities.
For the estimate of case (a), we look at the 
lattice simplex 
$C_1$ given in $\QQ^3$ as the convex hull of 
the following points:
$$
(l_{11},d_{111},d_{211}), 
\quad (l_{12},d_{112},d_{212}), 
\quad (-2,1,0), 
\quad (-2,3,0).
$$
To obtain the estimate of case (b), we look at the 
lattice simplex $C_2$  in $\QQ^3$ given as the convex 
hull of the following points:
$$
(l_{11},d_{111},d_{211}), 
\quad (l_{12},d_{112},d_{212}), 
\quad (-2,0,1), 
\quad (-2,2,1).
$$
For the volumes, we obtain in both cases  
$\text{vol}(C_i) = (w_{11}+w_{12}+w_{21})/3$.
Now, put the leaf $A_X^c \cap \tau_1$ of the 
anticanonical complex into $\QQ^3$ by removing the 
second coordinate (which always equals zero) 
from its points.
For $a = 0,-1,-2$, consider 
$$
H_{a}^+ \ := \{(x,y,z); \; x \ge a\} \ \subseteq \ \QQ^3,
\qquad
H_{a}^0 \ := \{(x,y,z); \; x = a\} \ \subseteq \ \QQ^3.
$$
Then $C_i \cap H_{0}^+$ equals $A_X^c \cap \tau_1$
and $H_0^0$ cuts out the lineality part $A_{X,0}^c$.
In particular, by terminality of $X$ and Theorem~\ref{thm:main},
the intersection $C_i \cap H_{0}^+$ has no interior lattice point
and inside $C_i \cap H_{0}^0$ the origin is the only 
lattice point.
The intersection $C_1 \cap H_{-1}^0$ has the vertices
$$
\Bigl(-1,\frac{l_{11}+d_{111}+1}{l_{11}+2},\frac{d_{211}}{l_{11}+2}\Bigr),
\qquad
\Bigl(-1,\frac{3l_{11}+d_{111}+3}{l_{11}+2},\frac{d_{211}}{l_{11}+2}\Bigr),
$$
$$
\Bigl(-1,\frac{l_{12}+d_{112}+1}{l_{12}+2},\frac{d_{212}}{l_{12}+2}\Bigr),
\qquad
\Bigl(-1,\frac{3l_{12}+d_{112}+3}{l_{12}+2},\frac{d_{212}}{l_{12}+2}\Bigr),
$$
while the intersection $C_2 \cap H_{-1}^0$ has the vertices
$$
\Bigl(-1,\frac{d_{111}}{l_{11}+2},\frac{l_{11}+d_{211}+1}{l_{11}+2}\Bigr),
\qquad
\Bigl(-1,\frac{2l_{11}+d_{111}+2}{l_{11}+2},\frac{l_{11}+d_{211}+1}{l_{11}+2}\Bigr),
$$
$$
\Bigl(-1,\frac{d_{112}}{l_{12}+2},\frac{l_{12}+d_{212}+1}{l_{12}+2}\Bigr),
\qquad
\Bigl(-1,\frac{2l_{12}+d_{112}+2}{l_{12}+2},\frac{l_{12}+d_{212}+1}{l_{12}+2}\Bigr),
$$
The inequalities $h(g_1)<1$ and $h(g_2)>-1$
together with the positivity of the weights ensure that 
the points of $C_i \cap H_{-1}^0$ never have an integral 
$z$-value.
We can conclude that $C_i \subseteq H_{-2}^+$ 
has the origin as its only interior lattice point.
Applying the bound $\text{vol}(C_i)\leq12$ from~\cite[Thm.~2.2]{AvKrNi} 
and writing down the involved weights explicitly
we arrive at the assertion.

We turn to~(ii). By Lemma~\ref{lem:tower221bounds8} we have $l_{12}=1$.
By admissible operations, we achieve $d_{112}=d_{212}=0$.
If $l_{11}=1$ holds,
we can apply Proposition~\ref{prop:l11-l12-1}. 
Let $l_{11}\ge2$.
For $l_{11}\ge 3$, the positivity of the weights and
the constraints on the heights together with 
suitable admissible operations lead to 
all the bounds for the $d_{ijk}$ stated in~(c)
except for the lower bound on $d_{221}$.
For that, observe that the segment $A_{X,0}^c\cap\{y=-1\}$
has to be of length strictly smaller than $1$ 
and conclude
$$
\frac{-5l_{11}+2}{l_{11}-2} \ < \ d_{221}.
$$
The next step is to bound $l_{11}$.
For this, we consider the simplex $D \subseteq \QQ^3$ 
given as the convex hull of following points
$$
(l_{11},d_{111},d_{211}), \quad (1,0,0), \quad (-2,d_{121},d_{221}), \quad (-2,d_{121}+2,d_{221}).
$$
Now, put the leaf $A_X^c \cap \tau_1$ of the anticanonical complex into $\QQ^3$
by removing the second coordinate (which always equals zero).
With the same notation as in part (i) of the proof,
we see that $D \cap H_{0}^+$ equals $A_X^c \cap \tau_1$
and $H_0^0$ cuts out the lineality part $A_{X,0}^c$.
For $l_{11}\ge 10$ the only possible values for $d_{221}$ are $-3,-4,-5$.
Moreover we already have $0\leq d_{121} < -d_{221}$.
Thus the allowed pairs $(d_{121},d_{221})$ are
$$
(0,-3),\quad (1,-3),\quad (2,-3),\quad (1,-4),\quad (3,-4),
$$
$$
(0,-5),\quad (1,-5),\quad (2,-5),\quad (3,-5),\quad (4,-5).
$$
Actually all of them, except the fourth, the seventh and the eigth,
already provide an inner lattice point in the lineality part.
Going through the remaining three pairs we are able to determine
the inner lattice points of $D$ other than the origin.
These points can now only lie in $H_{-1}^0$.
By finding a simplex with exactly one inner lattice point 
and using~\cite[Thm.~2.2]{AvKrNi} we obtain $l_{21} \le 140$.
Here we treat the pair $(1,-4)$ as an example, since it provides the worst estimate.
In this case $D$ has vertices
$$
v_{11}=(l_{11},d_{111},d_{211}), \quad v_{12}=(1,0,0), \quad a_1:=(-2,1,-4), \quad a_2:=(-2,3,-4),
$$
and we get $p:=(-1,1,-2)$ as only inner lattice point other than the origin.
We define simplices $D_1 := \conv(p,v_{11},v_{12},a_1)$ and $D_2 := \conv(p,v_{11},v_{12},a_2)$.
The origin lies in one of the two simplices $D_i$.
Bounding their volumes by $12$ according to~\cite[Thm.~2.2]{AvKrNi}
we obtain $l_{11}\leq 70$ if $0\in D_1^\circ$ and $l_{11}\leq 140$ if $0\in D_2^\circ$.
Note that the origin cannot lie in $D_1 \cap D_2 = \conv(p,v_{11},v_{12})$:
we would have $d_{211}=-2d_{111}$, but then
terminality would require $\gcd(d_{111},d_{211})=1$,
which in turn fixes $d_{111}=-1$ and $d_{211}=2$.
Now with the second estimate of (c) $l_{11}<1$ must hold,
a clear contradiction to $l_{11}\ge2$.

Now we turn to the case $l_{11}=2$ and prove the estimates of (b).
Here $u_1$ and $u_2$ are half-integral points,
therefore we have $h(g_1)=1/2$, which implies $d_{211}+d_{221}=1$.
The constraint $h(g_2)\le -1$ is equivalent to $d_{221}\le -3$.
Estimates on $d_{111}$ and $d_{121}$ are found 
by positivity of the weights and admissible operations.
For the lower bound on $d_{221}$ we note that 
$u_3$ lies under the bisection of the fourth quadrant.
Requiring that no lattice point lies in $A_{X,0}^c$ except for the origin
only leaves a confined area to place $g_2$, namely 
$h(g_2)\ge -2$ must hold.
This provides the bound $d_{221}\ge -6$.

Let us verify~(iii). By Lemma~\ref{lem:tower221bounds4}
we have $(l_{11},l_{12}) \in \{ (1,1) , (2,1) , (2,2) \}$.
If both exponents are equal $1$, then Proposition~\ref{prop:l11-l12-1} applies straightforward.
If both exponents are equal $2$, then $\vert g_1 \vert = \vert g_2 \vert = 1$.
This implies that the segment $A_{X,0}^c \cap \{y=1\}$
is of length one and hence contains at least one lattice point.
Lastly we show that the case $(l_{11},l_{12})=(2,1)$ is also not possible.
Here it holds $\vert g_1 \vert = 1$ and
two of the vertices are
$$
u_1 = \Bigl( \frac{1}{2}d_{111}+\frac{1}{2}d_{121} \,,\, \frac{1}{2}d_{211}+\frac{1}{2}d_{221} \Bigr),
\qquad\quad u_2 = u_1+(1,0).
$$
We assume $h(g_1)$ to be non-integral,
otherwise we would have a lattice point on $g_1$ itself.
Nonetheless an integral point $p$ is always in the lineality part, 
precisely at the height $h(g_1)-1/2$
and it can be given explicitly as $p:=\alpha u_1 +\beta u_2$ where
$$
\alpha := -k-\frac{d_{111}+d_{121}+2}{2(d_{211}+d_{221})}, \qquad\quad
\beta := 1+k+\frac{d_{111}+d_{121}}{2(d_{211}+d_{221})}
$$
for an appropriate $k\in\ZZ_{\ge0}$ that makes $0\leq \alpha,\beta <1$.
Then we have
$$
p = \Bigl( \frac{1}{2}d_{111}+\frac{1}{2}d_{121}+k+1 \,,\, h(g_1)-\frac{1}{2} \Bigr),
$$
which is an integral point since we can always assume
$d_{111}$ and $d_{121}$ to have the same parity.
\end{proof}

\begin{proposition}
\label{prop:l21-ge3}
Let $X = X(A,P)$ be as in Setting~\ref{set:tower221bounds}.
Suppose $l_{21} \ge 3$.
\begin{enumerate}
\item
If $h(g_1)<1$ and $h(g_2)>-2$ hold, then we are in one of 
the following three situations:
\begin{enumerate}
\item
We have $3 \le l_{21} \le 5$ and the other $l_{ij}$ are bounded 
according to the table
\begin{center}
\begin{tabular}{r|r|r|r}
$l_{21}$ & $3$ & $4$ & $5$
\\
\hline
$l_{12}$ & $\le 4$ & $\le 2$ & $\le 2$
\\
\hline
$l_{11}$ & $\le 5$ & $\le 3$ & $\le 2$
\end{tabular}
\end{center}
In this case turn $P$ by means of admissible operations
into the shape of Remark~\ref{rem:tower221bounds0}.
Then we have $0 \le d_{121},d_{221} < l_{21}$ and the estimates
$$
\qquad\qquad\qquad
-2-\frac{l_{12}}{l_{21}}(d_{221}+2)< d_{212} < 0,
\quad
-\frac{l_{11}}{l_{21}}d_{221} < d_{211} < -\frac{l_{11}}{l_{21}}d_{221} +1 + \frac{l_{11}}{l_{21}}
$$
and the remaining two entries $d_{111}$, $d_{112}$ 
are bounded according to Remark~\ref{rem:tower221bounds0}.
\item
We have $6 \leq l_{21}$ and $l_{11}=l_{12}=1$.
Then all entries $d_{ijk}$ can be bounded 
according to Proposition~\ref{prop:l11-l12-1}.
\item
We have $6 \leq l_{21}$, $l_{11}=2$ and $l_{12}=1$.
Then we achieve $d_{112}=d_{212}=0$ by suitable admissible
operations and values and bounds for the remaining entries 
are given by the table
\begin{center}
\begin{tabular}{r|r|r|r|r|r|r|r}
$d_{111}$ & $0$ & $1$ & $1$ & $1$ & $1$ & $2$ & $2$
\\
$d_{211}$ & $1$ & $2$ & $3$ & $4$ & $5$ & $3$ & $5$
\\
\hline
$l_{21}$ & $\le 141$ & $\le 71$ & $\le 71$ & $\le 179$ & $\le 177$ & $\le 137$ & $\le 143$
\end{tabular}
\end{center}
and by the estimates
$$
\qquad\qquad  -2(l_{21}+1) < d_{221} < 0,  \qquad 
\frac{d_{111}d_{221}}{d_{211}} -l_{21} < d_{121} < \frac{d_{111}d_{221}}{d_{211}}.
$$
\end{enumerate}
\item
If $h(g_1)<1$ and $h(g_2)\le-2$ hold, then we are in one of 
the following two situations:
\begin{enumerate}
\item
We have $l_{11}=l_{12}=1$. Then $l_{21}$ and the entries $d_{ijk}$ 
can be bounded according to Proposition~\ref{prop:l11-l12-1}.
\item
We have $l_{11}=2$, $l_{12}=1$ and $l_{21}=3,4$.
Then we achieve $d_{112}=d_{212}=0$ by admissible 
operations and obtain the following estimates
$$
\qquad\qquad  -4(l_{21}+1) < d_{221} < 0,
\qquad -\frac{2}{l_{21}}d_{221} < d_{211} < -\frac{2}{l_{21}}(d_{221}-1)+1,
$$
$$
\qquad\qquad 0 \le d_{121} < -d_{221}, \qquad
\frac{d_{211}(d_{121}+l_{21})}{d_{221}} < d_{111} < \frac{d_{211}d_{121}}{d_{221}}.
$$
\end{enumerate}
\item
If $h(g_1) \ge 1$ holds, then we have $l_{11}=l_{12}=1$ and Proposition~\ref{prop:l11-l12-1} applies.
\end{enumerate}
\end{proposition}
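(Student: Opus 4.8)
The plan is to treat the three height regimes (iii), (i), (ii) separately, in each case first pinning down the exponents $l_{ij}$ and then bounding the entries $d_{ijk}$, exactly as in the proof of Proposition~\ref{prop:l21-2}. Assertion~(iii) is immediate: Lemma~\ref{lem:tower221bounds4} forces $l_{11}=l_{12}=1$ or $l_{11}=l_{21}=2$, and since we assume $l_{21}\ge3$ the second alternative is excluded, so $l_{11}=l_{12}=1$ and Proposition~\ref{prop:l11-l12-1} applies.

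For~(i), I would combine the two a priori bounds available under the hypotheses $h(g_1)<1$, $h(g_2)>-2$: Lemma~\ref{lem:tower221bounds3} gives $l_{11}<2l_{21}/(l_{21}-2)$, so $l_{11}\le5$ in general and $l_{11}\le2$ as soon as $l_{21}\ge6$; Lemma~\ref{lem:tower221bounds7} gives $l_{12}<(l_{21}+2)/(l_{21}-2)$. Reading these off for $l_{21}=3,4,5$ produces the table of case~(a), after which the matrix is normalized via Remark~\ref{rem:tower221bounds0} and the entries $d_{211},d_{212},d_{111},d_{112},d_{121},d_{221}$ are confined by Remarks~\ref{rem:tower221bounds2}, \ref{rem:tower221bounds3} and~\ref{rem:tower221bounds0}. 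For $l_{21}\ge6$ we have $l_{11}\le2$; the subcase $l_{11}=1$ forces $l_{12}=1$ and lands in case~(b) via Proposition~\ref{prop:l11-l12-1}, while $l_{11}=2$ forces $l_{12}=1$ (again from Lemma~\ref{lem:tower221bounds7}) and is case~(c).

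For~(ii), the starting point is Lemma~\ref{lem:tower221bounds8} with $c=2$, which yields $l_{12}=1$ together with the inequality $l_{11}l_{21}/(l_{11}+l_{21})<3-2l_{21}/(1+l_{21})$. A short monotonicity check shows this fails for $l_{11}\ge3$, and for $l_{11}=2$ it forces $l_{21}\le4$; thus either $l_{11}=1$ (case~(a), handled by Proposition~\ref{prop:l11-l12-1}) or $l_{11}=2$, $l_{12}=1$, $l_{21}\in\{3,4\}$ (case~(b)). In the latter case I would normalize $d_{112}=d_{212}=0$ and read the displayed bounds on $d_{221},d_{211},d_{121},d_{111}$ off positivity of the weights $w_{ij}$ and the height constraints, exactly as in Proposition~\ref{prop:l21-2}(ii).

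The genuine work, and the main obstacle, is bounding $l_{21}$ in case~(i)(c), where the exponents alone do not suffice. Here I would mimic the volume argument of Proposition~\ref{prop:l21-2}: for each admissible pair $(d_{111},d_{211})$ place the leaf $A_X^c\sqcap\tau_1$ into $\QQ^3$ by dropping the identically zero second coordinate, build an explicit lattice simplex whose restriction to the halfspace $x\ge0$ recovers $A_X^c\sqcap\tau_1$ and whose slice $x=0$ is the lineality part, and verify, using terminality (Theorem~\ref{thm:main}) together with $h(g_1)<1$ and $h(g_2)>-2$, that the intermediate slice $x=-1$ carries no lattice point. This certifies that the origin is the only interior lattice point, so \cite[Thm.~2.2]{AvKrNi} caps the volume by $12$ and hence bounds $l_{21}$, yielding the table of~(i)(c). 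The delicate point, as in the earlier proposition, is the configuration where the origin falls on the common facet of the two natural simplices rather than in the interior of one; there one must split into finer simplices through an extra interior lattice point and bound each volume separately, ruling out the degenerate overlap by a primitivity (gcd) argument forced by terminality.
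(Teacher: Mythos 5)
Your case structure and the reductions for (ii), (iii) and the exponent table in (i) follow the paper's proof essentially verbatim (Lemmas~\ref{lem:tower221bounds3}, \ref{lem:tower221bounds7}, \ref{lem:tower221bounds8}, \ref{lem:tower221bounds4} plus Remarks~\ref{rem:tower221bounds0}--\ref{rem:tower221bounds3}), and those parts are fine. The genuine gap is in the volume argument for case~(i)(c), and it comes from your choice of leaf. You propose to work in the leaf $A_X^c\sqcap\tau_1$ and to build a lattice simplex whose part in $\{x\ge 0\}$ is that leaf. But in case~(i)(c) the roles of the exponents are reversed compared with Proposition~\ref{prop:l21-2}: here $l_{11}=2$, $l_{12}=1$ are small and $l_{21}\ge 6$ is the quantity to be bounded. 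Any lattice polytope whose $\{x\ge0\}$-part recovers the $\tau_1$-leaf must have its negative vertices on the extensions of the edges $v_{11}u_1$, $v_{12}u_3$, etc., and these extensions only reach lattice points at $x=-l_{21}$ (the unfolded images of $v_{21}$). Hence your polytope stretches from $x=2$ down to $x=-l_{21}$, and interior lattice points can occur in every slice $x=-1,-2,\dots,-l_{21}+1$; terminality via Theorem~\ref{thm:main} controls only the part $\{x\ge0\}$, so checking the single slice $x=-1$ certifies nothing, and \cite[Thm.~2.2]{AvKrNi} cannot be invoked. The paper avoids this by working in the leaf $\tau_2$ (drop the \emph{first} coordinate): there the long direction $l_{21}$ is the positive side, which terminality does control, and the negative side stops at $x=-l_{11}=-2$, with vertices $(l_{21},d_{121},d_{221})$, $(-2,d_{111},d_{211})$, $(-2,d_{111}+2,d_{211})$, $(-1,0,0)$, $(-1,1,0)$, so that only the slice $x=-1$ needs attention.

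Two further points. First, even in the correct $\tau_2$-picture your central claim is too strong: under the hypothesis $h(g_2)>-2$ (as opposed to $>-1$ in Proposition~\ref{prop:l21-2}(i)) the slice $x=-1$ \emph{can} contain lattice points, and the paper's method is not to show it is empty but, for each admissible pair $(d_{111},d_{211})$, to locate such points and find a sub-simplex $B\subseteq C$ having exactly one interior lattice point, whose volume then bounds $l_{21}$; this is precisely why the table in (i)(c) shows non-uniform bounds ($141,71,179,\dots$). You do mention the subdivision through an extra interior lattice point as a ``delicate point'', but in this case it is the generic mechanism, not an exceptional one. Second, your plan never derives the finite list of pairs $(d_{111},d_{211})$ over which the volume argument runs: one must substitute $d_{221}>-2(l_{21}+1)$ into the $d_{211}$-estimate to get $0<d_{211}<5+6/l_{21}\le 6$, then use $0\le d_{111}<d_{211}$, discharge $d_{111}=3,4$ (origin outside the lineality part) and the non-primitive columns such as $(2,0,2,4)$; similarly, in (ii)(b) the bound $d_{221}>-4(l_{21}+1)$ presupposes $h(g_2)>-4$, which is not a hypothesis and has to be extracted from Lemma~\ref{lem:tower221bounds8} with $c=4$ before Remark~\ref{rem:tower221bounds4} can be applied.
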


\begin{proof}
Let us verify~(i). 
Lemmas~\ref{lem:tower221bounds7} and~\ref{lem:tower221bounds3}
provide us bounds on $l_{11}$ and $l_{12}$ in terms of $l_{21}$, namely
those from the table of case~(a) if $l_{21}<6$, 
otherwise $l_{11}=1,2$ and $l_{12}=1$.
The other estimates of case~(a) follow directly from
Remarks~\ref{rem:tower221bounds0}, \ref{rem:tower221bounds2} 
and~\ref{rem:tower221bounds3}.
From now on we have $l_{21}\ge 6$ and $l_{12}=1$,
thus we assume $d_{112}=d_{212}=0$ by admissible operations.
If $l_{11}=1$ then we are in case~(b) and 
Proposition~\ref{prop:l11-l12-1} applies.
If $l_{11}=2$ then we have to prove the estimates of case~(c).
Writing down explicitly the inequalities $h(g_1)<1$ and $h(g_2)>-2$ as well as
the positivity of the weights already gives us
the bounds for $d_{121}$ and $d_{221}$ and the following estimates
$$
-\frac{2d_{221}}{l_{21}} < d_{211} < -\frac{2d_{221}}{l_{21}}+\frac{l_{21}+2}{l_{21}},
\qquad 0 \le d_{111} < d_{211}.
$$
All we are left to find is an upper bound for $l_{21}$.
Note that by substituting the lower estimate for $d_{221}$
in the upper estimate for $d_{211}$ one obtains
$$
0 < d_{211} < 5 + \frac{6}{l_{21}} \leq 6.
$$
Thus we have a finite range (independent from $l_{21}$)
for $d_{211}$ and therefore for $d_{111}$ too, namely
$$
d_{111} \in \{0,1,2\}, \qquad\quad d_{111} < d_{211} \le 5.
$$
The cases $d_{111}=3,4$ are discharged, because there the origin
lies outside of the lineality part $A_{X,0}^c$.
Moreover, if $d_{111}=0$ holds,
then $d_{211}=1$ must hold because of terminality.
We look at the lattice polytope $C$ in $\QQ^3$
given as the convex hull of the following points:
$$
(l_{21},d_{121},d_{221}), \quad (-2,d_{111},d_{211}), \quad (-2,d_{111}+2,d_{211}),
\quad (-1,0,0), \quad (-1,1,0).
$$
Now, put the leaf $A_X^c \cap \tau_2$ of the 
anticanonical complex into $\QQ^3$ by removing the 
first coordinate (which always equals zero) 
from its points.
For $a = 0,-1,-2$, consider 
$$
H_{a}^+ \ := \{(x,y,z); \; x \ge a\} \ \subseteq \ \QQ^3,
\qquad
H_{a}^0 \ := \{(x,y,z); \; x = a\} \ \subseteq \ \QQ^3.
$$
Then $C \cap H_{0}^+$ equals $A_X^c \cap \tau_2$
and $H_0^0$ cuts out the lineality part $A_{X,0}^c$.
In particular, by terminality of $X$ and Theorem~\ref{thm:main},
the intersection $C \cap H_{0}^+$ has no interior lattice point
and inside $C \cap H_{0}^0$ the origin is the only lattice point.
Other interior lattice points of $C$ may only appear in $C\cap H_{-1}^0$.
For any given pair $(d_{111},d_{112})$ out of the finite set of possible pairs
we find a simplex $B\subseteq C$ containing exactly one interior lattice point
and bound its volume using~\cite[Thm.~2.2]{AvKrNi}.
This technique is the same as the one used in the proof of the 
previous Proposition.
This allows to bound $l_{21}$ according to the table of case (c).

Now we prove~(ii). By Lemma~\ref{lem:tower221bounds8} we have $l_{12}$=1,
therefore we can always achieve $d_{112}=d_{212}=0$ by admissible operations.
The same Lemma gives us $l_{11}=1$ if $l_{21} \ge 5$ or if $h(g_2)\le -4$.
This case is covered by Proposition~\ref{prop:l11-l12-1}.
Let us therefore assume $l_{21}\in\{3,4\}$ and $h(g_2) > -4$, 
together with $l_{11}>1$.
Then Lemma~\ref{lem:tower221bounds8} implies $l_{11}=2$.
Moreover Remark~\ref{rem:tower221bounds4} provides
estimates on $d_{211}$ and $d_{221}$ in terms of $l_{21}$.
The last bounds on $d_{111}$ and $d_{121}$ are obtained as 
in Remark~\ref{rem:tower221bounds0}.

Lastly we turn to~(iii). We have $l_{21} \ge 3$ and $h(g_1) \ge 1$.
Then by Lemma~\ref{lem:tower221bounds4} we must have $l_{11} = l_{12} = 1$.
Hence Proposition~\ref{prop:l11-l12-1} applies.
\end{proof}

Concerning the remaining cases of Lemma~\ref{lem:piconeconfigs},
one shows with arguments similar to those used for
Proposition~\ref{prop:higherplt}
that~(iii),~(v),~(vii) and~(viii) do not provide terminal varieties.
For the cases~(ii),~(iv) and~(vi) we state without proof 
the bounds we obtained.
The arguments are similar as in case~(i)
and are presented in full in~\cite[Section~2.4]{Ni}.

\begin{proposition}
\label{prop:311}
Let $X = X(A,P)$ be a non-toric terminal
$\QQ$-factorial Fano threefold
of Picard number one such that $P$ is irredundant and we have
$r=2$, $m=0$ and $n=5$, where $n_0=3$, $n_1=n_2=1$.
Then $l_{01}=l_{02}=l_{03}=1$ hold and
after suitable admissible operations the matrix $P$ 
is of the form
$$ 
P 
\ = \ 
\left[
\begin{array}{rrrrr}
-1 & -1 & -1 & l_{11} & 0
\\
 -1 & -1 & -1 & 0 & l_{21} 
\\
0 & 1 & 0 & d_{111} & d_{121}
\\
0 & 0 & 1 & d_{211} & d_{221} 
\end{array}
\right],
$$
where $l_{11} \ge l_{21}$ holds. In this setting, we have 
$2 \le l_{21} \le 5$ and we are left with the following 
situations: 
\begin{enumerate}
\item
We have $l_{21} = 2$.
Then we achieve $d_{121}=1$ by suitable admissible operations
and we are in one of the following two cases:
\begin{enumerate}
\item
$d_{221}=0$, $l_{11}\le 69$ hold and we have the estimates
$$
\qquad\quad  -\frac{l_{11}}{2}-1 < d_{211} < 0, 
\qquad  -l_{11} \le d_{111} < -\frac{l_{11}}{2},
$$
\item
$d_{221}=1$, $-35 \le d_{111} <0$ hold and we have the estimates
$$
\qquad\quad  d_{111} \le d_{211} < 0, 
\qquad  \max(2,-d_{111}) \le l_{11} < -2d_{211}.
$$
\end{enumerate}
\item
We have $l_{21} = 3$. Then we achieve $0\le d_{121} \le d_{221} < 3$
by suitable admissible operations and the value $l_{11}$ is bounded 
according to the table
\begin{center}
\begin{tabular}{r|r|r|r|r|r}
$d_{121}$ & $0$ & $0$ & $1$ & $1$ & $2$
\\
$d_{221}$ & $1$ & $2$ & $1$ & $2$ & $2$
\\
\hline
$l_{11}$ & $\le 71$ & $\le 211$ & $\le 103$ & $\le 211$ & $\le 69$
\end{tabular}
\end{center}
and for the remaining entries we obtain the estimates
$$
-\frac{l_{11}}{3}(d_{121}+1)-1 < d_{111} < -\frac{l_{11}}{3}d_{121},
$$
$$
-\frac{l_{11}}{3}(d_{221}+1)-1 < d_{211} < -\frac{l_{11}}{3}d_{221}.
$$
\item
We have $l_{21} = 4$ or $l_{21} = 5$. Then we have following estimates
$$
\qquad\qquad l_{11} < \frac{3l_{21}}{l_{21}-3},
\qquad\quad 0\le d_{121},d_{221}< l_{21},
$$
$$
\qquad\quad
-\frac{l_{11}d_{121}}{l_{21}} - l_{11} < d_{111} < -\frac{l_{11}d_{121}}{l_{21}},
\qquad
-\frac{l_{11}d_{221}}{l_{21}} - l_{11} < d_{211} < -\frac{l_{11}d_{221}}{l_{21}}.
$$
\end{enumerate}
\end{proposition}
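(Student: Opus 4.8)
The plan is to run the same machinery used for Proposition~\ref{prop:higherplt} and Propositions~\ref{prop:l11-l12-1}--\ref{prop:l21-ge3}, now adapted to the block structure $n_0=3$, $n_1=n_2=1$. First I would establish that $l_{01}=l_{02}=l_{03}=1$. Irredundancy of $P$ forces $l_{11},l_{21}\ge 2$, and for each $j\in\{1,2,3\}$ the rays $v_{0j},v_{11},v_{21}$ span an elementary big cone $\sigma_j$ whose triple $(l_{0j},l_{11},l_{21})$ is platonic by Corollary~\ref{cor:logterm2lbound}. If some $l_{0j}\ge 2$, this triple has all entries $\ge 2$ and hence equals one of $(l_{0j},2,2)$, $(3,3,2)$, $(4,3,2)$, $(5,3,2)$, for which $\ell_{\sigma_j}\in\{4,3,2,1\}$. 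Exactly as in Proposition~\ref{prop:higherplt} one checks that for the triples $(3,3,2),(4,3,2),(5,3,2)$ the corresponding vertex $v'_{\sigma_j}=\ell_{\sigma_j}^{-1}v_{\sigma_j}$ of the lineality part is a nonzero lattice point (each numerator being divisible by $\ell_{\sigma_j}$), while the remaining $(l_{0j},2,2)$-type configurations are ruled out by exhibiting a lattice point on the edge of a leaf joining $v'_{\sigma_j}$ to a column of $P$; either way terminality is contradicted via Theorem~\ref{thm:main}. Thus $l_{0j}=1$, normalizing the first three columns by admissible operations brings $P$ into the displayed shape, and swapping the $\tau_1$- and $\tau_2$-blocks arranges $l_{11}\ge l_{21}$.

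Next I would bound $l_{21}$. With $l_{0j}=1$ one finds $\ell_{\sigma_j}=l_{11}+l_{21}$ and, by Corollary~\ref{thm:cpl1antican}, that the three vertices $v'_{\sigma_1},v'_{\sigma_2},v'_{\sigma_3}$ of the lineality part $A_{X,0}^c$ satisfy $v'_{\sigma_2}-v'_{\sigma_1}=(a,0)$ and $v'_{\sigma_3}-v'_{\sigma_1}=(0,a)$ with $a:=l_{11}l_{21}/(l_{11}+l_{21})$; hence $A_{X,0}^c$ is a right isosceles triangle with both legs of length $a$ parallel to the coordinate axes. Terminality forces the origin to be its only lattice point. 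Inspecting the two slices through the origin parallel to the axes (each must avoid $\pm 1$) yields $a<3$, and since $l_{11}\ge l_{21}$ gives $a\ge l_{21}/2$, we conclude $l_{21}<6$, that is $2\le l_{21}\le 5$.

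The bulk of the work is then the case distinction $l_{21}\in\{2,3,4,5\}$. In each case I would describe, via Corollary~\ref{thm:cpl1antican}, the two leaves $A_X^c\sqcap\tau_1=\conv(v_{11},v'_{\sigma_1},v'_{\sigma_2},v'_{\sigma_3})$ and $A_X^c\sqcap\tau_2=\conv(v_{21},v'_{\sigma_1},v'_{\sigma_2},v'_{\sigma_3})$ explicitly, normalize the free entries $d_{ijk}$ using the admissible operations of Remark~\ref{remark:admissibleops} together with positivity of the weights, and then enclose the relevant leaf (projected to $\QQ^3$ by dropping the coordinate that vanishes identically on it) in a lattice simplex, or a union of two lattice simplices, whose only interior lattice point is the origin. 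Bounding the volume of such a simplex by $12$ through~\cite[Thm.~2.2]{AvKrNi} translates directly into the tabulated upper bounds on $l_{11}$ and the stated inequalities for the $d_{ijk}$.

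I expect the main obstacle to be precisely this final case analysis: for each admissible configuration one must construct a lattice simplex that simultaneously contains the leaf and has exactly one interior lattice point, and the second property requires carefully excluding spurious lattice points in the intermediate slice between the apex and the lineality part (controlled by the heights of the two edges of the triangle). As in the proof of Proposition~\ref{prop:l21-2}, one also has to separate the generic subcase, where the origin is interior to a single enclosing simplex and the volume bound applies at once, from the degenerate subcase where it lies in the common face of two such simplices; the latter must be discharged by an extra primitivity (\emph{gcd}) argument on the entries of $v_{11}$, as is done there.
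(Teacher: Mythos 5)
You should first be aware that the paper contains no proof of this proposition: it is case~(iv) of Lemma~\ref{lem:piconeconfigs}, and the authors explicitly ``state without proof the bounds we obtained,'' deferring the arguments to a later publication. So your attempt can only be measured against the paper's general machinery, which you do follow faithfully. The parts you actually execute are correct. The three cones $\cone(v_{0j},v_{11},v_{21})$ are indeed elementary big cones of $\Sigma$ (this uses Picard number one: all weights are positive, so every proper subset of columns spans a cone of $\Sigma_c$, and $v_{\sigma_j}\in\relint(\sigma_j)\cap\lambda$ puts each such cone into $\Sigma$ --- worth stating, since you assert it silently); the integrality argument killing $(3,3,2)$, $(4,3,2)$, $(5,3,2)$ transfers verbatim from Proposition~\ref{prop:higherplt}; and your observation that for $l_{0j}\equiv 1$ the lineality part is a right isosceles triangle with axis-parallel legs of length $a=l_{11}l_{21}/(l_{11}+l_{21})$, whence $a<3$ and $2\le l_{21}\le 5$, is a clean and correct derivation of the preamble. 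One step you assert without justification is the normalization of the first three columns to $d$-parts $0$, $e_1$, $e_2$: the admissible row operations of type~(iii) can only translate all three $d$-parts by a common vector, so after zeroing the first one you must still reduce the remaining $2\times 2$ integer block to the identity by operations of type~(iv), which requires its determinant to be $\pm 1$. This is true, but needs an argument: positivity of the weights rules out collinear $d$-parts, and then terminality forces the triangle $\conv(v_{01},v_{02},v_{03})$ (which lies in the leaf $A_X^c\sqcap\tau_0$, at height $x=y=-1$) to be an empty lattice triangle, hence unimodular by Pick's theorem.

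The genuine gap is that the enumerated assertions (i)--(iii) --- which are the actual content of the proposition --- are never established. The case splits on $(d_{121},d_{221})$, the tables ($l_{11}\le 69$, $\le 71$, $\le 103$, $\le 211$, \dots), and all the inequalities for the $d_{ijk}$ are precisely the output of the simplex-plus-volume technique, and obtaining them requires, for each of finitely many configurations: fixing a normal form of $(d_{121},d_{221})$ by admissible operations, constructing an enclosing lattice simplex of the relevant leaf, excluding spurious interior lattice points in the intermediate slices (this is where hypotheses like $h(g_1)<1$, $h(g_2)>-1$ enter in the paper's case-(i) proofs), handling the degenerate position of the origin on a common face of two simplices by a gcd argument, and finally converting $\mathrm{vol}\le 12$ from~\cite[Thm.~2.2]{AvKrNi} into the stated bound. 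Your text does none of this: it constructs no simplex and performs no computation, so nothing in it produces, or could even be checked against, the specific constants $69$, $71$, $103$, $211$. What you have is a correct proof of the displayed form of $P$, of $l_{01}=l_{02}=l_{03}=1$, and of $2\le l_{21}\le 5$, together with a credible but entirely unexecuted programme for the rest --- and the rest is where essentially all of the work in this proposition lies.
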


\begin{proposition}
\label{prop:2211}
Let $X = X(A,P)$ be a non-toric terminal
$\QQ$-factorial Fano threefold
of Picard number one such that $P$ is irredundant and we have
$r=3$, $m=0$ and $n=6$, where $n_0=n_1=2$, $n_2=n_3=1$.
Then $l_{01}=l_{02}=l_{11}=l_{12}=1$ hold and
after suitable admissible operations the matrix $P$ is of the form
$$ 
P 
\ = \ 
\left[
\begin{array}{rrrrrr}
-1 & -1 & 1 & 1 & 0 & 0
\\
 -1 & -1 & 0 & 0 & l_{21} & 0
\\
 -1 & -1 & 0 & 0 & 0 & l_{31} 
\\
0 & 1 & d_{111} & 0 & d_{121} & d_{131}
\\
0 & 0 & d_{211} & 0 & d_{221} & d_{231}
\end{array}
\right],
$$
such that $l_{21} \ge l_{31}$ holds. 
In this setting $l_{31}=2,3$ holds
and we are left with the following situations:
\begin{enumerate}
\item
We have $l_{31} = 2$.
Then we have $d_{211}=1$ and we can achieve $d_{111}=0$ 
by a suitable admissible operation.
The other entries of $P$ are then bounded according 
to the table
\begin{center}
\begin{tabular}{r|r|r|r}
$d_{131}$ & $0$ & $1$ & $1$
\\
$d_{231}$ & $1$ & $0$ & $1$
\\
\hline
$l_{21}$ & $\le 33$ & $\le 141$ & $\le 69$
\end{tabular}
\end{center}
and the estimates
$$
-\frac{l_{21}}{2}(d_{231}+1)-1 < d_{221} < -\frac{l_{21}}{2}d_{231},
$$
$$
-\frac{d_{221}}{l_{21}}-\frac{d_{231}}{2} < d_{211} <
-\frac{d_{221}}{l_{21}}-\frac{d_{231}}{2}+ \frac{2+l_{21}}{2l_{21}}.
$$
\item
We have $l_{31} = 3$. Then $3 \le l_{21} \le 5$ holds,
we obtain $0\le d_{131},d_{231} < 3$ and we have the estimates
$$
-\frac{l_{21}}{3}(d_{231}+1)-1 < d_{221} < -\frac{l_{21}}{3}d_{231},
$$
$$
-\frac{d_{221}}{l_{21}}-\frac{d_{231}}{3} < d_{211} <
-\frac{d_{221}}{l_{21}}-\frac{d_{231}}{3}+ \frac{3+l_{21}}{3l_{21}},
$$
$$
0 \le d_{111} < d_{211}l_{31}
$$
$$
\frac{d_{111}(l_{21}d_{231}+3d_{221})-l_{21}d_{211}d_{131}}{3d_{211}}-l_{21} < d_{121} <
\frac{d_{111}(l_{21}d_{231}+3d_{221})-l_{21}d_{211}d_{131}}{3d_{211}}.
$$
\end{enumerate}
\end{proposition}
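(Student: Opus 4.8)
The plan is to run, step for step, the machinery that settled case~(i) of Lemma~\ref{lem:piconeconfigs} in Setting~\ref{set:tower221bounds} and Propositions~\ref{prop:higherplt}--\ref{prop:l21-ge3}, adapting every computation to the combinatorics $n_0=n_1=2$, $n_2=n_3=1$, $r=3$. First I would normalise by admissible operations to $l_{01}\ge l_{02}$, $l_{11}\ge l_{12}$, $l_{21}\ge l_{31}$, where irredundancy of the two single blocks forces $l_{21},l_{31}\ge 2$. Since $X$ has Picard number one, Corollary~\ref{cor:boundrel} guarantees an elementary big cone $\sigma$, and for \emph{any} choice of one ray from each of the four blocks the associated numbers $(l_{0a},l_{1b},l_{21},l_{31})$ must, by Corollary~\ref{cor:logterm2lbound}, have smallest entry equal to $1$. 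As $l_{21},l_{31}\ge 2$, this means $\min(l_{0a},l_{1b})=1$ for all $a,b\in\{1,2\}$, which already forces at least one of the two double blocks to consist entirely of ones.

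To upgrade this to the full normal form $l_{01}=l_{02}=l_{11}=l_{12}=1$, I would copy the argument of Proposition~\ref{prop:higherplt}. Assuming, say, block~$1$ is all ones but $l_{01}\ge 2$, the cone spanned by $v_{01},v_{1b},v_{21},v_{31}$ carries the genuine platonic triple $(l_{01},l_{21},l_{31})$, so by Corollary~\ref{cor:logterm2lbound} it is one of $(a,2,2),(3,3,2),(4,3,2),(5,3,2)$. For each such triple I would compute, via Corollary~\ref{thm:cpl1antican}, the corresponding vertex $v'_\sigma$ of the lineality part $A_{X,0}^c$; its coordinates have common denominator $\ell_\sigma$, and exactly as in Proposition~\ref{prop:higherplt} one checks that whenever $\ell_\sigma\in\{1,2,3\}$ the numerators are divisible by $\ell_\sigma$, so $v'_\sigma$ is a lattice point $\neq 0$ of $A_X^c$, contradicting terminality through Theorem~\ref{thm:main}. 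The surviving configurations are eliminated by producing a lattice point on a leaf edge joining a (half-)integral vertex of $A_{X,0}^c$ to a column of $P$, just as at the end of the proof of Proposition~\ref{prop:higherplt}. This yields $l_{01}=l_{02}=l_{11}=l_{12}=1$ and the displayed shape of $P$.

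With the normal form in place the lineality part is determined by the four elementary big cones $\sigma_{ab}$, $a\in\{01,02\}$, $b\in\{11,12\}$. A direct computation from Definition~\ref{def:ellsigma} gives $\ell_{\sigma_{ab}}=l_{21}+l_{31}$ in each case and shows, via Corollary~\ref{thm:cpl1antican}, that $A_{X,0}^c$ is the \emph{parallelogram} spanned at $u_1=(l_{21}+l_{31})^{-1}\bigl(l_{31}(d_{121},d_{221})+l_{21}(d_{131},d_{231})\bigr)$ by the two edge vectors $\tfrac{l_{21}l_{31}}{l_{21}+l_{31}}(1,0)$ and $\tfrac{l_{21}l_{31}}{l_{21}+l_{31}}(d_{111},d_{211})$. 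The analogues of Lemmas~\ref{lem:tower221bounds1}--\ref{lem:tower221bounds8} now read off from this parallelogram: by terminality the origin is its only lattice point and lies in its interior, so the horizontal cross-section through $0$, which has the full edge length $W:=\tfrac{l_{21}l_{31}}{l_{21}+l_{31}}$, contains no further lattice point, forcing $W<2$. Together with $l_{21}\ge l_{31}\ge 2$ this gives $l_{31}\in\{2,3\}$, and in the case $l_{31}=3$ even $l_{21}<6$, i.e.\ $3\le l_{21}\le 5$ as claimed; positivity of the weights $w_{ij}=\pm\det(P_{ij})$ (Setting~\ref{set:tower221bounds}) then produces the stated inequalities for $d_{221},d_{211},d_{111},d_{121}$.

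It remains to bound $l_{21}$ in the case $l_{31}=2$, where the width estimate is vacuous: this is the source of the two tables. Here I would pass, as in Propositions~\ref{prop:l21-2} and~\ref{prop:l21-ge3}, to the three-dimensional leaf $A_X^c\cap\tau_2$. For each admissible pair $(d_{131},d_{231})$ I would construct an explicit lattice polytope $C$ whose vertices are $v_{21}$, the leaf columns of $P$, and the vertices of $A_{X,0}^c$, arrange through the height conditions $h(g_1)<1$, $h(g_2)\le -c$ (notation of Remark~\ref{rem:trapez}) that $C$ meets only the origin in its interior, and then invoke the volume bound $\mathrm{vol}(C)\le 12$ of~\cite[Thm.~2.2]{AvKrNi}; since $6\,\mathrm{vol}(C)$ is a simple polynomial in $l_{21}$ and $d_{211},d_{231}$, this yields the tabulated upper bounds $l_{21}\le 33,141,69$. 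I expect the main obstacle to be, exactly as in case~(i), the bookkeeping of parity and half-integrality: when a vertex of $A_{X,0}^c$ is only half-integral one must first peel off by hand the finitely many lattice points that $C$ acquires in the slab $\{-1\le x\le 0\}$ (compare the treatment of the pair $(1,-4)$ in Proposition~\ref{prop:l21-2}) and check that the origin does not lie on the common facet of the two sub-simplices before applying the volume bound. This case distinction is routine but lengthy, which is why, as remarked after Proposition~\ref{prop:l21-ge3}, its full execution is deferred elsewhere.
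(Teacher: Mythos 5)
First, an important caveat: the paper contains \emph{no} proof of this proposition. It is one of the cases (ii), (iv), (vi) of Lemma~\ref{lem:piconeconfigs} for which the authors explicitly ``state without proof the bounds we obtained,'' deferring the arguments elsewhere. So there is no internal proof to compare against, and your plan can only be judged on its own terms as an adaptation of the case-(i) machinery. On that score the skeleton is sound, and the parts that can be checked do check out: every choice of one column per block spans an elementary big cone of $\Sigma$ (this follows from the Picard-number-one description of $\Sigma_c$, as used in the proof of Lemma~\ref{lem:piconeconfigs}, not from Corollary~\ref{cor:boundrel}, which \emph{assumes} rather than produces a big cone); the platonic-triple and divisibility argument does force $l_{01}=l_{02}=l_{11}=l_{12}=1$ (for $r=3$ the coefficients $l_{21}l_{31}$, $l_{01}l_{21}l_{31}$, $l_{01}l_{31}$, $l_{01}l_{21}$ of the numerators of $v'_\sigma$ are all divisible by $\ell_\sigma$ for the triples $(3,3,2)$, $(4,3,2)$, $(5,3,2)$, and $v_\sigma\ne 0$ since $\sigma$ is pointed); the lineality part is exactly the parallelogram you describe, with $\ell_{\sigma_{ab}}=l_{21}+l_{31}$; and the width bound $l_{21}l_{31}/(l_{21}+l_{31})<2$ gives precisely $l_{31}\in\{2,3\}$, with $3\le l_{21}\le 5$ when $l_{31}=3$.

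The genuine gap is that the proposal never derives the quantitative content the proposition actually asserts. The bounds $l_{21}\le 33,141,69$ in case (i), the value $d_{211}=1$, and the displayed $d$-estimates are exactly the output of the computations you defer as ``routine but lengthy''; saying that suitable simplices $C$ with $\mathrm{vol}(C)\le 12$ ``yield the tabulated bounds'' does not establish those numbers, and since the paper omits the computation as well, nothing confirms them. A smaller but real inaccuracy: you attribute the estimates on $d_{221}$, $d_{211}$ to positivity of the weights, whereas only half of each two-sided bound has that origin. For instance, in case (ii) the bound on $d_{221}$ says that the lower horizontal edge of the parallelogram has height in $(-1,0)$: the upper inequality does follow from weight positivity (row five of $Pw=0$ gives $d_{221}/l_{21}+d_{231}/l_{31}<0$ once $d_{211}>0$ and $w_{11}>0$), but the lower inequality $-1<\text{height}$ requires terminality --- a horizontal slice of length $\ge 3/2$ at integer height $-1$ would contain a nonzero lattice point of $A_{X,0}^c$. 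Your framework contains both tools, but the write-up conflates them, and together with the unproven tables this leaves the proposal a correct and well-aligned \emph{plan} rather than a proof of the stated bounds.
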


\begin{proposition}
\label{prop:211}
Let $X = X(A,P)$ be a non-toric terminal
$\QQ$-factorial Fano threefold
of Picard number one such that $P$ is irredundant and we have
$r=2$, $m=1$ and $n=4$, where $n_0=2$, $n_1=n_2=1$.
Then $l_{01}=l_{02}=1$ hold and after suitable admissible 
operations the matrix $P$ is of the form
$$ 
P 
\ = \ 
\left[
\begin{array}{rrrrr}
-1 & -1 & l_{11} & 0 & 0
\\
 -1 & -1 & 0 & l_{21}  & 0
\\
0 & 1 & d_{111} & d_{121} & d_{11}'
\\
0 & 0 & d_{211} & d_{221} & d_{21}'
\end{array}
\right],
$$
where $2 \le l_{21}\le l_{11}$, $0\le d_{121},d_{221} <l_{21}$ and $0\le d_{11}' < d_{21}'$ hold.
In this situation, one has the estimates
$$
-\frac{l_{11}}{l_{21}}d_{121} -l_{21}
+\frac{d_{11}'}{d_{21}'}\Bigl(d_{211}+\frac{l_{11}}{l_{21}}d_{221}\Bigr)
\ < \ d_{111} \ < \ -\frac{l_{11}}{l_{21}}d_{121},
$$
$$
-\frac{l_{11}}{l_{21}}(d_{221}+2)-2  \ < \ d_{211} \ < \ -\frac{l_{11}}{l_{21}}d_{221}.
$$
Moreover, we are in one of the following situations:
\begin{enumerate}
\item
we have $d_{21}'=1$. Then $d_{11}'=0$ and $l_{21} \le 7 $ hold
and $l_{11}$ is bounded according to the table
\begin{center}
\begin{tabular}{r|r|r|r|r|r|r}
$l_{21}$ & $2$ & $3$ & $4$ & $5$ & $6$ & $7$
\\
\hline
$l_{11}$ & $\le 51$ & $\le 105$ & $\le 11$ & $\le 19$ & $\le 11$ & $\le 9$
\end{tabular}  .
\end{center}
\item
we have $d_{21}'>1$. Then $d_{11}'>0$ and $l_{21}\le5$ hold and 
we are in one of the following subcases:
\begin{enumerate}
\item
$l_{21}=2$, $l_{11}\le69$, $d_{21}'=2,\ldots,10$ and $d_{11}'\in\{1,d_{21}'-1\}$.
\item
$l_{21}=3$, $(d_{11}',d_{21}') \in \{(1,2),(1,3),(2,3),(3,4)\}$
and the exponent $l_{11}$ is bounded according to the table:
\begin{center}
\begin{tabular}{r|r|r|r|r}
$(d_{11}',d_{21}')$ & $(1,2)$ & $(1,3)$ & $(2,3)$ & $(3,4)$
\\
\hline
$l_{11}$ & $\le 14$ & $\le 4$ & $\le 4$ & $= 3$
\end{tabular}  .
\end{center}
\item
$l_{21}=4,5$, $l_{11}\le 11$, $(d_{11}',d_{21}') = (1,2)$.
\end{enumerate}
\end{enumerate}
\end{proposition}

\begin{remark}
Propositions~\ref{prop:l11-l12-1} to~\ref{prop:211} 
give us effective bounds on the entries of the defining matrices 
$P$ for the terminal $\QQ$-factorial
Fano threefolds $X = X(A,P)$ with effective 
two-torus action and Picard number $\varrho(X) = 1$.
In order to prove Theorem~\ref{thm:classif}
one still has to figure out the terminal ones among
all candidates $X = X(A,P)$, where $P$ fulfills these bounds.
This means to check Condition~\ref{thm:main}~(v);
we do it by computer, using~\cite{MDS} where the anticanonical
complex $A_X^c$ is implemented.
Using the explicit knowledge of canonical Fano 
$3$-topes provided by Kasprzyk's classification~\cite{Ka2}, 
one can reduce the number of testing cases and 
obtains more specific bounds in the remaining cases.
\end{remark}

\begin{remark}
If one adds the assumption ``$\Cl(X)$ finitely generated''
in Theorem~\ref{thm:classif}, then, without further changes,
all the results and proofs of the paper are valid over any 
algebraically closed field of characteristic zero.
\end{remark}



\begin{thebibliography}{}%
%
\bibitem{ArDeHaLa} 
I.~Arzhantsev, U.~Derenthal, J.~Hausen, A.~Laface:
\emph{Cox rings}. 
Cambridge Studies in Advanced Mathematics no. 144,
Cambridge Univ. Press, Cambridge, 2014.
%
\bibitem{AvKrNi}
G.~Averkov, J.~Kr\"umpelmann, B.~Nill:
\emph{Largest integral simplices with one interior integral point:
Solution of Hensley's conjecture and related results}.
Adv. Math. 274 (2015), 118--166. 
%
\bibitem{BoBo}
A. A. Borisov; L. A. Borisov:
\emph{Singular toric Fano varieties}.
Russ. Acad. Sci. Sb. Math. Vol. 75 (1993), no. 1, 277--283.
%
\bibitem{Ha2}
J.~Hausen:
\emph{Cox rings and combinatorics II}.
Mosc. Math. J. 8 (2008), no.~4,  711--757.
%
\bibitem{HaHe}
J.~Hausen, E.~Herppich:
\emph{Factorially graded rings of complexity one}. 
Torsors, \'etale homotopy and applications to rational 
points, 414–428, 
London Math. Soc. Lecture Note Ser., 405, 
Cambridge Univ. Press, Cambridge, 2013.
%
\bibitem{MDS}
J.~Hausen, S.~Keicher:
\emph{A software package for Mori Dream Spaces}.
LMS J. Comput. Math. 18 (2015), no. 1, 647--659.
%
\bibitem{HaSu}
J.~Hausen, H.~S\"u{\ss}:
\emph{The Cox ring of an algebraic variety with torus action}.
Advances Math. 225 (2010), 977--1012.
%
\bibitem{Hug}
E.~Huggenberger: 
\emph{Fano varieties with torus action of complexity one}.
PhD Thesis. Universit\"at T\"ubingen, 2013.
%
\bibitem{Is1}
V.A.~Iskovskih:
\emph{Fano threefolds. I}.
Izv. Akad. Nauk SSSR Ser. Mat. 41 (1977), no. 3, 516--562.
%
\bibitem{Is2}
V.A.~Iskovskih:
\emph{Fano threefolds. II}.
Izv. Akad. Nauk SSSR Ser. Mat. 42 (1978), no. 3, 506--549.
%
\bibitem{IsPr}
V.A.~Iskovskikh, Y.G.~Prokhorov. 
\emph{Fano varieties}. Algebraic Geometry V., Volume 47 
of Encyclopaedia Math. Sci. Springer, Berlin, 1999.
%
\bibitem{Ka}
A.M.~Kasprzyk:
\emph{Toric Fano three-folds with terminal singularities}.
Tohoku Math. J. (2) 58 (2006), no. 1, 101--121.
%
\bibitem{Ka2}
A.M.~Kasprzyk:
\emph{Canonical toric Fano threefolds}.
Canad. J. Math. 62 (2010), no. 6, 1293--1309. 
%
\bibitem{LiSu}
A.~Liendo,  H.~S\"u{\ss}:
\emph{Normal singularities with torus actions}. 
Tohoku Math. J. (2) 65 (2013), no. 1, 105--130. 
%
\bibitem{MoMu}
S.~Mori, S.~Mukai:
\emph{Classification of Fano 3-folds with $b_2 \ge 2$}.
Manuscripta Math. 36 (1981/82), no. 2, 147--162. 
%
\bibitem{Ni} 
M.~Nicolussi,
\emph{On terminal Fano varieties with a torus action of complexity 
one}. 
PhD Thesis. Universit\"at T\"ubingen 2017, 
{\tt{http://hdl.handle.net/10900/77235}}.
%
\bibitem{Tev}
J. Tevelev:
\emph{Compactifications of subvarieties of tori}.
Amer. J. Math. 129 (2007), no. 4,  1087--1104.
%
\end{thebibliography}
\end{document}